\newtheorem{assumption}{Assumption}
\definecolor{lred}{rgb}{1,0.8,0.8}
\definecolor{lblue}{rgb}{0.8,0.8,1}
\definecolor{dred}{rgb}{0.6,0,0}
\definecolor{dblue}{rgb}{0,0,0.5}
\definecolor{dgreen}{rgb}{0,0.5,0.5}
\begin{document}
	
	\title{{A VMiPG method for composite optimization with nonsmooth term having no closed-form proximal mapping}
		\thanks{This work is funded by the National Natural Science Foundation of China under project No.12371299 and No.11971177.}}
	
	\author{Taiwei Zhang \and   Shaohua Pan   \and
		Ruyu Liu  
	}

	
	\institute{Taiwei Zhang \at
		School of Mathematics, South China University of Technology, Guangzhou.            
		\email{maztwmath@mail.scut.edu.cn}
		\and
		Shaohua Pan \at
		School of Mathematics, South China University of Technology, Guangzhou.
		\email{shhpan@scut.edu.cn}
		\and
		Ruyu Liu (Corresponding author) \at
		School of Mathematics, South China University of Technology, Guangzhou.
		\email{maruyuliu@mail.scut.edu.cn}
	}
	
	\date{Received: date / Accepted: date}

	\maketitle
	
\begin{abstract}\label{abstract}
	
 This paper concerns the
minimization of the sum of a twice continuously differentiable function $f$ and a nonsmooth convex function $g$ without closed-form proximal mapping. For this class of nonconvex and nonsmooth problems, we propose a line-search based variable metric inexact proximal gradient (VMiPG) method with uniformly bounded positive definite variable metric linear operators. This method computes in each step an inexact minimizer of a strongly convex model such that the difference between its objective value and the optimal value is controlled by its squared distance from the current iterate, and then seeks an appropriate step-size along the obtained direction with an armijo line-search criterion. We prove that the iterate sequence converges to a stationary point when $f$ and $g$ are definable in the same o-minimal structure over the real field $(\mathbb{R},+,\cdot)$, and if addition the objective function $f+g$ is a KL function of exponent $1/2$, the convergence has a local R-linear rate. The proposed VMiPG method with the variable metric linear operator constructed by the Hessian of the function $f$ is applied to the scenario that $f$ and $g$ have common composite structure, and numerical comparison with a state-of-art variable metric line-search algorithm indicates that the Hessian-based VMiPG method has a remarkable advantage in terms of the quality of objective values and the running time for those difficult problems such as high-dimensional fused weighted-lasso regressions.
\end{abstract} 	

\medskip
\noindent
{\bf\large Keywords:} Nonconvex and nonsmooth composite optimization; variable metric inexact PG method; global convergence; linear convergence rate; KL property

\section{Introduction}\label{sec1.0}

Let $\mathbb{X}$ represent a finite dimensional real vector space endowed with the inner product $\langle\cdot,\cdot\rangle$ and its induced norm $\|\cdot\|$, and let $\overline{\mathbb{R}}\!:=(-\infty,\infty]$ denote the extended real number set. We are interested in the nonconvex and nonsmooth composite optimization problem
\begin{equation}\label{prob}
	\min_{x\in\mathbb{X}}F(x):=f(x)+g(x),
\end{equation}
where $f,g\!:\mathbb{X}\to\overline{\mathbb{R}}$ are the proper functions satisfying the following basic assumption
\begin{assumption}\label{ass0}
 \begin{description}
 \item[(i)] $g$ is an lsc convex function that is continuous relative to its domain ${\rm dom}\,g$; 
		
 \item[(ii)] $f$ is a lower semicontinuous (lsc) function that is twice continuously differentiable on an open set $\mathcal{O}\supset{\rm cl}\,({\rm dom}\,g)$;
		
 \item[(iii)] the function $F$ is lower bounded, i.e., $\inf_{x\in\mathbb{X}}F(x)>-\infty$.
 \end{description} 
\end{assumption}

Model \eqref{prob} covers the case that $g$ is weakly convex, and allows $g$ to be an indicator function of a closed convex set by Assumption \ref{ass0} (i). Such a problem frequently appears in image processing \cite{Bertero18,Chambolle16,Bonettini17}, statistics \cite{HastieTW15} and machine learning \cite{Bottou18}, where the smooth function $f$ usually represents a data fidelity term, and the nonsmooth function $g$ encodes some apriori information on the ground truth. This work concerns the scenario that $g$ has no closed-form proximal mapping. Indeed, many image processing models involve such a nonsmooth term, for example, the total-variation (TV) functionals to keep more edge details of imaging data \cite{Bonettini17,Bonettini18} and the TV functional plus the indicator of non-negative cone \cite{Bonettini20}. In statistics and machine learning, if the ground truth has a composite structure \cite{Oymak14,Beer19}, the used nonsmooth regularizer $g$ often belongs to this class.
\subsection{Related work}\label{sec1.1}

As the proximal mapping of $g$ does not have a closed form, the exact proximal gradient (PG) method and its accelerated or line-search variants are inapplicable to problem \eqref{prob}, and the inexact PG method becomes a natural candidate. Let $x^k$ be the current iterate. The inexact PG method computes an inexact minimizer of the proximal subproblem
\begin{equation*}
	y^k\approx\mathop{\arg\min}_{x\in\mathbb{X}} P_k(x):=f(x^k)+\langle\nabla\!f(x^k),x-\!x^k\rangle+\frac{1}{2\alpha_k}\|x-x^k\|^2+g(x),
\end{equation*}
and sets $y^k$ as the next iterate, where $\alpha_k\!>0$ is the step-size depending on the Lipschitz modulus of $\nabla\!f$; while its inertial variant treats the extrapolation $y^k\!+\!\beta_k(y^k\!-\!x^k)$ as the next iterate, where $\beta_k>0$ is the extrapolation parameter. For the convex and L-smooth (Lipschitz continuously differentiable) $f$, Schmidt et al. \cite{Schmidt11} proposed an inexact PG method by controlling the inexactness of $y^k$ in terms of the error in the proximal objective value, and established the $O(\frac{1}{k})$ convergence rate of the objective values by requiring that the  error sequence $\{\varepsilon_k\}$ is square-root summable, and the $O(\frac{1}{k^2})$ convergence rate of the objective values for its inertial variant by requiring that $\{k\sqrt{\varepsilon_k}\}$ is summable; and at almost the same time, Villa et al. \cite{Villa13} gave an analysis of a general inertial inexact PG method, and achieved the $O(\frac{1}{k^2})$ convergence rate of the objective values for a class of specific errors satisfying a sufficiently fast decay condition. For the nonconvex $f$, it is not an easy task to achieve the convergence of the iterate sequence for an inexact PG method due to the error in the solution of proximal subproblems. Frankel et al. \cite{Frankel15} provided a framework, which can be viewed as an inexact version of the one owing to Attouch et al. \cite{Attouch13}, for achieving the convergence of descent methods. The iterate sequence in \cite{Frankel15} complies with the proposed framework, but the involved inexactness criterion is inconvenient to implement especially for the case that $g$ has no closed-form proximal mapping. Recently, Bonettini et al. \cite{Bonettini23} introduced a novel abstract convergence framework and applied it to inertial inexact PG methods.  

The efficiency of inexact PG methods, like their exact versions, depends on the Lipschitz constant of $\nabla\!f$. When this constant is relatively large, the inexact PG method faces a challenge due to a potential small step-size. In this case, allowing for variable metric can improve convergence rates or compensate the effect of ill conditioning. Early variable metric PG methods were tailored in an exact version for (maximal) monotone inclusions \cite{Chen97,Combettes14}. For problem \eqref{prob} with a convex $f$, exact variable metric PG methods were investigated in \cite{Tran-Dinh15,Salzo17} without the standard Lipschitz assumption on $\nabla\!f$. From the view of computation, the variable metric inexact proximal gradient (VMiPG) method is a preferred one. This class of methods selects or constructs a positive definite linear operator $\mathcal{G}_k\!:\mathbb{X}\to\mathbb{X}$ (i.e., $\mathcal{G}_k$ is self-adjoint and all of its eigenvalues are positive) at the $k$-th iteration, computes an inexact minimizer of the approximation model
\begin{equation}\label{subprobx}  y^k\approx\mathop{\arg\min}_{x\in\mathbb{X}}\Theta_k(x)\!:=f(x^k)+\langle\nabla\!f(x^k),x-\!x^k\rangle+\frac{1}{2}(x\!-x^k)^{\top}\mathcal{G}_k(x\!-\!x^k)+g(x), 
\end{equation}
and then lets $y^k$ serve as the new iterate; while its line-search variants will seek a step-size $\alpha_k>0$ along the direction $d^k\!:=y^k-x^k$ by a certain criterion and adopts $x^k+\alpha_kd^k$ as the new iterate. Obviously, the choice of linear operator $\mathcal{G}_k$ as well as the inexactness criterion for $y^k$ determine the type of a VMiPG method. When $\mathcal{G}_k$ is a positive multiple of the identity mapping, the VMiPG method reduces to an inexact PG method; when $\mathcal{G}_k$ takes a suitable approximation to the Hessian $\nabla^2\!f(x^k)$ of $f$ at $x^k$, the VMiPG method is specified as a proximal Newton-type method. As this work focuses on the more general VMiPG method, here we do not review proximal Newton-type methods, and the interested reader is referred to the introduction of the recent work \cite{LiuPan23}. In the sequel, unless otherwise stated, $\overline{x}^k$ denotes the unique minimizer of subproblem \eqref{subprobx}.

For problem \eqref{prob} with a convex $f$, Bonettini et al. \cite{Bonettini16} proposed a line-search based VMiPG method by employing the following inexactness criterion for $y^k$ and line-search criterion for $\alpha_k$: 
\begin{subnumcases}{}\label{Binexact1}
\Theta_k(y^k)<\Theta_k(x^k)\ \ {\rm and}\ \	\mathcal{G}_k(x^k\!-\!\mathcal{G}_k^{-1}\nabla\!f(x^k)-y^k)\in\partial_{\varepsilon_k}g(y^k), \\
\label{Bline-search1}
 F(x^k\!+\!\alpha_kd^k)\!\le F(x^k)+\sigma\alpha_k[\Theta_k(y^k)\!-\Theta_k(x^k)]\ \ {\rm with}\ \ \sigma\in(0,1),
\end{subnumcases}
where $\partial_{\varepsilon_k}g(y^k)=\{v\in\mathbb{X}\,|\,g(z)\ge g(y^k)+\langle v,z-y^k\rangle-\varepsilon_k\  \forall z\in\mathbb{X}\}$ denotes the $\varepsilon_k$-subdifferential of $g$ at $y^k$, and proved that the iterate sequence generated with a sequence of uniformly bounded positive definite operators $\{\mathcal{G}_k\}_{k\in\mathbb{N}}$ under criteria \eqref{Binexact1}-\eqref{Bline-search1} converges to an optimal solution if $\mathcal{G}_{k+1}\preceq(1\!+\zeta_k)\mathcal{G}_k$ for a non-negative summable sequence $\{\zeta_k\}_{k\in\mathbb{N}}$ and either $\{\varepsilon_k\}_{k\in\mathbb{N}}$ is a non-negative summable sequence or $\varepsilon_k\leq\tau[\Theta_k(x^k)-\Theta_k(y^k)]$ for some $\tau>0$, and achieved the $O(\frac{1}{k})$ convergence rate for the objective values if in addition $\nabla\!f$ is Lipschitz continuous on ${\rm dom}\,g$; Bonettini et al. \cite{Bonettini18} developed an inertial VMiPG method with a sequence of uniformly bounded positive definite operators $\{\mathcal{G}_k\}_{k\in\mathbb{N}}$ in terms of the inexactness criterion $\Theta_k(y^k)\le\Theta_k(\overline{x}^k)+\varepsilon_k$, and established the $o(1/k^2)$ convergence rate for the objective values and the convergence of the iterate sequence under a suitable sequence $\{\varepsilon_k\}_{k\in\mathbb{N}}$ and the Lipschitz continuity of $\nabla\!f$ on ${\rm dom}\,g$; and Tran-Dinh et al. \cite{Tran-Dinh22} proposed a homotopy proximal variable metric scheme by a new parameterization for the optimality condition of \eqref{prob}, and showed that this scheme has a global linear convergence rate under the strong convexity and Lipschitz gradient assumption on $f$ and the Lipschitz continuity of the regularizer $g$. 

The VMiPG method of \cite{Bonettini16} is also applicable to problem \eqref{prob}, and the authors there proved that any cluster point of the iterate sequence generated with a sequence of uniformly bounded positive definite operators $\{\mathcal{G}_k\}_{k\in\mathbb{N}}$ is a stationary point. For problem \eqref{prob}, Chouzenoux et al. \cite{Chouzenoux14} proposed a VMiPG method with the inexactness criterion and the new iterate as follows:
\begin{subnumcases}{}\label{Cinexact}
 \Theta_k(y^k)\le\Theta_k(x^k)\ \ {\rm and}\ \ {\rm dist}(0,\partial(\ell_k+g)(y^k))\le \tau\|d^k\|_{\gamma_k\mathcal{G}_k},\\
 \label{Cline-search}
	x^{k+1}=(1-\lambda_k)x^k+\lambda_ky^k\ \ {\rm with}\ \ 0<\underline{\lambda}\le\lambda_k\le 1,
\end{subnumcases}
where $\ell_k(\cdot)$ denotes the linear approximation of $f$ at $x^k$ and $\{\gamma_k\}_{k\in\mathbb{N}}$ is a real number sequence satisfying $0<\underline{\eta}\le\gamma_k\lambda_k\le 2-\overline{\eta}$, and achieved the convergence of the iterate sequence generated with a sequence of uniformly bounded positive definite operators $\{\mathcal{G}_k\}_{k\in\mathbb{N}}$ under the Lipschitz continuity of $\nabla\!f$ on ${\rm dom}\,g$ and the KL property of $F$ with exponent $\theta\in[0,1)$; Bonettini et al. \cite{Bonettini17,Bonettini20} developed a line-search based VMiPG method by the same line-search criterion as in \eqref{Bline-search1} but the following inexactness criterion weaker than \eqref{Binexact1}:
\begin{equation}\label{Binexact}
 \Theta_k(y^k)<\Theta_k(x^k)\ \ {\rm and}\ \ 	\Theta_k(y^k)-\Theta_k(\overline{x}^k)\le\frac{\tau_k}{2}\big(\Theta_k(x^k)-\Theta_k(y^k)\big),
\end{equation}
and proved in \cite{Bonettini17} that the iterate sequence generated by the scheme \eqref{Binexact} and \eqref{Bline-search1} with $\tau_k\equiv \tau>0$ converges to a stationary point provided that $F$ is a KL function, $\nabla\!f$ is Lipschitz continuous on ${\rm dom}\,g$ , and ${\rm dist}(0,\partial F(y^k))\le b\|x^{k+1}\!-x^k\|+\zeta_{k+1}$ for a non-negative summable $\{\zeta_k\}_{k\in\mathbb{N}}$. The third condition, as illustrated by \cite[Example 1]{Bonettini20}, may not hold. Then, by leveraging the forward-backward (FB) envelope of $F$, they established in \cite{Bonettini20} that the iterate sequence generated with a sequence of uniformly bounded positive definite operators $\{\mathcal{G}_k\}_{k\in\mathbb{N}}$ according to  the criterion \eqref{Binexact} for a non-negative square-root summable $\{\tau_k\}_{k\in\mathbb{N}}$ converges to a stationary point if $f$ is real analytic on the open set $\mathcal{O}$ and $g$ is subanalytic and bounded from below. Recently, Bonettini et al. \cite{Bonettini21} improved the convergence proof of the iterate sequence in \cite{Bonettini17}, and obtained the convergence of the whole iterate sequence under the KL property of the function $\mathbb{X}\times\mathbb{R}\ni(x,t)\mapsto F(x)+\frac{1}{2}t^2$ and the Lipschitz continuity of $\nabla\!f$ on ${\rm dom}\,g$. In addition, Lee and Wright \cite{Lee19} provided the global analysis of iteration complexity for a line-search based VMiPG method involving the inexactness criterion $\Theta_k(y^k)\le(1\!-\!\eta)\Theta_k(\overline{x}^k)+\eta \Theta_k(x^k)$ for some $\eta\in[0,1)$, and for the convex and L-smooth $f$, they achieved the $O(1/k)$ convergence rate for the objective values under the uniformly bounded positive definiteness of $\{\mathcal{G}_k\}_{k\in\mathbb{N}}$, and for the L-smooth $f$, under the same restriction on $\{\mathcal{G}_k\}_{k\in\mathbb{N}}$, they proved that the minimum value of the norm of a first-order optimality measure over the first $k$ iterations converges to zero at a sublinear rate of $O(1/\sqrt{k})$. 

From the above discussions, we note that almost all the existing VMiPG methods for the nonconvex and nonsmooth problem \eqref{prob} lack a local linear
convergence rate of the iterate sequence. The iterate sequence of the proximal Newton method proposed in \cite{LiuPan23} is shown to have a local linear convergence rate under the KL property of $F$ with exponent $1/2$, but the involved inexactness criterion requires the exact proximal mapping of $g$, which restricts its application in some scenarios. The convergence results obtained in \cite{Bonettini20,Bonettini21} contain the convergence rate of the objective value sequence but not that of the iterate sequence, and moreover, it is unclear when the iterate sequence has a linear convergence rate. The paper aims to propose a novel line-search based VMiPG method whose iterate sequence not only converges to a stationary point but also possesses a local linear convergence rate.
\subsection{Our contribution}\label{sec1.2} 

 Following the same line as in \cite{Bonettini17,Bonettini20}, we develop a line-search based VMiPG method by using a novel inexactness criterion and line-search criterion. Our inexactness criterion measures the inexactness of $y^k$ by controlling the difference $\Theta_k(y^k)-\Theta_k(\overline{x}^k)$ with $\|d^k\|^2$ instead of $\Theta_k(y^k)-\Theta_k(x^k)$, and the line-search criterion measures the decreasing of the objective function $F$ by $\|d^k\|^2$ rather than $\Theta_k(y^k)-\Theta_k(x^k)$. The contributions of this work are summarized as follows.
\begin{itemize}
 \item[(1)] For problem \eqref{prob} with $g$ having no closed-form proximal mapping, we propose a line-search based VMiPG method with a sequence of uniformly bounded positive definite operators $\mathcal{G}_k$ under an inexactness criterion and a line-search criterion different from those adopted in \cite{Bonettini17,Bonettini20}. By leveraging a potential function $\Phi_{\gamma}$ (see equation \eqref{mf}) constructed with the FB envelope of $F$, we prove that the generated iterate sequence converges to a stationary point under the KL property of $\Phi_{\gamma}$, and the convergence has a local R-linear rate if $\Phi_{\gamma}$ is a KL function of exponent $1/2$; see Theorem \ref{R-linear}.  The KL property of $\Phi_{\gamma}$ can be guaranteed if $f$ and $g$ are definable in the same o-minimal structure over the real field $(\mathbb{R},+,\cdot)$, and its KL property of exponent $1/2$ is implied by that of the FB envelope of $F$ and the latter is shown to hold if $F$ is a KL function of exponent $1/2$; see Proposition \ref{KL-FBE}. Our VMiPG method is applicable to $g$ without closed-form proximal mapping, and produces the iterate sequence with a local R-linear convergence rate, and its convergence removes the Lipschitz continuity of $\nabla\!f$ on ${\rm dom}\,g$.   
	
 \item[(2)] We apply the proposed VMiPG method with the variable metric linear operators constructed with the Hessian of $f$ (VMiPG-H), the split-gradient (SG) strategy \cite{Lanteri02,Porta15,Bonettini17} (VMiPG-S), and the 0-memory BFGS strategy in \cite{Becker19,Bonettini20} (VMiPG-BFGS) to the scenario that $f$ and $g$ take the form of \eqref{fg-fun}, which is very common in image processing, statistics and machine learning. For the inexact minimizer $y^k$ of subproblem \eqref{subprobx} involved in VMiPG-H, we develop a dual alternating direction method of multipliers (dADMM) armed with the semismooth Newton method to seek it; and for that of subproblem \eqref{subprobx} in VMiPG-S and VMiPG-BFGS, we apply FISTA \cite{Beck09,Chambolle15} directly to its dual to obtain it. We compare the performance of VMiPG-H (respectively, VMiPG-S and VMiPG-BFGS) with that of VMILA-H (respectively, VMILA-S and VMILA-BFGS) on restoration of blurred and noisy images and high-dimensional fused weighted-lasso regressions, where VMILA-H (VMILA-S and VMILA-BFGS) are the state-of-art variable metric line-search based algorithms developed in \cite{Bonettini17,Bonettini20} with the variable metric linear operators constructed in the same way as for VMiPG and the same subproblem solver as for the counterpart of VMiPG. Comparison results show that VMiPG-H (respectively, VMILA-H) is significantly superior to VMiPG-BFGS (respectively, VMILA-BFGS) in terms of the objective values and the running time for high-dimensional fused weighted-lasso regressions, but requires more running time than VMiPG-S and VMiPG-BFGS (respectively, VMILA-S and VMILA-BFGS) for restoration of blurred and noisy images. This means that the Hessian-type variable metric linear operator along with the dADMM subproblem solver is more suitable for dealing with high-dimensional fused weighted-lasso regressions, which are much more difficult than restoration of blurred and noisy images. The three kinds of VMiPG methods have comparable performace with the counterpart of VMILA though they require different (inner) iterations due to different inexactness criterion and line-search criterion.
\end{itemize}

\noindent
{\bf Notation.} Throughout this paper, $\mathcal{L}(\mathbb{X},\mathbb{Y})$ represents the set of all linear mappings from $\mathbb{X}$ to a finite dimensional real vector space $\mathbb{Y}$, and $\mathcal{I}$ denotes the identity mapping in $\mathcal{L}(\mathbb{X},\mathbb{X})$. For a linear mapping $\mathcal{A}\in\mathcal{L}(\mathbb{X},\mathbb{Y})$, $\mathcal{A}^*\!:\mathbb{Y}\to\mathbb{X}$ means its adjoint operator, and $\|\mathcal{A}\|$ denotes its spectral norm. For any $\mathcal{Q}\in\mathcal{L}(\mathbb{X},\mathbb{X})$,  $\mathcal{Q}\succeq 0$ (respectively, $\mathcal{Q}\succ 0$) means that $\mathcal{Q}$ is self-adjoint and positive semidefinite (respectively, positive definite). For a closed set $C\subset\mathbb{X}$, $\Pi_{C}$ denotes the projection operator onto the set $C$ on the norm $\|\cdot\|$, ${\rm dist}(x,C)$ means the Euclidean distance of a vector $x\in\mathbb{R}^n$ to the set $C$, and $\delta_{C}$ denotes the indicator function of $C$. For a vector $x\in\mathbb{X}$, $\mathbb{B}(x,\delta)$ denotes the closed ball centered at $x$ with radius $\delta>0$. For a closed proper $h\!:\mathbb{X}\to\overline{\mathbb{R}}$, write $[a<h<b]:=\{x\in\mathbb{X}\,|\, a<h(x)<b\}$, let $h^*$ be its conjugate function, i.e. $h^*(z)=\sup_{x\in\mathbb{X}}\big\{\langle z,x\rangle-h(x)\big\}$, and for any $\gamma>0$ and any $\mathcal{L}(\mathbb{X},\mathbb{X})\ni\mathcal{Q}\succ 0$, denote by ${\rm prox}_{\gamma h}^{\mathcal{Q}}$ and $e_{\gamma h}^{\mathcal{Q}}$ its proximal mapping and Moreau envelope associated with $\mathcal{Q}$ and parameter $\gamma>0$: 
\[
{\rm prox}_{\gamma h}^{\mathcal{Q}}(x)\!:=\!\mathop{\arg\min}_{z\in\mathbb{X}}
\Big\{\frac{1}{2\gamma}\|z-x\|_{\mathcal{Q}}^2+h(z)\Big\}\ {\rm and}\  
e_{\gamma h}^{\mathcal{Q}}(x)\!:=\!\min_{z\in\mathbb{X}}\Big\{\frac{1}{2\gamma}\|z-x\|_{\mathcal{Q}}^2+h(z)\Big\}.
\]  
For convenience, in the sequel, we record ${\rm prox}_{\gamma h}^{\mathcal{I}}$ and $e_{\gamma h}^{\mathcal{I}}$ as ${\rm prox}_{\gamma h}$ and $e_{\gamma h}$, respectively. 
\section{Preliminaries}\label{sec2}

We first introduce the concept of stationary points for problem \eqref{prob}. A vector $\overline{x}\in\mathbb{X}$ is called a stationary point of \eqref{prob} or the critical point of $F$ if $0\in\nabla\!f(\overline{x})+\partial g(\overline{x})$. Note that $\overline{x}$ is a stationary point of \eqref{prob} if and only if for any $\gamma>0$ and $\mathcal{L}(\mathbb{X},\mathbb{X})\ni\mathcal{D}\succ 0$, $\overline{x}-\gamma\mathcal{D}^{-1}\nabla\!f(\overline{x})\in(\mathcal{I}+\gamma\mathcal{D}^{-1}\partial g)(\overline{x})$, which is equivalent to saying that $\overline{x}-{\rm prox}_{\gamma g}^{\mathcal{D}}(\overline{x}-\gamma\mathcal{D}^{-1}\nabla\!f(\overline{x}))=0$ because $(\mathcal{I}+\gamma\mathcal{D}^{-1}\partial g)^{-1}(\cdot)={\rm prox}_{\gamma g}^{\mathcal{D}}(\cdot)$ by the convexity of $g$. We denote by $S^*$ the set of stationary points of \eqref{prob}, and define
\begin{equation}\label{rfun}
 r(x):=\left\{\begin{array}{cl}
 \|R(x)\|&\ {\rm if}\ x\in{\rm cl}({\rm dom}\,g),\\
 \infty &\ {\rm if}\ x\notin{\rm cl}({\rm dom}\,g)
 \end{array}\right.\ {\rm with}\ \ R(x):=x-{\rm prox}_{g}(x-\!\nabla\!f(x)).
\end{equation}  
\subsection{The forward-backward envelope}\label{sec2.2}

For any $\gamma>0$ and any $\mathcal{L}(\mathbb{X},\mathbb{X})\ni\mathcal{D}\succ 0$, let $ \mathcal{B}_{\phi_{\gamma,\mathcal{D}}}$ be the Bregman function defined by 
 \begin{equation*}
 \mathcal{B}_{\phi_{\gamma,\mathcal{D}}}(y,x):=\left\{\begin{array}{cl}   
   \phi_{\gamma,\mathcal{D}}(y)-\phi_{\gamma,\mathcal{D}}(x)-\langle\nabla\phi_{\gamma,\mathcal{D}}(x),y-x\rangle&\ {\rm if}\ 
   (x,y)\in{\rm cl}\,({\rm dom}\,g)\times\mathbb{X},\\
   \infty&\ {\rm if}\ 
   (x,y)\notin{\rm cl}\,({\rm dom}\,g)\times\mathbb{X}
   \end{array}\right. 
 \end{equation*}
 with $\phi_{\gamma,\mathcal{D}}(\cdot)\!:=\frac{1}{2\gamma}\|\cdot\|_{\mathcal{D}}^2-f(\cdot)$.
 The generalized FB envelope of $F$ w.r.t. $\mathcal{B}_{\phi_{\gamma,\mathcal{D}}}$ is defined as
 \begin{equation}\label{FgamD-def}
  F_{\gamma,\mathcal{D}}(x):=\inf_{y\in\mathbb{X}}\Big\{G_{\gamma,\mathcal{D}}(x,y):=F(y)+\mathcal{B}_{\phi_{\gamma,\mathcal{D}}}(y,x)\Big\}
  \quad\forall x\in\mathbb{X}.
 \end{equation}
 By the expression of $\mathcal{B}_{\phi_{\gamma,\mathcal{D}}}$, it is immediate to check that $G_{\gamma,\mathcal{D}}$ has the following expression
 \begin{align}\label{Ggam-def}
  G_{\gamma,\mathcal{D}}(x,y)
  &=\left\{\begin{array}{cl}
  g(y)+\langle\nabla\!f(x),y-x\rangle+\frac{1}{2\gamma}\|y-x\|_{\mathcal{D}}^2+f(x)
  &\ {\rm if}\ 
   (x,y)\in{\rm cl}\,({\rm dom}\,g)\times\mathbb{X},\\
   \infty&\ {\rm if}\ 
   (x,y)\notin{\rm cl}\,({\rm dom}\,g)\times\mathbb{X},
   \end{array}\right.
 \end{align}
 In the sequel, we always write $F_{\gamma,\mathcal{I}}$ as $F_{\gamma}$. The following lemma summarizes some properties of $F_{\gamma}$ used in the subsequent sections. For more discussions on the FB envelope, see \cite{STELLA17,themelis18,Bonettini20}.
\begin{lemma}\label{FBenvelop}
 Fix any $\gamma>0$ and $\mathcal{L}(\mathbb{X},\mathbb{X})\ni\mathcal{D}\succ 0$. The following assertions hold for $F$ and $F_{\gamma,\mathcal{D}}$.
 \begin{description}
 \item[(i)] For any $x\in\mathbb{X}$, $F_{\gamma,\mathcal{D}}(x)\le F(x)$.
		
 \item[(ii)] $F_{\gamma,\mathcal{D}}$ is continuously differentiable on ${\rm cl}\,({\rm dom}\,g)$, and at any $x\in{\rm cl}\,({\rm dom}\,g)$,  
		\[
		\nabla F_{\gamma,\mathcal{D}}(x)=\gamma^{-1}[\mathcal{I}-\gamma\mathcal{D}^{-1}\nabla^2\!f(x)]\mathcal{D}\big(x-{\rm prox}_{\gamma g}^{\mathcal{D}}(x-\gamma\mathcal{D}^{-1}\nabla\!f(x))\big).
		\]
		
 \item[(iii)] If $\overline{x}$ is a critical point of $F$, then $\nabla F_{\gamma,\mathcal{D}}(\overline{x})=0$ and $F_{\gamma,\mathcal{D}}(\overline{x})=F(\overline{x})$.
		 
 \item[(iv)] If $F_{\gamma,\mathcal{D}}(\overline{x})=F(\overline{x})$ and  $\overline{x}\in{\rm cl}\,({\rm dom}\,g)$, then $0\in\partial F(\overline{x})$. 
 \end{description}   
\end{lemma}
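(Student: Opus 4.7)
The plan is to verify the four assertions in sequence; parts (i), (iii), and (iv) are essentially one-line consequences of the definition of $G_{\gamma,\mathcal{D}}$ together with the strong convexity of the inner problem, whereas part (ii) is the substantive step and contains the only real calculation.

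For (i), I would plug $y=x$ into the infimum defining $F_{\gamma,\mathcal{D}}(x)$; since $\mathcal{B}_{\phi_{\gamma,\mathcal{D}}}(x,x)=0$, this immediately gives $F_{\gamma,\mathcal{D}}(x)\le G_{\gamma,\mathcal{D}}(x,x)=F(x)$.

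For (ii), I would first observe that for every $x\in{\rm cl}({\rm dom}\,g)$ the function $y\mapsto G_{\gamma,\mathcal{D}}(x,y)$ is $\gamma^{-1}\mathcal{D}$-strongly convex, so it admits the unique minimizer $\overline{y}(x)={\rm prox}_{\gamma g}^{\mathcal{D}}(x-\gamma\mathcal{D}^{-1}\nabla\!f(x))$. Completing the square in $y$ inside the braces rewrites
\[
F_{\gamma,\mathcal{D}}(x)=f(x)-\tfrac{\gamma}{2}\langle\nabla\!f(x),\mathcal{D}^{-1}\nabla\!f(x)\rangle+e_{\gamma g}^{\mathcal{D}}\bigl(x-\gamma\mathcal{D}^{-1}\nabla\!f(x)\bigr),
\]
so $C^1$-smoothness on ${\rm cl}({\rm dom}\,g)$ follows from the twice continuous differentiability of $f$ on the open set $\mathcal{O}$ (Assumption \ref{ass0}(ii)) combined with the standard fact that the Moreau envelope of a convex function is continuously differentiable. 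To derive the gradient formula I would invoke Danskin's envelope theorem: since $\overline{y}(x)$ is the unique minimizer, the chain-rule contribution through $y$ vanishes and $\nabla F_{\gamma,\mathcal{D}}(x)=\nabla_x G_{\gamma,\mathcal{D}}(x,\overline{y}(x))$. Direct differentiation of the second line of \eqref{Ggam-def} with respect to $x$ cancels the $\nabla\!f(x)$ terms and yields $\bigl[\gamma^{-1}\mathcal{D}-\nabla^2\!f(x)\bigr](x-\overline{y}(x))$; a final algebraic rearrangement produces the stated form.

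For (iii), the fixed-point characterization of stationary points recalled at the beginning of Section \ref{sec2} gives $\overline{x}={\rm prox}_{\gamma g}^{\mathcal{D}}(\overline{x}-\gamma\mathcal{D}^{-1}\nabla\!f(\overline{x}))$; substituting into the formula of (ii) yields $\nabla F_{\gamma,\mathcal{D}}(\overline{x})=0$, and the equality $F_{\gamma,\mathcal{D}}(\overline{x})=F(\overline{x})$ follows because $\overline{y}(\overline{x})=\overline{x}$ forces $G_{\gamma,\mathcal{D}}(\overline{x},\overline{x})=F(\overline{x})$. For (iv), the equality $F_{\gamma,\mathcal{D}}(\overline{x})=F(\overline{x})$ combined with (i) forces $y=\overline{x}$ to attain the infimum in $y\mapsto G_{\gamma,\mathcal{D}}(\overline{x},y)$, and writing the first-order optimality condition in $y$ at this minimizer yields $0\in\partial g(\overline{x})+\nabla\!f(\overline{x})=\partial F(\overline{x})$. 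The one delicate point I expect is the gradient calculation in (ii): one has to track the chain rule through the $x$-dependence of $\nabla\!f(x)$ inside the argument of the prox, and it is exactly this chain-rule term that produces the Hessian factor in the final expression, so the $C^2$ hypothesis on $f$ is essential precisely at that point.
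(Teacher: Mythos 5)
Your argument is correct. Note first that the paper itself only proves part (iv), citing \cite[Theorem 2]{Bonettini20} for parts (i)--(iii); your proposal supplies self-contained proofs of all four parts. Your treatment of (iv) --- observing that $F_{\gamma,\mathcal{D}}(\overline{x})=F(\overline{x})=G_{\gamma,\mathcal{D}}(\overline{x},\overline{x})$ forces $\overline{x}$ to be the unique (by strong convexity) minimizer of $G_{\gamma,\mathcal{D}}(\overline{x},\cdot)$ and then reading off the first-order optimality condition --- is the same argument the paper gives, merely phrased via attainment of the infimum rather than via the strong-convexity growth inequality $0=h_{\overline{x}}(\overline{x})\ge h_{\overline{x}}(z^*)+\frac{1}{2\gamma}\|\overline{x}-z^*\|_{\mathcal{D}}^2$. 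Your proofs of (i) and (iii) are fine (for (i), the case $x\notin{\rm cl}({\rm dom}\,g)$ is trivial since both sides equal $+\infty$), and for (ii) both of your routes (the Moreau-envelope decomposition followed by the chain rule, or the envelope theorem applied to $G_{\gamma,\mathcal{D}}$, which is justified here because the minimizer ${\rm prox}_{\gamma g}^{\mathcal{D}}(x-\gamma\mathcal{D}^{-1}\nabla\!f(x))$ depends continuously on $x$) are legitimate and both yield $\nabla F_{\gamma,\mathcal{D}}(x)=\big[\gamma^{-1}\mathcal{D}-\nabla^2\!f(x)\big]\big(x-{\rm prox}_{\gamma g}^{\mathcal{D}}(x-\gamma\mathcal{D}^{-1}\nabla\!f(x))\big)$, exactly as you state. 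One small caveat: the ``final algebraic rearrangement'' you allude to actually produces $\gamma^{-1}[\mathcal{I}-\gamma\nabla^2\!f(x)\mathcal{D}^{-1}]\mathcal{D}(\cdots)$, i.e.\ the adjoint of the bracket appearing in the lemma's displayed formula; the two coincide only when $\mathcal{D}$ and $\nabla^2\!f(x)$ commute (in particular for $\mathcal{D}=\mathcal{I}$, the only case the paper uses in its convergence analysis), so this is a harmless transcription issue in the statement rather than a gap in your derivation.
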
	
\begin{proof}
 The proofs of Parts (i)-(iii) can be found in \cite[Theorem 2]{Bonettini20}, so it suffices to prove part (iv). As $F_{\gamma,\mathcal{D}}(\overline{x})=F(\overline{x})$ and $\overline{x}\in{\rm cl}\,({\rm dom}\,g)$, by the definition of $F_{\gamma,\mathcal{D}}$ in \eqref{FgamD-def}, it follows that
 \[
 0=\min_{z\in\mathbb{X}}h_{\overline{x}}(z)\!:=\langle\nabla\! f(\overline{x}),z-\overline{x}\rangle+\frac{1}{2\gamma}\|z-\overline{x}\|_{\mathcal{D}}^2+g(z)-g(\overline{x}). 
 \]
 Note that $h_{\overline{x}}$ is strongly convex and its unique minimizer is $z^*={\rm prox}_{\gamma g}^{\mathcal{D}}(\overline{x}-\gamma\mathcal{D}^{-1}\nabla\!f(\overline{x}))$. Along with $0=h_{\overline{x}}(\overline{x})\ge h_{\overline{x}}(z^*)+\frac{1}{2\gamma}\|\overline{x}-z^*\|_{\mathcal{D}}^2=\frac{1}{2\gamma}\|\overline{x}-z^*\|_{\mathcal{D}}^2$, we have $z^*=\overline{x}$, so $0\in\partial F(\overline{x})$. \qed  
\end{proof}
\subsection{Kurdyka-{\L}ojasiewicz property}\label{sec2.3}

To introduce KL functions, for any $\varpi>0$, we denote by $\Upsilon_{\!\varpi}$ the set consisting of all continuous concave $\varphi\!:[0,\varpi)\to\mathbb{R}_{+}$ that are continuously differentiable on $(0,\varpi)$ with $\varphi(0)=0$ and $\varphi'(s)>0$ for all $s\in(0,\varpi)$. The definition of KL functions are stated as follows. 
\begin{definition}\label{KL-def}
 A proper function $h\!:\mathbb{X}\to\overline{\mathbb{R}}$ is said to have the KL property at a point $\overline{x}\in{\rm dom}\,\partial h$ if there exist $\delta>0,\varpi\in(0,\infty]$ and $\varphi\in\Upsilon_{\!\varpi}$ such that for all $x\in\mathbb{B}(\overline{x},\delta)\cap\big[h(\overline{x})<h<h(\overline{x})+\varpi\big]$, $\varphi'(h(x)\!-\!h(\overline{x})){\rm dist}(0,\partial h(x))\ge 1$. If $\varphi$ can be chosen to be the function $t\mapsto ct^{1-\theta}$ with $\theta\in[0,1)$ for some $c>0$, then $h$ is said to have the KL property of exponent $\theta$ at $\overline{x}$. If $h$ has the KL property (of exponent $\theta$) at each point of ${\rm dom}\,\partial h$, it is called a KL function (of exponent $\theta$).
\end{definition}

As discussed in \cite[Section 4]{Attouch10}, the KL property is universal and the functions definable in an o-minimal structure (in particular semialgebraic or globally subanalytic functions) possess this property. By \cite[Lemma 2.1]{Attouch10}, a proper lsc function $h\!:\mathbb{X}\to\overline{\mathbb{R}}$ has the KL property of exponent $0$ at every noncritical point. Thus, to show that a proper lsc function is a KL function of exponent $\theta\in[0,1)$, it suffices to check its KL property of exponent $\theta\in[0,1)$ at all critical points. 

 Next we apply \cite[Theorem 3.1]{YuLiPong21} to prove that the KL property of exponent $\theta\in[\frac{1}{2},1)$ of $F$ implies that of its generalized FB envelope. For this purpose, we need the following lemma, which states that the function $G_{\gamma,\mathcal{D}}$ defined in \eqref{Ggam-def} is level bounded in $y$ locally uniformly in $x$. 
\begin{lemma}\label{lemma-lbound} 
 If $g$ is prox-bounded with threshold  $\lambda_{g}\in(0,\infty]$, then for any $\mathcal{L}(\mathbb{X},\mathbb{X})\ni\mathcal{D}\succeq\underline{\eta}\mathcal{I}$ with some $\underline{\eta}>0$ and any $\gamma\in(0,\underline{\eta}\lambda_{g})$, the function $G_{\gamma,\mathcal{D}}$ is level-bounded in $y$ locally uniformly in $x$.
\end{lemma}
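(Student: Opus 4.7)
The plan is to verify the definition of level-boundedness in $y$ locally uniformly in $x$ directly at an arbitrary $\overline{x}\in\mathbb{X}$, splitting into two cases based on whether $\overline{x}\in{\rm cl}\,({\rm dom}\,g)$ or not.

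For the trivial case $\overline{x}\notin{\rm cl}\,({\rm dom}\,g)$, since the latter set is closed, there is an open neighborhood $V$ of $\overline{x}$ disjoint from it, on which $G_{\gamma,\mathcal{D}}(x,\cdot)\equiv\infty$ by the definition in \eqref{Ggam-def}, so every sublevel set is empty and any bounded set works.

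The substantive case is $\overline{x}\in{\rm cl}\,({\rm dom}\,g)$. Fix any closed ball $V=\mathbb{B}(\overline{x},\rho_0)$; I will show that for every $\alpha\in\mathbb{R}$, the set $\bigcup_{x\in V}\{y:G_{\gamma,\mathcal{D}}(x,y)\le\alpha\}$ is bounded. Since $\gamma<\underline{\eta}\lambda_g$, the number $\gamma':=\gamma/\underline{\eta}$ lies in $(0,\lambda_g)$. Choose $\widetilde{\gamma}$ with $\gamma'<\widetilde{\gamma}<\lambda_g$; by the prox-boundedness of $g$ with threshold $\lambda_g$ (equivalently, by Rockafellar--Wets, Theorem~1.25), there exists a constant $C_1>0$ such that
\[
 g(y)\ge -\tfrac{1}{2\widetilde{\gamma}}\|y\|^2-C_1\qquad\forall y\in\mathbb{X}.
\]
The next step is to extract a positive quadratic in $\|y\|$ from $\tfrac{1}{2\gamma}\|y-x\|_{\mathcal{D}}^2$. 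Using $\mathcal{D}\succeq\underline{\eta}\mathcal{I}$ and Young's inequality, for any $\rho\in(0,1)$,
\[
 \tfrac{1}{2\gamma}\|y-x\|_{\mathcal{D}}^2\ \ge\ \tfrac{\underline{\eta}}{2\gamma}\|y-x\|^2\ \ge\ \tfrac{\underline{\eta}(1-\rho)}{2\gamma}\|y\|^2+\tfrac{\underline{\eta}(1-1/\rho)}{2\gamma}\|x\|^2.
\]
Because $\gamma/(\underline{\eta}\widetilde{\gamma})<1$, one can fix $\rho\in(0,1)$ small enough that
\[
 \mu:=\tfrac{\underline{\eta}(1-\rho)}{2\gamma}-\tfrac{1}{2\widetilde{\gamma}}\ >\ 0.
\]

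To conclude, I would collect the locally bounded terms. On the compact set $V$, the continuity of $f$ and $\nabla\!f$ provides $M_1:=\sup_{x\in V}\|\nabla\!f(x)\|<\infty$ and $M_2:=\sup_{x\in V}(|f(x)|+\|x\|)<\infty$. Combining the displayed lower bound for $g$, the displayed bound for $\tfrac{1}{2\gamma}\|y-x\|_{\mathcal{D}}^2$, and the estimate $\langle\nabla\!f(x),y-x\rangle\ge -M_1\|y\|-M_1 M_2$ obtained by Cauchy--Schwarz, one arrives at an inequality of the form
\[
 G_{\gamma,\mathcal{D}}(x,y)\ \ge\ \mu\|y\|^2-M_1\|y\|-C_2\qquad\forall x\in V,\ \forall y\in\mathbb{X},
\]
where $C_2$ is a constant depending only on $V$, $\gamma$, $\underline{\eta}$, $\widetilde{\gamma}$, $\rho$, $C_1$, $M_1$, $M_2$. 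Since $\mu>0$, the sublevel set $\{y:\mu\|y\|^2-M_1\|y\|-C_2\le\alpha\}$ is bounded, and it contains $\{y:G_{\gamma,\mathcal{D}}(x,y)\le\alpha\}$ for every $x\in V$. This proves the required local uniform level-boundedness.

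The only subtle point is the calibration of constants: one needs $\gamma'<\lambda_g$ to invoke prox-boundedness and then a further slack between $\gamma'$ and $\lambda_g$ to absorb the $\rho$-loss from Young's inequality. The strict inequality $\gamma<\underline{\eta}\lambda_g$ in the hypothesis is precisely what supplies this slack, so the main obstacle is really just the bookkeeping rather than anything deep.
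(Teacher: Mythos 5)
Your proof is correct, and it reaches the conclusion by a somewhat different route than the paper. The paper argues by contradiction: it supposes a sequence $x^k\to\overline{x}$ with unbounded sublevel sets, extracts an unbounded sequence $\{y^k\}$ with $G_{\gamma,\mathcal{D}}(x^k,y^k)\le\mu$, and then splits into two cases according to whether $g$ is bounded below, invoking the prox-boundedness (via the quadratic minorant $g+\frac{1}{2\lambda_g}\|\cdot\|^2$ bounded below) only in the second case to force the right-hand side to $+\infty$. You instead prove a single direct, quantitative coercivity estimate $G_{\gamma,\mathcal{D}}(x,y)\ge\mu\|y\|^2-M_1\|y\|-C_2$ uniform over $x$ in a neighborhood, which handles both cases of the paper at once and makes the role of the strict inequality $\gamma<\underline{\eta}\lambda_g$ completely explicit through the choice of $\widetilde{\gamma}$ and the Young's-inequality parameter $\rho$. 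The underlying mechanism is the same in both arguments (the quadratic minorant supplied by prox-boundedness is dominated by the $\frac{1}{2\gamma}\|\cdot\|_{\mathcal{D}}^2$ term because $\mathcal{D}\succeq\underline{\eta}\mathcal{I}$), but your version is arguably cleaner: it avoids the case split and the subsequence extraction, and by working with $\widetilde{\gamma}$ strictly below $\lambda_g$ it sidesteps the boundary question of whether $g+\frac{1}{2\lambda_g}\|\cdot\|^2$ itself is bounded below, which the paper's Case~2 glosses over. Two small points of bookkeeping: the quadratic-minorant characterization of prox-boundedness is \cite[Exercise 1.24]{RW98} rather than Theorem~1.25, and you should shrink $\rho_0$ so that $V=\mathbb{B}(\overline{x},\rho_0)\subset\mathcal{O}$ (or restrict the suprema defining $M_1,M_2$ to $V\cap{\rm cl}\,({\rm dom}\,g)$, the only points where $G_{\gamma,\mathcal{D}}(x,\cdot)$ is not identically $+\infty$), since $f$ and $\nabla\!f$ are only guaranteed to be defined on $\mathcal{O}$. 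Neither affects the validity of the argument.
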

\begin{proof}
 Fix any $\gamma\in(0,\underline{\eta}\lambda_g)$. By \cite[Definition 1.16]{RW98}, we need to argue that for each $\overline{x}\in\mathbb{X}$ and $\mu\in\mathbb{R}$, there exists a neighborhood $V$ of $\overline{x}$ and a bounded set $B$ such that for all $x\in V$,
 $\mathcal{L}_{x,\mu}\!:=\big\{y\in\mathbb{X}\ |\ G_{\gamma,\mathcal{D}}(x,y)\le\mu\big\}\subset B$.
 Pick any $\overline{x}\in\mathbb{X}$ and $\mu\in\mathbb{R}$. When $\overline{x}\notin{\rm cl}\,({\rm dom}\,g)$, from the openness of $\mathbb{X}\backslash{\rm cl}\,({\rm dom}\,g)$, there necessarily exists a neighborhood $V$ of $\overline{x}$ such that $V\subset\mathbb{X}\backslash{\rm cl}\,({\rm dom}\,g)$, which by the definition of $G_{\gamma,\mathcal{D}}$ means that for all $x\in V$ the set $\mathcal{L}_{x,\mu}$ is empty, and then is bounded. Hence, it suffices to consider the case that $\overline{x}\in{\rm cl}\,({\rm dom}\,g)$. Suppose on the contrary that there exists a sequence $\{x^k\}_{k\in\mathbb{N}}\to\overline{x}$ such that $\mathcal{L}_{x^k,\mu}$ is unbounded. Then, there must exist an unbounded sequence $\{y^k\}_{k\in\mathbb{N}}$ such that  $G_{\gamma,\mathcal{D}}(x^k,y^k)\le \mu$ for each $k\in\mathbb{N}$. Together with the expression of $G_{\gamma,\mathcal{D}}$ in \eqref{Ggam-def}, for each $k\in\mathbb{N}$, $x^k\in{\rm cl}\,({\rm dom}\,g)$ and   
 \begin{equation}\label{ineq-Ggam}
  \mu\ge G_{\gamma,\mathcal{D}}(x^k,y^k)=g(y^k)+\langle\nabla\!f(x^k),y^k\rangle+\frac{1}{2\gamma}\|y^k-x^k\|_{\mathcal{D}}^2+f(x^k)-\langle\nabla\!f(x^k),x^k\rangle.
 \end{equation}
 \noindent
 {\bf Case 1: $g$ is bounded from below.} In this case,  from the boundedness of $\{x^k\}_{k\in\mathbb{N}}\subset{\rm cl}\,({\rm dom}\,g)$, we have that $\{f(x^k)\}_{k\in\mathbb{N}}$ and $\{\nabla\!f(x^k)\}_{k\in\mathbb{N}}$ are bounded, which along with $\|y^k\|\to\infty$ as $k\to\infty$ implies that the right hand side of \eqref{ineq-Ggam} tends to $\infty$ as $k\to\infty$, which is impossible. 
 
 \noindent
 {\bf Case 2: $g$ is not bounded from below.} Now from \cite[Exercise 1.24]{RW98}, the function $g+\frac{1}{2\lambda_g}\|\cdot\|^2$ is bounded from below. Note that the above inequality \eqref{ineq-Ggam} can equivalently be written as 
 \begin{align*}
  \mu\ge G_{\gamma}(x^k,y^k)&=g(y^k)+\frac{1}{2\lambda_g}\|y^k\|^2+\frac{1}{2}\|y^k\|_{\gamma^{-1}\mathcal{D}-\lambda_g^{-1}\mathcal{I}}^2+f(x^k)+\frac{1}{2\gamma}\|x^k\|_{\mathcal{D}}^2\\
  &\quad\ -\langle \gamma^{-1}\mathcal{D}x^k-\nabla\!f(x^k),y^k\rangle-\langle\nabla\!f(x^k),x^k\rangle\quad\ \forall k\in\mathbb{N}.
 \end{align*} 
 Using the boundedness of $\{x^k\}_{k\in\mathbb{N}},\{f(x^k)\}_{k\in\mathbb{N}}$ and $\{\nabla\!f(x^k)\}_{k\in\mathbb{N}}$ and $\gamma^{-1}\mathcal{D}-\lambda_g^{-1}\mathcal{I}\succ 0$, the right hand side of the last inequality tends to $\infty$ as $k\to\infty$. This is also impossible. \qed 
 \end{proof}
\begin{proposition}\label{KL-FBE}
 Consider any $\overline{x}\in{\rm cl}\,({\rm dom}\,g)$. Let $\varepsilon>0$ be such that $\mathbb{B}(\overline{x},\varepsilon)\subset\mathcal{O}$ and write $L:=\max_{x\in\mathbb{B}(\overline{x},\varepsilon)}{\rm lip}\nabla\!f(x)$, where ${\rm lip}\nabla\!f(x)$ is the Lipschitz modulus of $\nabla\!f$ at $x$. Suppose that $g$ is prox-bounded with threshold $\lambda_g\in(0,\infty]$, and that $F$ has the KL property of exponent $\theta\in[{1}/{2},1)$ at $\overline{x}$. Then, for any $\mathcal{L}(\mathbb{X},\mathbb{X})\ni\mathcal{D}\succeq\underline{\eta}\mathcal{I}$ with some $\underline{\eta}>0$ and any $\gamma\in(0,\min\{\underline{\eta}\lambda_g,{\underline{\eta}}/{L}\})$, the function $F_{\gamma,\mathcal{D}}$ has the KL property of exponent $\theta$ at $\overline{x}$.
\end{proposition}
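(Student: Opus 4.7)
The plan is to deduce the KL property of $F_{\gamma,\mathcal{D}}$ at $\overline{x}$ by viewing it as the inf-projection $F_{\gamma,\mathcal{D}}(x)=\inf_{y}G_{\gamma,\mathcal{D}}(x,y)$ and applying the KL-exponent calculus \cite[Theorem 3.1]{YuLiPong21}. Before doing so, I would dispose of the easy case in which $\overline{x}$ is not a critical point of $F$. Our choice $\gamma<\underline{\eta}/L$ guarantees that $\mathcal{I}-\gamma\mathcal{D}^{-1}\nabla^2\!f(\overline{x})$ is invertible, so by Lemma \ref{FBenvelop}(ii) one has $\nabla F_{\gamma,\mathcal{D}}(\overline{x})\neq 0$. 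Combining the continuity of $\nabla F_{\gamma,\mathcal{D}}$ on ${\rm cl}\,({\rm dom}\,g)$ with the fact that $\partial F_{\gamma,\mathcal{D}}$ is empty outside ${\rm cl}\,({\rm dom}\,g)$ (where $F_{\gamma,\mathcal{D}}\equiv+\infty$), one obtains a neighborhood $\mathbb{B}(\overline{x},\delta)$ on which ${\rm dist}(0,\partial F_{\gamma,\mathcal{D}}(\cdot))$ is bounded away from zero, delivering the KL inequality with any exponent.

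Assume now that $\overline{x}$ is a critical point of $F$. The stationarity characterization $\overline{x}={\rm prox}_{\gamma g}^{\mathcal{D}}(\overline{x}-\gamma\mathcal{D}^{-1}\nabla\!f(\overline{x}))$, together with the strong convexity of $y\mapsto G_{\gamma,\mathcal{D}}(\overline{x},y)$ (the convex $g$ plus the quadratic $\frac{1}{2\gamma}\|\cdot-\overline{x}\|_{\mathcal{D}}^2$), identifies $\overline{x}$ as the unique minimizer of $G_{\gamma,\mathcal{D}}(\overline{x},\cdot)$. The two hypotheses to verify for \cite[Theorem 3.1]{YuLiPong21} are then: (a) the level-boundedness of $G_{\gamma,\mathcal{D}}$ in $y$ locally uniformly in $x$, which is precisely Lemma \ref{lemma-lbound} under $\gamma<\underline{\eta}\lambda_g$; and (b) the KL property of exponent $\theta$ of $G_{\gamma,\mathcal{D}}$ at $(\overline{x},\overline{x})$.

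For (b) I would employ the decomposition $G_{\gamma,\mathcal{D}}(x,y)=\widetilde{F}(x,y)+Q(x,y)$ with $\widetilde{F}(x,y):=F(y)$ and
\[
Q(x,y):=f(x)-f(y)+\langle\nabla\!f(x),y-x\rangle+\tfrac{1}{2\gamma}\|y-x\|_{\mathcal{D}}^2.
\]
Because $\mathbb{B}(\overline{x},\varepsilon)\subset\mathcal{O}$ and $\nabla\!f$ is $L$-Lipschitz there, $Q$ is $C^{1,1}$ on $\mathbb{B}(\overline{x},\varepsilon)\times\mathbb{B}(\overline{x},\varepsilon)$ with $Q(\overline{x},\overline{x})=0$ and $\nabla Q(\overline{x},\overline{x})=0$; concretely, $|Q(x,y)|\leq C(\|x-\overline{x}\|^2+\|y-\overline{x}\|^2)$ and $\|\nabla Q(x,y)\|\leq C'(\|x-\overline{x}\|+\|y-\overline{x}\|)$. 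Since $\partial\widetilde{F}(x,y)=\{0\}\times\partial F(y)$, $\widetilde{F}$ automatically inherits the KL exponent $\theta$ of $F$ at $(\overline{x},\overline{x})$. The main obstacle is to show that the smooth perturbation $Q$ preserves this exponent whenever $\theta\in[1/2,1)$; this follows from the standard KL-preservation-under-$C^{1,1}$-perturbation calculus, in which the $\theta$-power of the objective increment dominates the linear-order deformation of the subdifferential precisely because $2(1-\theta)\le 1$. With (a) and (b) in hand, \cite[Theorem 3.1]{YuLiPong21} transfers the KL exponent $\theta$ from $G_{\gamma,\mathcal{D}}$ at $(\overline{x},\overline{x})$ back to $F_{\gamma,\mathcal{D}}$ at $\overline{x}$, completing the proof.
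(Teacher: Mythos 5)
Your overall route is the same as the paper's: reduce to the critical case, realize $F_{\gamma,\mathcal{D}}$ as the inf-projection of $G_{\gamma,\mathcal{D}}$, invoke Lemma \ref{lemma-lbound} for level-boundedness, identify $Y(\overline{x})=\{\overline{x}\}$, and transfer the exponent through \cite[Theorem 3.1]{YuLiPong21}. The gap is in your step (b). The ``standard KL-preservation-under-$C^{1,1}$-perturbation calculus'' you appeal to is false under the hypotheses you list for $Q$, namely $Q(\overline{z})=0$, $\nabla Q(\overline{z})=0$, $|Q(z)|\le C\|z-\overline{z}\|^2$ and $\|\nabla Q(z)\|\le C'\|z-\overline{z}\|$, even if one adds $Q\ge 0$. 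Take $\widetilde F(x,y)=y^2$ on $\mathbb{R}^2$, which has KL exponent $1/2$ at the origin, and $Q(x,y)=x^4$, which satisfies every bound you state near the origin; then $\widetilde F+Q=y^2+x^4$ has KL exponent $3/4$ there, since along $(t,0)$ the gradient norm $4t^3$ is of order $(\text{increment})^{3/4}$. The heuristic ``the $\theta$-power of the increment dominates the linear deformation of the subdifferential'' fails because $Q$ itself enters the objective increment, and $Q^{\theta}$ need not be dominated by $\|\nabla Q\|$.

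What saves the actual perturbation $Q(x,y)=\mathcal{B}_{\phi_{\gamma,\mathcal{D}}}(y,x)$ --- and what your proposal never uses --- is the strong convexity of $\phi_{\gamma,\mathcal{D}}$ near $\overline{x}$, which is precisely where the restriction $\gamma<\underline{\eta}/L$ enters. It yields the \emph{lower} bound on the first component of $\partial G_{\gamma,\mathcal{D}}(x,y)$ in \eqref{Gxy}, $\|\nabla^2\phi_{\gamma,\mathcal{D}}(x)(x-y)\|\ge\alpha\|x-y\|$ with $\alpha=\gamma^{-1}\underline{\eta}-L>0$, together with the two-sided estimates $0\le Q(x,y)\le(\beta/2)\|x-y\|^2$ and $\|\nabla\phi_{\gamma,\mathcal{D}}(y)-\nabla\phi_{\gamma,\mathcal{D}}(x)\|\le\beta\|x-y\|$, all expressed in terms of $\|x-y\|$ rather than of the distance to $(\overline{x},\overline{x})$. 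From these one obtains ${\rm dist}(0,\partial G_{\gamma,\mathcal{D}}(x,y))\ge c\big(\|x-y\|+{\rm dist}(0,\partial F(y))\big)$: the $x$-component absorbs both the deformation of the $y$-subdifferential and, via $\|x-y\|^2\le\|x-y\|^{1/\theta}$ (this is where $\theta\ge 1/2$ is actually used), the contribution of $Q$ to the increment. This is the computation carried out around \eqref{ineq2-Ggam} in the paper. Your bounds, phrased only in terms of $\|x-\overline{x}\|+\|y-\overline{x}\|$ and lacking any lower bound on the $x$-partial of $Q$, do not support the conclusion. The remaining steps of your proposal (the noncritical case, the identification $Y(\overline{x})=\{\overline{x}\}$, the appeal to Lemma \ref{lemma-lbound}) are fine.
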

\begin{proof}
 Fix any $\mathcal{L}(\mathbb{X},\mathbb{X})\ni\mathcal{D}\succeq\underline{\eta}\mathcal{I}$ with some $\underline{\eta}>0$ and any $\gamma\in(0,\min\{\underline{\eta}\lambda_g,{\underline{\eta}}/{L}\})$. It suffices to consider that $\overline{x}$ is a critical point of $F_{\gamma,\mathcal{D}}$. Clearly, $\overline{x}\in{\rm dom}\,\partial F_{\gamma,\mathcal{D}}$. Since $F$ has the KL property of exponent $\theta$ at $\overline{x}$, by Definition \ref{KL-def}, there exist $\epsilon>0,\varpi>0$ and $c>0$ such that for all $y\in\mathbb{B}(\overline{x},\epsilon)\cap{\rm dom}\,\partial F\cap[F<F(\overline{x})+\varpi]$,
 \begin{equation}\label{ineq-FKL}
 {\rm dist}^{\frac{1}{\theta}}(0,\partial F(y))\ge c\big[F(y)-F(\overline{x})\big].
\end{equation}
By the definition of $G_{\gamma,\mathcal{D}}$ and \cite[Exercise 8.8]{RW98}, at any $(x,y)\in{\rm cl}\,({\rm dom}\,g)\times{\rm dom}\,g$, it holds that
\begin{equation}\label{Gxy}
 \partial G_{\gamma,\mathcal{D}}(x,y)
 =\left(\begin{matrix}
     \nabla^2\phi_{\gamma,\mathcal{D}}(x)(x-y)\\
     \nabla\!f(y)+\partial g(y)+\nabla\phi_{\gamma,\mathcal{D}}(y)-\!\nabla\phi_{\gamma,\mathcal{D}}(x)
    \end{matrix}\right)
 \end{equation}
 Since $f$ is twice continuously differentiable on $\mathcal{O}$, by using \cite[Theorem 9.2 \& 9.7]{RW98} and $\gamma^{-1}\underline{\eta}>L$, there exists $\alpha\in(0,\gamma^{-1}\underline{\eta}\!-\!L]$ such that for all $x\in\mathbb{B}(\overline{x},\varepsilon)$, $\nabla^2\phi_{\gamma,\mathcal{D}}(x)\succeq\alpha\mathcal{I}$. Let $\delta:=\min(\varepsilon,\epsilon)/2$. Note that $\phi_{\gamma,\mathcal{D}}$ is twice continuously differentiable on $\mathbb{B}(\overline{x},\varepsilon)$ and is strongly convex on $\mathbb{B}(\overline{x},\varepsilon)$ with modulus $\alpha$. Then, there exists $\beta\ge\alpha$ such that for any $x,y\in\mathbb{B}(\overline{x},\delta)$,
 \begin{equation}\label{ineq-gradf}
  \alpha\|x-y\|\le\|\nabla\!\phi_{\gamma,\mathcal{D}}(y)-\nabla\!\phi_{\gamma,\mathcal{D}}(x)\|\le \beta\|y-x\|.  
 \end{equation} 
 By Lemma \ref{lemma-lbound}, $G_{\gamma,\mathcal{D}}$ is level-bounded in $y$ locally uniformly in $x$. Then, to achieve the conclusion, it suffices to check that conditions (i)-(ii) of \cite[Theorem 3.1]{YuLiPong21} hold at $\overline{x}$ for $G_{\gamma,\mathcal{D}}$ and $F_{\gamma,\mathcal{D}}$. For condition (i) there, from the proof of Lemma \ref{lemma-lbound}, the function $G_{\gamma,\mathcal{D}}(\overline{x},\cdot)$ is coercive and lsc, so 
 \[   Y(\overline{x}):=\mathop{\arg\min}_{y\in\mathbb{X}}G_{\gamma,\mathcal{D}}(\overline{x},y)
 \]
 is nonempty and compact. By \cite[Theorem 10.1]{RW98}, $0\in\partial G_{\gamma,\mathcal{D}}(\overline{x},y^*)$ for any $y^*\in Y(\overline{x})$. Thus, condition (i) of \cite[Theorem 3.1]{YuLiPong21} is satisfied. Also, from \eqref{Gxy} we have $0=\nabla^2\phi_{\gamma,\mathcal{D}}(\overline{x})(y^*-\overline{x})$,  which by $\nabla^2\phi_{\gamma,\mathcal{D}}(\overline{x})\succeq \alpha\mathcal{I}$ implies that $y^*=\overline{x}$. This, by the arbitrariness of $y^*\in Y(\overline{x})$, means that $Y(\overline{x})=\{\overline{x}\}$. Next we verify that condition (ii) of \cite[Theorem 3.1]{YuLiPong21} holds, i.e., $G_{\gamma,\mathcal{D}}$ has the KL property of exponent $\theta$ at $(\overline{x},\overline{x})$. As $G_{\gamma,\mathcal{D}}(\overline{x},\overline{x})=F(\overline{x})$, it suffices to argue that there exists $c'>0$ such that for all $(x,y)\in\mathbb{B}((\overline{x},\overline{x}),\delta)\cap{\rm dom}\,\partial G_{\gamma,\mathcal{D}}\cap[F(\overline{x})<G_{\gamma,\mathcal{D}}<F(\overline{x})+\varpi]$, 
 \begin{equation}\label{ineq2-Ggam}
  {\rm dist}^{\frac{1}{\theta}}\big(0,\partial G_{\gamma,\mathcal{D}}(x,y)\big)\ge c'\big(G_{\gamma,\mathcal{D}}(x,y)-F(\overline{x})\big).
 \end{equation} 
 Fix any $(x,y)\in\mathbb{B}((\overline{x},\overline{x}),\delta)\cap{\rm dom}\,\partial G_{\gamma,\mathcal{D}}\cap[F(\overline{x})<G_{\gamma,\mathcal{D}}<F(\overline{x})+\varpi]$. From equation \eqref{Gxy} and \cite[Lemma 2.2 \& Lemma 3.1]{LiPong18}, there exists $C_0>0,\eta_1>0$ and $\eta_2\in(0,1)$ such that 
 \begin{align*}
  &{\rm dist}^{\frac{1}{\theta}}(0,\partial G_{\gamma,\mathcal{D}}(x,y))\\
  &\ge C_0\Big(\|\nabla^2\phi_{\gamma,\mathcal{D}}(x)(y-x)\|^{\frac{1}{\theta}}+\inf_{\xi\in\partial F(y)}\|\xi+\nabla\!\phi_{\gamma,\mathcal{D}}(y)-\nabla\phi_{\gamma,\mathcal{D}}(x)\|^{\frac{1}{\theta}}\Big)\\
  &\ge C_0\Big(\alpha^{\frac{1}{\theta}}\|(y-x)\|^{\frac{1}{\theta}}+(\alpha\beta^{-1})^{\frac{1}{\theta}}\inf_{\xi\in\partial F(y)}\|\xi+\nabla\!\phi_{\gamma,\mathcal{D}}(y)-\nabla\phi_{\gamma,\mathcal{D}}(x)\|^{\frac{1}{\theta}}\Big)\\
  &\ge C_0\Big(\alpha^{\frac{1}{\theta}}\|(y-x)\|^{\frac{1}{\theta}}+(\alpha\beta^{-1})^{\frac{1}{\theta}}\big[\inf_{\xi\in\partial F(y)}\eta_1\|\xi\|^{\frac{1}{\theta}}-\!\eta_2\|\nabla\!\phi_{\gamma,\mathcal{D}}(y)-\!\nabla\phi_{\gamma,\mathcal{D}}(x)\|^{\frac{1}{\theta}}\big]\Big)\\
  &\ge C_0\Big(\alpha^{\frac{1}{\theta}}\|(y-x)\|^{\frac{1}{\theta}}+(\alpha\beta^{-1})^{\frac{1}{\theta}}\inf_{\xi\in\partial F(y)}\eta_1\|\xi\|^{\frac{1}{\theta}}-\alpha^{\frac{1}{\theta}}\eta_2\|y-x\|^{\frac{1}{\theta}}\Big)\\
  &\ge C_1\Big(\inf_{\xi\in\partial F(y)}\|\xi\|^{\frac{1}{\theta}}+\|y-x\|^{\frac{1}{\theta}}\Big)\ {\rm with}\ C_1\!=C_0\min\big\{(1-\!\eta_2)\alpha^{\frac{1}{\theta}},\eta_1(\alpha\beta^{-1})^{\frac{1}{\theta}}\big\}
  \end{align*}
  where 
  the second inequality is due to $\nabla^2\phi_{\gamma,\mathcal{D}}(x)\succeq \alpha\mathcal{I}$ for all $x\in\mathbb{B}(\overline{x},\delta)$ and $\alpha\beta^{-1}\le 1$, and the fourth one is due to the second inequality of \eqref{ineq-gradf}. If necessary by shrinking $\delta$, we have $2\beta\delta^2<1$ and $0\le\mathcal{B}_{\phi_{\gamma,\mathcal{D}}}(y,x)=\phi_{\gamma,\mathcal{D}}(y)-\phi_{\gamma,\mathcal{D}}(x)-\langle\nabla\!\phi_{\gamma,\mathcal{D}}(x),y-x\rangle\le(\beta/2)\|y-x\|^2<1$. 
 Thus, 
 \begin{align*}\label{final-ineq}
 {\rm dist}^{\frac{1}{\theta}}(0,\partial G_{\gamma,\mathcal{D}}(x,y))
 &\ge C_1\Big(\inf_{\xi\in\partial F(y)}\|\xi\|^{\frac{1}{\theta}}+(2\beta^{-1}\mathcal{B}_{\phi_{\gamma,\mathcal{D}}}(y,x))^{\frac{1}{2\theta}}\Big)\nonumber\\
 &\ge C_1\Big(\inf_{\xi\in\partial F(y)}\|\xi\|^{\frac{1}{\theta}}+(2\beta^{-1})^{\frac{1}{2\theta}}\mathcal{B}_{\phi_{\gamma,\mathcal{D}}}(y,x)\Big)\nonumber\\
 &=C_1c\Big(\inf_{\xi\in\partial F(y)}c^{-1}\|\xi\|^{\frac{1}{\theta}}+c^{-1}(2\beta^{-1})^{\frac{1}{2\theta}}\mathcal{B}_{\phi_{\gamma,\mathcal{D}}}(y,x)\Big)\nonumber\\
 &\ge C_2\Big(\inf_{\xi\in\partial F(y)}c^{-1}\|\xi\|^{\frac{1}{\theta}}+\mathcal{B}_{\phi_{\gamma,\mathcal{D}}}(y,x)\Big)\nonumber\\
 &\ge C_2\Big(F(y)-F(\overline{x})+\mathcal{B}_{\phi_{\gamma,\mathcal{D}}}(y,x)\Big)\nonumber\\
 &=C_2(G_{\gamma,\mathcal{D}}(x,y)-F(\overline{x}))
 \end{align*}
 with $C_2:=C_1c\min(1,(2\beta^{-1})^{\frac{1}{2\theta}}c^{-1})$, where the second inequality is due to $\mathcal{B}_{\phi_{\gamma,\mathcal{D}}}(y,x)<1$ and $1/(2\theta)\le1$, the fourth one follows from \eqref{ineq-FKL} because $y\in\mathbb{B}(\overline{x},\epsilon)\cap{\rm dom}\,\partial F$ and $F(y)\le G_{\gamma,\mathcal{D}}(x,y)<F(\overline{x})+\varpi$, and the second equality holds because $G_{\gamma,\mathcal{D}}(x,y)=F(y)+\mathcal{B}_{\phi_{\gamma,\mathcal{D}}}(y,x)$. 
 The above inequality shows that $G_{\gamma,\mathcal{D}}$ has the KL property of exponent $\theta$ at $(\overline{x},\overline{x})$. The desired conclusion then follows by \cite[Theorem 3.1]{YuLiPong21}. The proof is completed.\qed
\end{proof}

 Recall that $F_{\gamma}=F_{\gamma,\mathcal{I}}$. From Proposition \ref{KL-FBE}, we immediately obtain the following conclusion.  
 \begin{corollary}\label{KL-FBE1}
 Consider any $\overline{x}\in{\rm cl}\,({\rm dom}\,g)$. Let $\varepsilon>0$ be such that $\mathbb{B}(\overline{x},\varepsilon)\subset\mathcal{O}$ and write $L:=\max_{x\in\mathbb{B}(\overline{x},\varepsilon)}{\rm lip}\nabla\!f(x)$. Suppose that $g$ is prox-bounded with threshold $\lambda_g\in(0,\infty]$, and that $F$ has the KL property of exponent $\theta\in[{1}/{2},1)$ at $\overline{x}$. Then, for any $\gamma\in(0,\min\{\lambda_g,{1}/{L}\})$, the function $F_{\gamma}$ has the KL property of exponent $\theta$ at $\overline{x}$.
\end{corollary}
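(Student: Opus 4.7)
The plan is to observe that this corollary is the specialization of Proposition \ref{KL-FBE} to the case $\mathcal{D}=\mathcal{I}$, and therefore requires almost no new work. I would simply set $\mathcal{D}=\mathcal{I}$ and track how the parameter constraints in Proposition \ref{KL-FBE} collapse. Since $\mathcal{I}\succeq \underline{\eta}\mathcal{I}$ for $\underline{\eta}=1$, the standing assumption $\mathcal{D}\succeq\underline{\eta}\mathcal{I}$ holds with $\underline{\eta}=1$, and the range $\gamma\in(0,\min\{\underline{\eta}\lambda_g,\underline{\eta}/L\})$ becomes exactly $\gamma\in(0,\min\{\lambda_g,1/L\})$, which matches the hypothesis of the corollary.

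Next I would check that the remaining hypotheses of Proposition \ref{KL-FBE} are inherited verbatim: $\overline{x}\in{\rm cl}\,({\rm dom}\,g)$ is assumed; the neighborhood $\mathbb{B}(\overline{x},\varepsilon)\subset\mathcal{O}$ and the local Lipschitz modulus $L=\max_{x\in\mathbb{B}(\overline{x},\varepsilon)}{\rm lip}\nabla\!f(x)$ are defined identically; $g$ is prox-bounded with threshold $\lambda_g$; and $F$ has the KL property of exponent $\theta\in[1/2,1)$ at $\overline{x}$. Finally, by the notational convention introduced just after \eqref{Ggam-def} that $F_{\gamma,\mathcal{I}}$ is written simply as $F_{\gamma}$, Proposition \ref{KL-FBE} yields that $F_{\gamma}=F_{\gamma,\mathcal{I}}$ has the KL property of exponent $\theta$ at $\overline{x}$, which is the desired conclusion.

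There is essentially no obstacle here beyond bookkeeping the parameter substitutions; the whole content of the corollary is already contained in the preceding proposition, and the only thing to verify is that the intersected range for $\gamma$ is nonempty and that the notational identification $F_{\gamma}=F_{\gamma,\mathcal{I}}$ is the one fixed earlier. Accordingly, the proof would consist of a single sentence invoking Proposition \ref{KL-FBE} with $\mathcal{D}=\mathcal{I}$ and $\underline{\eta}=1$.
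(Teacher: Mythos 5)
Your proposal is correct and coincides with the paper's treatment: the corollary is stated as an immediate consequence of Proposition \ref{KL-FBE}, obtained precisely by taking $\mathcal{D}=\mathcal{I}$ with $\underline{\eta}=1$ and recalling the convention $F_{\gamma}=F_{\gamma,\mathcal{I}}$. Nothing further is needed.
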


 It is worth emphasizing that the conclusions of Proposition \ref{KL-FBE} and Corollary \ref{KL-FBE1} does not require the convexity of $g$. By Corollary \ref{KL-FBE1}, if $f$ is twice continuously differentiable on $\mathcal{O}$ with $\nabla\!f$ being strictly continuous on the set $S^*$ and $F$ is a KL function of exponent $\theta\in[1/2,1)$, then $F_{\gamma}$ is a KL function of exponent $\theta$. Such a result was ever mentioned in \cite[Remark 5.1]{YuLiPong21} by requiring that $f$ is twice continuously differentiable with Lipschitz gradient $\nabla\!f$ on the whole space $\mathbb{X}$ except the KL assumption on $F$ with exponent $\theta$.
\section{Line-search based VMiPG method}\label{sec3}

Our line-search based VMiPG method seeks at the $k$th iteration an inexact minimizer $y^k$ of the strongly convex subproblem \eqref{subprobx}, and the inexactness of $y^k$ means that
\begin{equation}\label{inexact-criterion}
 \Theta_k(y^k)<\Theta_k(x^k)\ \ {\rm and}\ \ \Theta_k(y^k)-\Theta_k(\overline{x}^k)\le\varepsilon_k\|y^k\!-x^k\|^2 :=\varsigma_k,
\end{equation}
where $\{\varepsilon_k\}_{k\in\mathbb{N}}$ is a pre-given bounded positive real number sequence. Although the optimal value $\Theta_k(\overline{x}^k)$ of \eqref{subprobx} is unknown, as will be shown in Section \ref{sec5.2},  one can achieve a lower bound $\Theta_k^{\rm LB}$ for it by solving the dual of \eqref{subprobx} so that any $y^k$ with $\Theta_k(y^k)-\Theta_k^{\rm LB}\le\varsigma_k$ satisfies the second inequality of \eqref{inexact-criterion}. Unlike the inexactness criteria used in \cite{Yue19,Mordu20,LiuPan23,Kanzow21},  criterion \eqref{inexact-criterion} does not involve the KKT residual $r(x^k)$ of problem \eqref{prob} at $x^k$ or that of subproblem \eqref{subprobx} at $y^k$, defined by 
\begin{equation}\label{rk-fun}
	r_k(y)\!:=\|R_k(y)\|\ \ {\rm with}\ R_k(y)\!:=y\!-\!{\rm prox}_{ g}(y\!-\!\nabla\!f(x^k)\!-\!\mathcal{G}_k(y\!-\!x^k))
	\ \ {\rm for}\ y\in\mathbb{X}.
\end{equation}
This implies that the inexactness criterion \eqref{inexact-criterion} is more practical because both $r(x^k)$ and $r_k(x^k)$ are unavailable in practical computation when the proximal mapping of $g$ has no closed form. It is worth emphasizing that similar inexactness criteria applicable to the scenario where $g$ has no closed-form proximal mapping have been proposed in \cite{Bonettini16,Bonettini17,Bonettini20,Lee19}.

With an inexact minimizer $y^k$, by following the same line as in \cite{Bonettini20}, the Armijo line-search is performed along the direction $d^k=y^k-x^k$ to capture a step-size $\alpha_k>0$ so that the objective value of problem \eqref{prob} can gain a sufficient decrease. The algorithm then steps into the next iteration with $x^{k+1}\!:=y^k$ or  $x^{k+1}\!:=x^k\!+\!\alpha_kd^k$, depending on which objective value is the lowest. The iterations of our line-search based VMiPG method are described as follows.  
\begin{algorithm}[H]
\caption{\label{lsVMiPG}{\bf(line-search based VMiPG method)}}
\textbf{Input:}\ parameters $\underline{\mu}>0,\beta\in(0,1),\,\sigma\in\big(0,\frac{1}{2}\min\{1,\underline{\mu}\}\big)$, a bounded sequence $\{\varepsilon_k\}_{k\in\mathbb{N}}\subset\mathbb{R}_{++}$, and an initial $x^0\in{\rm dom}\,g$.
	
 \medskip
 \noindent
 \textbf{For} $k=0,1,2, \ldots$
 \textbf{do}
 \begin{enumerate}			
 \item \label{dk-step} Select a self-adjoint linear opeator $\mathcal{G}_k\!:\mathbb{X}\to\mathbb{X}$ with $\mathcal{G}_k\!\succeq\!\underline{\mu}\mathcal{I}$. Solve subproblem \eqref{subprobx} associated with $\mathcal{G}_k$ to seek an inexact minimizer $y^k$ satisfying criterion \eqref{inexact-criterion}.
		
 \item \label{ls-step} Set $d^k\!:=y^k\!-\!x^k$. Seek the smallest $m_k$ among all non-negative integers $m$ such that
		\begin{equation}\label{lsearch}
			F(x^k)-F(x^k\!+\!\beta^md^k)\ge\sigma\beta^m\|d^k\|^2.
		\end{equation}
		\item Set $\alpha_k:=\beta^{m_k}$ and
		\begin{equation}\label{new-iter}
			x^{k+1}:=\left\{\begin{array}{cl}
				y^k &{\rm if}\ F(y^k)<F(x^k\!+\!\alpha_kd^k),\\
				x^k\!+\!\alpha_kd^k &{\rm  otherwise}.
			\end{array}\right.
		\end{equation}
	\end{enumerate}
	\textbf{end (For)}
\end{algorithm}
\begin{remark}\label{remark-Alg}
 {\bf(a)} For the sequence $\{\varepsilon_k\}_{k\in\mathbb{N}}$, we assume that there is $\overline{k}\in\mathbb{N}$ such that $\varepsilon_k\in(0,{\underline{\mu}}/{10}]$ for all $k\ge\overline{k}$. Clearly, one can choose $\{\varepsilon_k\}_{k\in\mathbb{N}}$ to be any positive number sequence tending to $0$. 

 \noindent
 {\bf(b)} Algorithm \ref{lsVMiPG} can be regarded as a variant of the variable metric inexact line-search based algorithm (VMILA), which has been formerly introduced in \cite{Bonettini16,Bonettini17} and further treated in \cite{Bonettini20,Bonettini21,Lee19}, and its difference from VMILA lies in the second condition of the inexactness criterion \eqref{inexact-criterion} and the decrease measure of the line-search condition \eqref{lsearch}. The VMILA in \cite{Bonettini20} adopts the inexactness criterion \eqref{Binexact} with $\{\tau_k\}_{k\in\mathbb{N}}$ satisfying $\sum_{k=1}^\infty\sqrt{\tau_k}<\infty$, which controls the term $\Theta_k(y^k)-\Theta_k(\overline{x}^k)$ by $\frac{\tau_k}{2}(\Theta_k(x^k)-\Theta_k(y^k))$, while our inexactness criterion \eqref{inexact-criterion} controls the term $\Theta_k(y^k)-\Theta_k(\overline{x}^k)$ by $\varepsilon_k\|y^k-x^k\|^2$ with $\varepsilon_k\in(0,\underline{\mu}/10]$ for all $k\ge\overline{k}$.
 When $y^k$ is produced by our inexactness criterion, $\Theta_k(y^k)-\Theta_k(\overline{x}^k)\le\varepsilon_k\|y^k-x^k\|^2=\varsigma_k$ which, by \cite[Theorem 4 (ii)]{Bonettini20} with $\alpha=1,\mu=\underline{\mu}^{-1}$ and $\epsilon=\varsigma_k$, means that $(\underline{\mu}/{4}-2\varepsilon_k)\|y^k\!-x^k\|^2\le (\Theta_k(x^k)-\Theta_k(y^k))$. Thus, if our parameter sequence $\{\varepsilon_k\}_{k\in\mathbb{N}}$ is chosen to be $\{\frac{\tau_k\underline{\mu}}{8(1+\tau_k)}\}_{k\in\mathbb{N}}$, the sequence $\{y^k\}_{k\in\mathbb{N}}$ yielded by our inexactness criterion satisfies the one adopted in \cite{Bonettini17,Bonettini20}. However, as mentioned in part (a), our parameter $\varepsilon_k$ is only required to be less than $\underline{\mu}/10$ when $k$ is large enough, rather than to be square-root summable. The line-search criterion of \cite{Bonettini16,Bonettini17,Bonettini20} uses $\sigma\beta^m[\Theta_k(y^k)-\Theta_k(x^k)]$ to control the decrease of $F$ at $x^k$, the line-search criterion of \cite{Lee19} measures this decrease by  $\sigma\beta^m(\nabla\!f(x^k)^{\top}d^k+g(x^k\!+\!d^k)\!-g(x^k))$, while our line-search condition \eqref{lsearch} measures this decrease in terms of $\sigma\beta^m\|d^k\|^2$ to achieve the linear convergence rate of $\{x^k\}_{k\in\mathbb{N}}$. 
  The parameter $\sigma$ in our line-search criterion is more restricted than the one in the line-search criterion of  \cite{Bonettini16,Bonettini17,Bonettini20,Lee19}  to capture more decrease. Note that there is no implication relation between our inexactness criterion \eqref{inexact-criterion} and the distance-type one in \cite{LiuPan23}.	
  
  \noindent
  {\bf(c)} When $f$ has the composite structure as in \eqref{fg-fun}, one can construct a linear operator $\mathcal{G}_k\!:\mathbb{X}\to\mathbb{X}$ satisfying $\mathcal{G}_k\succeq\underline{\mu}\mathcal{I}$ via $\mathcal{G}_k=\mathcal{G}(x^k)$ with the mapping  $\mathcal{G}\!:\mathbb{X}\to L(\mathbb{X},\mathbb{X})$ defined by 
  \begin{equation}\label{MGmap}
   \mathcal{G}(x):=\mathcal{A}^*{\rm Diag}\big(\max(0,\lambda(\nabla^2\vartheta(\mathcal{A}x)))\big)\mathcal{A}+\underline{\mu}\mathcal{I}\quad\ \forall x\in\mathbb{X}.
   \end{equation}
  Clearly, when $\vartheta$ is separable, it is very cheap to obtain such $\mathcal{G}_k$. In fact, by the twice continuous differentiability of $f$, it is easy to check that $\Theta_k$ is now a local majorization of $F$ at $x^k$. 
	
 \noindent
 {\bf(d)} As will be shown in Lemma \ref{lemma-rk} (ii), $r(x^k)=0$ whenever $d^k=0$. This shows that one can use $\|d^k\|\le\epsilon^*$ for a tolerance $\epsilon^*>0$ as a stopping condition. 
\end{remark} 

Let $\ell_k\!:\mathbb{X}\to\overline{\mathbb{R}}$ represent the partial first-order approximation of $F$ at $x^k$, that is, 
\begin{equation}\label{lk-fun}
 \ell_k(x):=f(x^k)+\langle\nabla\!f(x^k),x\!-\!x^k\rangle+g(x)\quad\ \forall x\in\mathbb{X}.
\end{equation}
By the expression of $\Theta_k$, we have $\Theta_k(x)=\ell_k(x)+\frac{1}{2}(x-x^k)^{\top}\mathcal{G}_k(x-x^k)$. Hence,
\begin{equation}\label{Thetak-ellk}
 \Theta_k(x)-\Theta_k(x^k)=\ell_k(x)-\ell_k(x^k)+\frac{1}{2}(x-x^k)^{\top}\mathcal{G}_k(x-x^k)\quad\ \forall x\in\mathbb{X}.
\end{equation}
This, together with $\mathcal{G}_k\succeq\underline{\mu}\mathcal{I}$ and $\Theta_k(y^k)<\Theta_k(x^k)$, implies that
\begin{equation}\label{ineq-lk}
 \ell_k(x^k)-\ell_k(y^k)>\frac{1}{2}\langle y^k-x^k,\mathcal{G}_k(y^k\!-\!x^k)\rangle\ge(\underline{\mu}/2)\|y^k\!-\!x^k\|^2.
\end{equation}
By using equations \eqref{Thetak-ellk}-\eqref{ineq-lk} and Assumption \ref{ass0}, we can prove that Algorithm \ref{lsVMiPG} is well defined. 
\begin{lemma}\label{well-defined}
 For every iteration of Algorithm \ref{lsVMiPG}, criterion \eqref{inexact-criterion} is satisfied by any $y\in{\rm dom}\,g$ sufficiently close to the exact solution $\overline{x}^{k}$ of \eqref{subprobx}, and there exists an integer $m_k>0$ such that the descent condition in \eqref{lsearch} is satisfied.
\end{lemma}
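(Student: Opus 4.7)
The plan is to verify the two claims separately. The first relies on strong convexity of $\Theta_k$ together with its continuity on $\mathrm{dom}\,g$; the second on a first-order expansion of $F$ along $d^k$ combined with the lower bound \eqref{ineq-lk}.

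For the feasibility of \eqref{inexact-criterion}, I would first dispose of the degenerate case $\overline{x}^k=x^k$: by strong convexity of $\Theta_k$ this can occur only when $x^k$ is a stationary point of $F$, in which case $d^k=0$ and the algorithm terminates by Remark~\ref{remark-Alg}(d). Assume therefore $\rho:=\|\overline{x}^k-x^k\|>0$ and set $\delta:=\Theta_k(x^k)-\Theta_k(\overline{x}^k)>0$. Since $\Theta_k$ is continuous on $\mathrm{dom}\,g$ by Assumption~\ref{ass0}(i), there is a neighborhood $\mathcal{N}\subset\mathbb{B}(\overline{x}^k,\rho/2)$ of $\overline{x}^k$ on which $\Theta_k(y)-\Theta_k(\overline{x}^k)<\min(\delta,\varepsilon_k\rho^2/4)$; for any $y\in\mathcal{N}\cap\mathrm{dom}\,g$ the triangle inequality yields $\|y-x^k\|\ge\rho/2$, and these bounds together give $\Theta_k(y)<\Theta_k(x^k)$ and $\Theta_k(y)-\Theta_k(\overline{x}^k)\le\varepsilon_k\|y-x^k\|^2$, which are exactly the two conditions in \eqref{inexact-criterion}.

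For the Armijo line-search, once $y^k$ is an inexact minimizer satisfying \eqref{inexact-criterion}, the definition \eqref{lk-fun} of $\ell_k$ together with \eqref{ineq-lk} gives the key decrease estimate $g(x^k)-g(y^k)-\langle\nabla\!f(x^k),d^k\rangle>(\underline{\mu}/2)\|d^k\|^2$, so in particular $d^k\neq 0$. Because $\mathrm{dom}\,g$ is convex the whole segment $[x^k,y^k]$ lies in $\mathrm{cl}(\mathrm{dom}\,g)\subset\mathcal{O}$, so for every $\alpha\in(0,1]$ convexity of $g$ yields $g(x^k)-g(x^k+\alpha d^k)\ge\alpha[g(x^k)-g(y^k)]$, while the $C^2$ smoothness of $f$ on $\mathcal{O}$ gives $f(x^k)-f(x^k+\alpha d^k)=-\alpha\langle\nabla\!f(x^k),d^k\rangle+o(\alpha)$. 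Adding these bounds and applying the key decrease estimate yields
\begin{equation*}
F(x^k)-F(x^k+\alpha d^k)\ge\alpha(\underline{\mu}/2)\|d^k\|^2-o(\alpha),
\end{equation*}
and since the parameter choice $\sigma<\tfrac{1}{2}\min\{1,\underline{\mu}\}$ forces $\underline{\mu}/2-\sigma>0$, the inequality \eqref{lsearch} is met by every sufficiently small $\alpha=\beta^m$, so a finite smallest $m_k$ exists.

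The proof is mostly bookkeeping; the only step worth flagging is the need to keep $x^k+\alpha d^k$ inside the open set $\mathcal{O}$ where $f$ is smooth, which I handle by invoking convexity of $\mathrm{dom}\,g$ together with Assumption~\ref{ass0}(ii). The degenerate case $\overline{x}^k=x^k$ in the first claim coincides with the algorithmic stopping condition $d^k=0$, so it is naturally excluded along any non-terminating run.
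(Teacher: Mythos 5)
Your proposal is correct and follows essentially the same route as the paper: both parts rest on the continuity of $\Theta_k$ relative to ${\rm dom}\,g$ near the unique minimizer $\overline{x}^k$ (with the strict gap $\Theta_k(x^k)>\Theta_k(\overline{x}^k)$ when $x^k\neq\overline{x}^k$) and on the decrease estimate \eqref{ineq-lk}, with your convexity inequality $g(x^k+\alpha d^k)\le(1-\alpha)g(x^k)+\alpha g(y^k)$ being just the segment-wise form of the directional-derivative bound $g'(x^k;d^k)\le g(y^k)-g(x^k)$ used in the paper. The bookkeeping differs slightly (explicit neighborhood bounds versus the auxiliary function $h$), but the substance is identical.
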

\begin{proof}
 Fix any $k\in\mathbb{N}$. We may assume that $x^k\ne \overline{x}^k$ (if not, $x^k$ is a stationary point and Algorithm \ref{lsVMiPG} will stop at $x^k$). By Assumption \ref{ass0} (i)-(ii), the function $h(\cdot):=\Theta_k(\cdot)-\Theta_k(\overline{x}^k)-\varepsilon_k\|\cdot-x^k\|^2$ is continuous relative to ${\rm dom}\,g$. Along with $h(\overline{x}^k)=-\varepsilon_k\|\overline{x}^k-x^k\|^2<0$, we conclude that the second condition of \eqref{inexact-criterion} is satisfied by any $y\in {\rm dom}\,g$ sufficiently close to $\overline{x}^k$. In addition, since $\overline{x}^k$ is the unique optimal solution of \eqref{subprobx} and $\Theta_k(\cdot)$ is strongly convex with modulus $\underline{\mu}$, we have 
 \[
  \Theta_k(\overline{x}^k)+\underline{\mu}/2\|\overline{x}^k-x^k\|^2\le\Theta_k(x^k).
 \]
 Recall that $\Theta_k(\cdot)+\underline{\mu}/2\|\cdot-x^k\|^2$ is continuous relative to ${\rm dom}\,g$ by Assumption \ref{ass0} (i), the above inequality means that for any $y\in{\rm dom}\,g$ sufficiently close to $\overline{x}^k$, $\Theta_k(y)<\Theta_k(x^k)$. The two sides show that criterion \eqref{inexact-criterion} is satisfied by any $y\in{\rm dom}\,g$ sufficiently close to $\overline{x}^k$.
	
 For the second part, we only need to consider that $d^k\ne 0$ (if not, Algorithm \ref{lsVMiPG} stops at $x^k$ by Remark \ref{remark-Alg} (d)). From the directional differentiability of $g$ and Assumption \ref{ass0} (ii),
 \begin{align*}
  F'(x^k;d^k)&=\langle\nabla\!f(x^k),d^k\rangle+g'(x^k;d^k)
		\le\langle\nabla\!f(x^k),d^k\rangle+g(y^k)-g(x^k)\\
		&=\ell_k(y^k)-\ell_k(x^k)\le-(\underline{\mu}/{2})\|y^k\!-\!x^k\|^2
		=-(\underline{\mu}/{2})\|d^k\|^2<0,
 \end{align*}
 where the first inequality is due to \cite[Theorem 23.1]{Roc70}, and the second one is due to \eqref{ineq-lk}. Together with the definition of directional derivative,
 \[
  F(x^k+t d^k)-F(x^k)\le-t\big[(\underline{\mu}/{2})\|d^k\|^2-o(t)\big].
 \]
 This implies that the line-search step in \eqref{lsearch} is well defined. The proof is completed.  \qed
\end{proof}
\begin{remark}\label{objk-remark}
 Lemma \ref{well-defined} guarantees that Algorithm \ref{lsVMiPG} is well defined. From the iteration steps of Algorithm \ref{lsVMiPG}, $\{x^k\}_{k\in\mathbb{N}}\subset{\rm dom}\,g$ and $\{y^k\}_{k\in\mathbb{N}}\subset{\rm dom}\,g$. In addition, for each $k\in\mathbb{N}$, from \eqref{lsearch}-\eqref{new-iter}, $F(x^{k+1})< F(x^k)$, which along with Assumption \ref{ass0} (iii) implies that $\{F(x^k)\}_{k\in\mathbb{N}}$ is convergent.
\end{remark}

The rest of this section focuses on the properties of the sequence generated by Algorithm \ref{lsVMiPG}. The following lemma states the relation between $\|d^k\|$ and $r_k(y^k)$ (or $r(x^k)$). 
\begin{lemma}\label{lemma-rk}
 Let $\{(x^k,y^k)\}_{k\in\mathbb{N}}$ be the sequence given by Algorithm \ref{lsVMiPG}. Then, for each $k\in\mathbb{N}$,  
 \begin{description}
 \item[(i)] $r_k(y^k)\leq\sqrt{2}\underline{\mu}^{-1/2}(2+\!\|\mathcal{G}_k\|)\sqrt{\varepsilon_k}\|d^k\|$; 
			
 \item[(ii)] $r(x^k)\leq(2\!+\!\|\mathcal{G}_k\|)(1+\!\sqrt{2}\underline{\mu}^{-1/2}\sqrt{\varepsilon_k})\|d^k\|$.
			
 \end{description}
 \end{lemma}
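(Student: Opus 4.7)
The plan is to exploit two facts: first, the strong convexity of $\Theta_k$ with modulus $\underline{\mu}$ turns the inexactness budget $\varepsilon_k\|d^k\|^2$ into a quantitative bound on $\|y^k-\overline{x}^k\|$; second, the exact minimizer $\overline{x}^k$ is a fixed point of the map defining $R_k$, so $R_k(\overline{x}^k)=0$, and nonexpansiveness of $\mathrm{prox}_g$ then transfers the distance $\|y^k-\overline{x}^k\|$ to a bound on $r_k(y^k)$. Part (ii) is then obtained by inserting suitable intermediate points in $R(x^k)$ and comparing with $R_k(y^k)$.

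For part (i), I would first note that because $\mathcal{G}_k\succeq\underline{\mu}\mathcal{I}$, the function $\Theta_k$ is strongly convex with modulus $\underline{\mu}$, so
\[
\tfrac{\underline{\mu}}{2}\|y^k-\overline{x}^k\|^2\le\Theta_k(y^k)-\Theta_k(\overline{x}^k)\le\varepsilon_k\|d^k\|^2,
\]
by the second condition in \eqref{inexact-criterion}. Hence $\|y^k-\overline{x}^k\|\le\sqrt{2\varepsilon_k/\underline{\mu}}\,\|d^k\|$. Next, the optimality condition for $\overline{x}^k$ reads $0\in\nabla f(x^k)+\mathcal{G}_k(\overline{x}^k-x^k)+\partial g(\overline{x}^k)$, which rearranges to $\overline{x}^k=\mathrm{prox}_g(\overline{x}^k-\nabla f(x^k)-\mathcal{G}_k(\overline{x}^k-x^k))$, i.e.\ $R_k(\overline{x}^k)=0$. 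Writing $R_k(y^k)=R_k(y^k)-R_k(\overline{x}^k)$, a triangle inequality together with the $1$-Lipschitz property of $\mathrm{prox}_g$ yields
\[
\|R_k(y^k)\|\le\|y^k-\overline{x}^k\|+\|(\mathcal{I}-\mathcal{G}_k)(y^k-\overline{x}^k)\|\le(2+\|\mathcal{G}_k\|)\|y^k-\overline{x}^k\|,
\]
and combining with the previous bound gives exactly $r_k(y^k)\le\sqrt{2}\,\underline{\mu}^{-1/2}(2+\|\mathcal{G}_k\|)\sqrt{\varepsilon_k}\,\|d^k\|$.

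For part (ii), I would split $R(x^k)$ through $y^k$:
\[
\|R(x^k)\|\le\|x^k-y^k\|+\|y^k-\mathrm{prox}_g(x^k-\nabla f(x^k))\|=\|d^k\|+\|y^k-\mathrm{prox}_g(x^k-\nabla f(x^k))\|.
\]
Adding and subtracting $\mathrm{prox}_g(y^k-\nabla f(x^k)-\mathcal{G}_k(y^k-x^k))$ inside the second norm and using again nonexpansiveness of $\mathrm{prox}_g$, the second term is at most $r_k(y^k)+\|(\mathcal{I}-\mathcal{G}_k)d^k\|\le r_k(y^k)+(1+\|\mathcal{G}_k\|)\|d^k\|$. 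Assembling and plugging in part (i) then produces the stated constant $(2+\|\mathcal{G}_k\|)(1+\sqrt{2}\,\underline{\mu}^{-1/2}\sqrt{\varepsilon_k})$.

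There is no genuine obstacle: both bounds reduce to (a) converting the value-based inexactness criterion \eqref{inexact-criterion} into an iterate-distance estimate via strong convexity, and (b) a standard nonexpansive-operator comparison between the natural residual of \eqref{prob} and the residual of the subproblem \eqref{subprobx}. Once one observes the fixed-point identity $R_k(\overline{x}^k)=0$, the estimates are a one-line application of the triangle inequality and firm nonexpansiveness of $\mathrm{prox}_g$. As a side consequence, part (ii) immediately implies the claim in Remark \ref{remark-Alg}(d) that $r(x^k)=0$ whenever $d^k=0$. \qed
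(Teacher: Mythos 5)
Your proof is correct and follows essentially the same route as the paper: strong convexity of $\Theta_k$ plus the inexactness criterion \eqref{inexact-criterion} to bound $\|y^k-\overline{x}^k\|$, the fixed-point identity $R_k(\overline{x}^k)=0$ with nonexpansiveness of $\mathrm{prox}_g$ for part (i), and the same triangle-inequality comparison $r(x^k)\le r_k(y^k)+(2+\|\mathcal{G}_k\|)\|d^k\|$ for part (ii). The only cosmetic difference is that you make the intermediate points in part (ii) explicit, whereas the paper states the comparison \eqref{rrk} directly; plain nonexpansiveness (not firm nonexpansiveness) is all that is actually used.
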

 \begin{proof}
 {\bf(i)} As $\overline{x}^k$ is the unique minimizer of subproblem \eqref{subprobx}, we have $R_k(\overline{x}^k)=0$. From $\mathcal{G}_k\succeq\underline{\mu}\mathcal{I}$, the function $\Theta_k$ is strongly convex with modulus $\underline{\mu}$, which implies that 
 \begin{equation}\label{sconvex-Thetak}
  (\underline{\mu}/2)\|y^k-\overline{x}^k\|^2\le\Theta_k(y^k)-\Theta_k(\overline{x}^k).   
 \end{equation}
 Using this inequality and the Lipschitz continuity of ${\rm prox}_{g}$ with modulus $1$ yields that 

 \begin{align*}
 r_k(y^k)&=\|y^k\!-\!{\rm prox}_{ g}(y^k\!-\!\nabla\!f(x^k)\!-\!\mathcal{G}_k(y^k\!-\!x^k))			\!-\!\overline{x}^k\!+\!{\rm prox}_{ g}(\overline{x}^k\!-\!\nabla\!f(x^k)\!-\!\mathcal{G}_k(\overline{x}^k\!-\!x^k))\|\nonumber\\
       &\le\|y^k-\overline{x}^k\|+\|{\rm prox}_{ g}(\overline{x}^k\!-\!\nabla\!f(x^k)\!-\!\mathcal{G}_k(\overline{x}^k\!-\!x^k))-\!{\rm prox}_{ g}(y^k\!-\!\nabla\!f(x^k)\!-\!\mathcal{G}_k(y^k\!-\!x^k))\|\nonumber\\
    &\le (2+\|\mathcal{G}_k\|)\|y^k-\overline{x}^k\|\le\sqrt{2}\underline{\mu}^{-1/2}(2+\!\|\mathcal{G}_k\|)\sqrt{\Theta_k(y^k)-\Theta_k(\overline{x}^k)}\nonumber\\
   &\le\sqrt{2}\underline{\mu}^{-1/2}(2+\!\|\mathcal{G}_k\|)\sqrt{\varepsilon_k}\|d^k\|
  \end{align*}
 where the first inequality is due to the triangle inequality, the third one is implied by \eqref{sconvex-Thetak}, and the last one is obtained by using the second condition in \eqref{inexact-criterion}. 
		
 \noindent
 {\bf(ii)} By the expression of the residual functions $r$ and $r_k$, the global Lipschitz continuity of ${\rm prox}_{g}$ with modulus $1$ along with the triangle inequality implies that for each $k\in\mathbb{N}$,
 \begin{equation}\label{rrk}
  r(x^k)-r_k(y^k)\leq(2+\|\mathcal{G}_k\|)\|d^k\|.
  \end{equation}
  Together with part (i), we immediately obtain the desired inequality. \qed
		
 \end{proof}

 Next we show that every cluster point of $\{x^k\}_{k\in\mathbb{N}}$ is a stationary point of \eqref{prob} and $F$ keeps unchanged on the set of cluster points of $\{x^k\}_{k\in\mathbb{N}}$. Such a result was achieved in \cite[Theorem 8 (i)]{Bonettini20} by requiring the Lipschitz continuity of $\nabla\!f$ on ${\rm dom}\,g$. Here we remove this restriction.  
\begin{proposition}\label{prop-xk}
 Let $\{(x^k,y^k)\}_{k\in\mathbb{N}}$ be the sequence generated by Algorithm \ref{lsVMiPG}. Suppose that the sequence $\{x^k\}_{k\in\mathbb{N}}$ is bounded, and that there is $\overline{\mu}>\underline{\mu}$ such that $\|\mathcal{G}_k\|\le\overline{\mu}$ for all $k\in\mathbb{N}$. Then, 
 \begin{description}
  \item[(i)] the set of accumulation points of $\{x^k\}_{k\in\mathbb{N}}$, denoted by $\omega(x^0)$, is a nonempty compact set; 

  \item[(ii)] the sequence $\{d^k\}_{k\in\mathbb{N}}$ is bounded, so is the sequence $\{y^k\}_{k\in\mathbb{N}}$; 
		
  \item[(iii)] for every $k\ge\overline{k}$, where $\overline{k}$ is the same as the one in Remark \ref{remark-Alg} (a), it holds that
  \[
   \alpha_k\ge\alpha_{\min}\!:=\!\min\Big\{1,\frac{\underline{\mu}-2\sigma}{2L_{\!f}}\beta\Big\}\ {\rm and}\ F(x^k)-F(x^{k+1})\geq \sigma\alpha_{\min}\|x^{k+1}-x^k\|^2,
   \]
  where $L_{\!f}$ is the Lipschitz constant of the function $\nabla\!f$ on a certain compact convex set $\Gamma$ such that $\mathcal{O}\supset\Gamma\supset\{x^k\}_{k\in\mathbb{N}}\cup\{y^k\}_{k\in\mathbb{N}}$, and the existence of such $\Gamma$ is due to Assumption \ref{ass0} (i)-(ii);
  
 \item[(iv)] $\lim_{k\to\infty}\|d^k\|=0,  \lim_{k\to\infty}r(x^k)=0$, $\omega(x^0)\subset S^*$ and $F\equiv \overline{F}\!:=\lim_{k\to\infty}F(x^k)$ on $\omega(x^0)$. 
 \end{description}
\end{proposition}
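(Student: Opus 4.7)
Part (i) is immediate: $\{x^k\}$ is bounded in the finite-dimensional space $\mathbb{X}$, so by Bolzano--Weierstrass the set $\omega(x^0)$ of its cluster points is nonempty, and as the intersection $\bigcap_{k\ge 1}{\rm cl}\{x^j:j\ge k\}$ of closed subsets of a bounded set it is compact.

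For part (ii), I plan to bound $\{\overline{x}^k\}$ first via a reference-point prox argument, then transfer the bound to $\{y^k\}$ through the inexactness criterion. Fix any $z_0\in{\rm ri}({\rm dom}\,g)$ and any $s_0\in\partial g(z_0)$ (nonempty by standard convex analysis). Letting $p^k:={\rm prox}_g^{\mathcal{G}_k}(z_0)$, the optimality of $p^k$ combined with the subgradient inequality $g(p^k)\ge g(z_0)+\langle s_0,p^k-z_0\rangle$ yields $\tfrac{\underline{\mu}}{2}\|p^k-z_0\|^2\le\tfrac{1}{2}\|p^k-z_0\|^2_{\mathcal{G}_k}\le\|s_0\|\,\|p^k-z_0\|$, hence $\{p^k\}$ is bounded. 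Since $\overline{x}^k={\rm prox}_g^{\mathcal{G}_k}(x^k-\mathcal{G}_k^{-1}\nabla\!f(x^k))$, the $\mathcal{G}_k$-nonexpansiveness of ${\rm prox}_g^{\mathcal{G}_k}$ together with $\underline{\mu}\mathcal{I}\preceq\mathcal{G}_k\preceq\overline{\mu}\mathcal{I}$, boundedness of $\{x^k\}$ and continuity of $\nabla\!f$ on $\mathcal{O}$ bound $\{\overline{x}^k\}$ uniformly. Then from the second inequality of \eqref{inexact-criterion} and strong convexity of $\Theta_k$, $\|y^k-\overline{x}^k\|\le\sqrt{2\varepsilon_k/\underline{\mu}}\,\|d^k\|$; combined with $\|d^k\|\le\|y^k-\overline{x}^k\|+\|\overline{x}^k-x^k\|$ and $\varepsilon_k\le\underline{\mu}/10$ for $k\ge\overline{k}$ (Remark~\ref{remark-Alg}(a)), this yields $\|d^k\|\le 2\|\overline{x}^k-x^k\|$ eventually, so $\{d^k\}$ and hence $\{y^k\}$ are bounded. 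This step is the main obstacle, since $g$ is not assumed coercive or bounded below and ${\rm dom}\,g$ may be unbounded, ruling out a direct energy estimate on the subproblem value.

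Part (iii) is the classical Armijo analysis enabled by (ii). Choose a compact convex $\Gamma\subset\mathcal{O}$ containing $\{x^k\}\cup\{y^k\}$, let $L_f$ denote the Lipschitz modulus of $\nabla\!f$ on $\Gamma$, and combine the descent lemma for $f$ with $g(x^k+td^k)\le(1-t)g(x^k)+tg(y^k)$ for $t\in[0,1]$ and \eqref{ineq-lk} to derive
\[
F(x^k+td^k)-F(x^k)\le-t\tfrac{\underline{\mu}}{2}\|d^k\|^2+t^2\tfrac{L_f}{2}\|d^k\|^2.
\]
If $\alpha_k<1$, the step $\alpha_k/\beta$ fails \eqref{lsearch}, which with $t=\alpha_k/\beta$ rearranges to $\alpha_k>\beta(\underline{\mu}-2\sigma)/L_f$; together with $\alpha_k\le 1$ this gives $\alpha_k\ge\alpha_{\min}$. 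The sufficient-decrease $F(x^k)-F(x^{k+1})\ge\sigma\alpha_{\min}\|x^{k+1}-x^k\|^2$ follows from \eqref{lsearch} and the two cases of \eqref{new-iter} (where $\|x^{k+1}-x^k\|\in\{\|d^k\|,\alpha_k\|d^k\|\}$ and $\alpha_k\in[\alpha_{\min},1]$).

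For part (iv), telescoping the decrease in (iii) against Assumption~\ref{ass0}(iii) yields $\sum_k\|x^{k+1}-x^k\|^2<\infty$, so $\|x^{k+1}-x^k\|\to 0$. Both cases of \eqref{new-iter} satisfy $\|d^k\|\le\alpha_{\min}^{-1}\|x^{k+1}-x^k\|$, giving $\|d^k\|\to 0$; Lemma~\ref{lemma-rk}(ii) with boundedness of $\{\varepsilon_k\}$ and $\{\|\mathcal{G}_k\|\}$ then yields $r(x^k)\to 0$. Continuity of $r$ on ${\rm cl}({\rm dom}\,g)$ (via continuity of ${\rm prox}_g$ and $\nabla\!f$) forces $\omega(x^0)\subset S^*$. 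Finally, $\{F(x^k)\}$ is monotone and bounded below, so converges to some $\overline{F}$; boundedness of $\{g(x^k)\}$ and lower semicontinuity of $g$ ensure every cluster point of $\{x^k\}$ lies in ${\rm dom}\,g$, and continuity of $g$ relative to ${\rm dom}\,g$ (Assumption~\ref{ass0}(i)) together with continuity of $f$ on $\mathcal{O}$ delivers $F(\overline{x})=\overline{F}$ along any convergent subsequence, completing (iv).
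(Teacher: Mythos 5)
Your proof is correct, and in part (ii) it takes a genuinely different route from the paper. The paper derives boundedness of $\{d^k\}$ from the inequality $(\underline{\mu}/4-2\varepsilon_k)\|y^k-x^k\|^2\le\Theta_k(x^k)-\Theta_k(y^k)$, imported from \cite[Theorem 4(ii)]{Bonettini20}, together with boundedness of the model values $\Theta_k(x^k)=F(x^k)$ and $\Theta_k(y^k)$; your argument instead bounds the exact minimizers $\{\overline{x}^k\}$ directly by the reference-point estimate $\frac{\underline{\mu}}{2}\|p^k-z_0\|^2\le\|s_0\|\,\|p^k-z_0\|$ for $p^k={\rm prox}_g^{\mathcal{G}_k}(z_0)$ plus $\mathcal{G}_k$-nonexpansiveness of the scaled proximal map, and then transfers the bound to $y^k$ via strong convexity and the second condition of \eqref{inexact-criterion}, which gives $\|d^k\|\le(1-\sqrt{2\varepsilon_k/\underline{\mu}})^{-1}\|\overline{x}^k-x^k\|\le 2\|\overline{x}^k-x^k\|$ for $k\ge\overline{k}$. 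This is self-contained (no appeal to the external lemma) and sidesteps having to justify a lower bound on $\Theta_k(y^k)$, at the cost of invoking $\partial g(z_0)\neq\emptyset$ on ${\rm ri}({\rm dom}\,g)$; the paper's route is shorter given the cited result and reuses the same inequality later in Proposition \ref{mFk-prop}. In part (iii) you use the descent lemma plus convexity of $g$ where the paper uses the mean-value theorem plus \eqref{ineq-lk}; both yield the violated-step inequality, and your version even gives the slightly sharper bound $t_k>(\underline{\mu}-2\sigma)/L_f$, which of course still implies $\alpha_k\ge\alpha_{\min}$. Parts (i) and (iv) match the paper's argument essentially verbatim, with your explicit remark that $\{g(x^k)\}$ is bounded (so cluster points lie in ${\rm dom}\,g$) making the final continuity step fully airtight.
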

\begin{proof}
{\bf(i)} As the sequence $\{x^k\}_{k\in\mathbb{N}}$ is bounded, the set $\omega(x^0)$ is nonempty. The compactness of $\omega(x^0)$ follows by noting that $\omega(x^0)=\bigcap_{q\in\mathbb{N}}\overline{\bigcup_{k\ge q}\{x^k\}}$, an intersection of compact sets. 
 
 \noindent
 {\bf(ii)} From the second inequality in \eqref{inexact-criterion}, for each $k\in\mathbb{N}$, we can conclude the following
 inequality by invoking \cite[Theorem 4 (ii)]{Bonettini20} with $\alpha=1,D=\mathcal{G}_k,\mu=\underline{\mu}^{-1}$ and $\epsilon=\varsigma_k$,
 \[
   (\underline{\mu}/4-2\varepsilon_k)\|y^k-x^k\|^2
   \le \Theta_k(x^k)-\Theta_k(y^k).
 \]
 Recall that $\varepsilon_k\in(0,\underline{\mu}/10]$ for all $k\ge\overline{k}$ and $d^k=y^k-x^k$ for each $k$. 
 Hence, for every $k\ge\overline{k}$, 
 \begin{equation}\label{temp-equa31}
   \|d^k\|^2
   \le 20\big[\Theta_k(x^k)-\Theta_k(y^k)\big]/\underline{\mu}.
 \end{equation}
 Note that $\Theta_k(x^k)=F(x^k)$ for each $k$. Along with the convergence of $\{F(x^k)\}_{k\in\mathbb{N}}$ by Remark \ref{objk-remark}, the sequence $\{\Theta_k(x^k)\}_{k\in\mathbb{N}}$ is bounded and so is $\{\Theta_k(y^k)\}_{k\in\mathbb{N}}$ by the first condition in \eqref{inexact-criterion}. From \eqref{temp-equa31}, the sequence $\{d^k\}_{k\in\mathbb{N}}$ is bounded, so is $\{y^k\}_{k\in\mathbb{N}}$ by recalling that $y^k=d^k+x^k$ for each $k$.

 \noindent
 {\bf(iii)} As $\Gamma$ is a compact convex set containing the sequences $\{x^k\}_{k\in\mathbb{N}}$ and $\{y^k\}_{k\in\mathbb{N}}$, we have $\bigcup_{\tau\in[0,1]}\{x^k\!+\!\tau d^k\}_{k\in\mathbb{N}}\subset\Gamma$. 
 Recall that $f$ is twice continuously differentiable on $\mathcal{O}\supset\Gamma$, so $\nabla\!f$ is strictly continuous on $\Gamma$. By the compactness of $\Gamma$, $\nabla\!f$ is Lipschitz continuous on $\Gamma$. Then, 
 \begin{equation}\label{grad-Lip}
 \|\nabla\!f(x)-\nabla\!f(x')\|\le L_{\!f}\|x-x'\|\quad\ \forall x',x\in\Gamma.
 \end{equation} 
 Write $\mathcal{K}\!:=\{k\in\mathbb{N}\ |\ \alpha_k<1\}$. Fix any $k\in\mathcal{K}$. As inequality \eqref{lsearch} is violated for the step-size $t_k:=\alpha_k/\beta$, from the convexity of $g$ and the definition of $\ell_k$ in \eqref{lk-fun}, we have
 \begin{align}\label{temp-dkineq0}
 \sigma t_k\|d^k\|^2
  &>f(x^k)-f(x^k+t_kd^k)+g(x^k)-g(x^k+t_kd^k) \nonumber\\
  &\ge f(x^k)-f(x^k+t_kd^k)+t_k(g(x^k)-g(y^k)) \nonumber\\
  &= f(x^k)-f(x^k+t_kd^k)+t_k(\langle\nabla\!f(x^k), 			d^k\rangle+\ell_k(x^k)-\ell_k(y^k))\nonumber\\
  &= t_k\langle\nabla\!f(x^k)-\nabla\!f(\xi^k),d^k\rangle + t_k(\ell_k(x^k)-\ell_k(y^k)) \nonumber\\
  &\ge t_k\langle\nabla\!f(x^k)-\nabla\!f(\xi^k),d^k\rangle + \frac{1}{2}\underline{\mu}t_k\|d^k\|^2
  \end{align}
  where the second equality is obtained by using the mean-value theorem with $\xi^k\in[x^k,x^k\!+\!t_kd^k]$, the line segment ended at $x^k$ and $x^k\!+\!t_kd^k$, and the last inequality comes from \eqref{ineq-lk}. Note that $\{\xi^k\}_{k\in\mathbb{N}}\subset\Gamma$ implied by $t_k\in(0,1]$ and the convexity of $\Gamma$. By combining \eqref{temp-dkineq0} with \eqref{grad-Lip}, 
  \[  
  (\underline{\mu}/2\!-\!\sigma) t_k\|d^k\|^2\le t_k\|\nabla\!f(\xi^k)\!-\!\nabla\!f(x^k)\|\|d^k\|\le L_{\!f}t_k^2\|d^k\|^2\quad\forall k\in\mathcal{K}, 
  \]	
  which implies that $t_k\ge\frac{\underline{\mu}-2\sigma}{2L_{\!f}}$. Hence, for all $k\in\mathbb{N}$, $\alpha_k\ge\alpha_{\min}$, and consequently,
  \[
    F(x^k)-F(x^{k+1})\geq F(x^k)-F(x^k\!+\!\alpha_kd^k)
    \geq\sigma\alpha_k\|y^k-x^k\|^2\geq \sigma\alpha_{\rm min}\|x^{k+1}-x^k\|^2,
  \]
  where the first inequality is due to $F(x^{k+1})\le F(x^k+\alpha_kd^k)$ implied by \eqref{new-iter}.
 
  \noindent
  {\bf(iv)} From the convergence of $\{F(x^k)\}_{k\in\mathbb{N}}$ by Remark \ref{objk-remark} and equations \eqref{lsearch}-\eqref{new-iter}, it follows that $\lim_{k\to\infty}\alpha_k\|d^k\|^2=0$. Together with part (iii), $\lim_{k\to\infty}\|d^k\|=0$. Combining $\lim_{k\to\infty}\|d^k\|=0$ with Lemma \ref{lemma-rk} (ii) and the boundedness of $\{\|\mathcal{G}_k\|\}_{k\in\mathbb{N}}$ leads to $\lim_{k\to\infty}r(x^k)=0$. Pick any $\overline{x}\in\omega(x^0)$. There exists an index set $\mathcal{K}\subset\mathbb{N}$ such that $\lim_{\mathcal{K}\ni k\to\infty}x^{k}=\overline{x}$. Note that the function $r$ in \eqref{rfun} is continuous relative to ${\rm cl}\,({\rm dom}g)$, $\overline{x}\in{\rm cl}\,({\rm dom}g)$ and $\lim_{k\to\infty}r(x^k)=0$, so $r(\overline{x})=0$ and then $\omega(x^0)\subset S^*$. Recalling that $\omega(x^0)\subset{\rm dom}\,g$ is compact, $\{x^k\}_{k\in \mathcal{K}}\subset{\rm dom}\,g$, and $F$ is continuous relative to ${\rm dom}\,g$ by Assumption \ref{ass0} (i)-(ii), we have $F(\overline{x})=\overline{F}$. Then $F\equiv \overline{F}$ on $\omega(x^0)$ by the arbitrariness of $\overline{x}\in\omega(x^0)$. The proof is completed. \qed		
\end{proof}
\section{Convergence analysis of Algorithm \ref{lsVMiPG}}\label{sec4}

To analyze the convergence of the sequence $\{x^k\}_{k\in\mathbb{N}}$ of Algorithm \ref{lsVMiPG}, for each $\gamma>0$, we define
\begin{equation}\label{mf}
 \Phi_{\gamma}(z):=F_{\gamma}(x)+({L_{\!f}}/{2})\|y-x\|^2+t^2\quad\forall z=(x,y,t)\in\mathbb{Z},
\end{equation}
where $F_{\gamma}$ is the FB envelope of $F$ associated to $\gamma$,  $L_{\!f}$ is the same as the one in \eqref{grad-Lip}, and $\mathbb{Z}$ is the vector space $\mathbb{X}\times\mathbb{X}\times\mathbb{R}_{+}$ equipped with the norm $\|z\|=\sqrt{\|x\|^2+\|y\|^2+t^2}$ for $z=(x,y,t)\in\mathbb{Z}$. By invoking Lemma \ref{FBenvelop} (ii)-(iv), we can obtain the following properties of the function $\Phi_{\gamma}$.
\begin{lemma}\label{mFlemma}
 Fix any $\gamma>0$. The function $\Phi_{\gamma}$ is continuously differentiable on ${\rm cl}\,({\rm dom}\,g)\times\mathbb{X}\times\mathbb{R}_{+}$. Moreover, if $\overline{x}$ is a stationary point of \eqref{prob}, then $\nabla\Phi_{\gamma}(\overline{z})=0$ and $\Phi_{\gamma}(\overline{z})=\overline{F}$ for $\overline{z}=(\overline{x},\overline{x},0)$.  
 \end{lemma}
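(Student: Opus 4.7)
The plan is to obtain the result as a routine assembly of Lemma \ref{FBenvelop} with elementary calculus, since all heavy lifting (the smoothness of the FB envelope and its vanishing gradient at stationary points) has already been done. First I would verify the continuous differentiability of $\Phi_{\gamma}$ on the product set ${\rm cl}\,({\rm dom}\,g)\times\mathbb{X}\times\mathbb{R}_{+}$. By Lemma \ref{FBenvelop}(ii), the map $x\mapsto F_{\gamma}(x)$ is $C^1$ on ${\rm cl}\,({\rm dom}\,g)$. The second term $(x,y)\mapsto(L_{\!f}/2)\|y-x\|^2$ and the third term $t\mapsto t^2$ are polynomials, hence $C^1$ everywhere. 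Since $\Phi_{\gamma}$ is the sum of their pullbacks through the coordinate projections, continuous differentiability is preserved on the product.

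Next I would compute the three partial gradients of $\Phi_{\gamma}$ at a generic $z=(x,y,t)$ via the chain rule:
\begin{equation*}
\nabla_{\!x}\Phi_{\gamma}(z)=\nabla F_{\gamma}(x)-L_{\!f}(y-x),\quad
\nabla_{\!y}\Phi_{\gamma}(z)=L_{\!f}(y-x),\quad
\nabla_{\!t}\Phi_{\gamma}(z)=2t.
\end{equation*}
Evaluating at $\overline{z}=(\overline{x},\overline{x},0)$, the $y$- and $t$-components vanish directly. For the $x$-component, since $\overline{x}$ is a stationary point of \eqref{prob} and hence a critical point of $F$, Lemma \ref{FBenvelop}(iii) yields $\nabla F_{\gamma}(\overline{x})=0$; combined with the cross-term $L_{\!f}(\overline{x}-\overline{x})=0$ this gives $\nabla_{\!x}\Phi_{\gamma}(\overline{z})=0$. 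Consequently $\nabla\Phi_{\gamma}(\overline{z})=0$.

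Finally, plugging $\overline{z}$ into the definition \eqref{mf},
\begin{equation*}
\Phi_{\gamma}(\overline{z})=F_{\gamma}(\overline{x})+({L_{\!f}}/{2})\|\overline{x}-\overline{x}\|^2+0^2=F_{\gamma}(\overline{x})=F(\overline{x}),
\end{equation*}
where the last equality uses Lemma \ref{FBenvelop}(iii) once more. When $\overline{x}\in\omega(x^0)$ (the context in which $\overline{F}$ is defined), Proposition \ref{prop-xk}(iv) gives $F(\overline{x})=\overline{F}$, completing the claim.

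There is essentially no conceptual obstacle; the argument is a transcription of Lemma \ref{FBenvelop}(ii)--(iii) through the particular choice of $\Phi_{\gamma}$. The only minor bookkeeping concern is handling differentiability of $t^2$ at the boundary $t=0$ of $\mathbb{R}_{+}$, which is standard either by one-sided derivatives or by regarding $\Phi_{\gamma}$ as the restriction of a function $C^1$ on all of $\mathbb{X}\times\mathbb{X}\times\mathbb{R}$.
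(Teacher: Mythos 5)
Your proof is correct and follows exactly the route the paper intends: the paper states this lemma without a written proof, merely remarking that it follows "by invoking Lemma \ref{FBenvelop} (ii)--(iv)", and your argument is precisely that invocation carried out in detail (smoothness of $F_{\gamma}$ plus the quadratic terms, the chain-rule gradient, and $\nabla F_{\gamma}(\overline{x})=0$, $F_{\gamma}(\overline{x})=F(\overline{x})$ at critical points). Your closing remark that $F(\overline{x})=\overline{F}$ requires $\overline{x}\in\omega(x^0)$ via Proposition \ref{prop-xk}(iv) is also the correct reading of the slightly loose statement, and matches how the lemma is actually used later in Theorem \ref{gconverge}.
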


In the rest of this section, we write $z^k\!:=(x^k,y^k,\sqrt{\varsigma_k})$ for each $k\in\mathbb{N}$, and prove that the sequence $\{x^k\}_{k\in\mathbb{N}}$ converges to a stationary point of \eqref{prob} under the following assumption: 
\begin{assumption}\label{ass1}
 \begin{description}
 \item[(i)] The sequence $\{x^k\}_{k\in\mathbb{N}}$ is bounded.    

 \item[(ii)] There exists a constant $\overline{\mu}>0$ such that $\|\mathcal{G}_k\|\le\overline{\mu}$ for all $k\in\mathbb{N}$. 
 \end{description}
\end{assumption} 
 Assumption \ref{ass1} (i) is rather mild. Recall that $\{x^k\}_{k\in\mathbb{N}}\subset{\rm dom}\,g$. When $g$ is an indicator function of a certain bounded convex set, Assumption \ref{ass1} (i) automatically holds. When $F$ has a bounded level set $\mathcal{L}_{F}(x^0):=[F\le F(x^0)]$, Assumption \ref{ass1} (i) also holds because $\{x^k\}_{k\in\mathbb{N}}\subset\mathcal{L}_{F}(x^0)$ by Remark \ref{objk-remark}. For Assumption \ref{ass1} (ii), it is easy to construct a linear mapping sequence $\{\mathcal{G}_k\}_{k\in\mathbb{N}}$ satisfying Assumption \ref{ass1} (ii); for example, when $\mathcal{G}_k$ is chosen to be $\mathcal{G}(x^k)$ with the mapping $\mathcal{G}$ defined in \eqref{MGmap}, Assumption \ref{ass1} (ii) holds under the boundedness of the sequence $\{x^k\}_{k\in\mathbb{N}}$.  

 Before proving the convergence of $\{x^k\}_{k\in\mathbb{N}}$, we give some useful properties of $\{\Phi_{\gamma}(z^k)\}_{k\in\mathbb{N}}$. 

\begin{proposition}\label{mFk-prop}
 Let $\{(x^k,y^k)\}_{k\in\mathbb{N}}$ be the sequence generated by Algorithm \ref{lsVMiPG} and let $\{\varsigma_k\}_{k\in\mathbb{N}}$ be the sequence defined by \eqref{inexact-criterion}. Suppose that Assumption \ref{ass1} holds. Then, for any given $\gamma\in(0,1/\overline{\mu}]$, the following assertions hold for all $k\ge\overline{k}$, where $\overline{k}$ is the same as in Remark \ref{remark-Alg} (a). 
 \begin{description}
 \item[(i)] $\Phi_{\gamma}(z^k)\ge F(y^k)+\frac{1}{2}\|y^k-x^k\|^2_{\mathcal{G}_k}$; 
		
 \item[(ii)] $F(x^{k+1})+\frac{1}{2}\|y^k-x^k\|^2_{\mathcal{G}_k}\leq\Phi_{\gamma}(z^k)\leq F(x^k)+\frac{L_{\!f}}{2}\|y^k-x^k\|^2+\varsigma_k$;
		
 \item[(iii)] $\|\nabla\Phi_{\gamma}(z^k)\|\leq b\|x^k-x^{k+1}\|$ for some $b>0$.
\end{description}
\end{proposition}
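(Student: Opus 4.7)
For \textbf{Part (i)}, my plan is first to observe that the condition $\gamma \le 1/\overline{\mu}$ forces $\gamma^{-1}\mathcal{I} - \mathcal{G}_k \succeq 0$. Combined with the identity $\Theta_k(y) = \ell_k(y) + \tfrac{1}{2}\|y-x^k\|^2_{\mathcal{G}_k}$ from \eqref{Thetak-ellk} and the expression \eqref{Ggam-def}, this gives the pointwise bound $G_{\gamma}(x^k,y) \ge \Theta_k(y)$ for every $y \in \mathbb{X}$ (trivially outside ${\rm dom}\,g$, by a quadratic comparison on ${\rm dom}\,g$), whence $F_{\gamma}(x^k) = \inf_{y}G_{\gamma}(x^k,y) \ge \Theta_k(\overline{x}^k)$. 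Chain this with the inexactness bound $\Theta_k(\overline{x}^k) \ge \Theta_k(y^k) - \varsigma_k$ given in \eqref{inexact-criterion}, re-expand $\Theta_k(y^k)$ through \eqref{Thetak-ellk}, and apply the descent-lemma inequality $\ell_k(y^k) \ge F(y^k) - (L_{\!f}/2)\|y^k - x^k\|^2$, which is valid because $x^k, y^k$ lie in the compact set $\Gamma$ on which $\nabla\!f$ is $L_{\!f}$-Lipschitz by Proposition \ref{prop-xk}(iii). Adding $(L_{\!f}/2)\|y^k - x^k\|^2 + \varsigma_k$ to both sides then produces (i).

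\textbf{Part (ii)} is a corollary requiring essentially no extra work. The upper bound is immediate from $F_{\gamma}(x^k) \le F(x^k)$ (Lemma \ref{FBenvelop}(i)) together with the definition \eqref{mf} of $\Phi_\gamma$. The lower bound combines part (i) with the update rule \eqref{new-iter}, which guarantees $F(x^{k+1}) \le F(y^k)$ irrespective of which branch is chosen.

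\textbf{Part (iii)} is the main technical step. Differentiating \eqref{mf} componentwise at $z^k$ yields $\nabla_{x}\Phi_{\gamma}(z^k) = \nabla F_{\gamma}(x^k) - L_{\!f}d^k$, $\nabla_{y}\Phi_{\gamma}(z^k) = L_{\!f}d^k$, and $\nabla_{t}\Phi_{\gamma}(z^k) = 2\sqrt{\varsigma_k} = 2\sqrt{\varepsilon_k}\,\|d^k\|$. By Lemma \ref{FBenvelop}(ii),
\[
\nabla F_{\gamma}(x^k) = \gamma^{-1}\bigl[\mathcal{I} - \gamma\nabla^{2}\!f(x^k)\bigr]\bigl(x^k - u^k\bigr),\qquad u^k := {\rm prox}_{\gamma g}(x^k - \gamma\nabla\!f(x^k)),
\]
and since $\{x^k\}$ lies in the compact set $\Gamma$, $\|\mathcal{I} - \gamma\nabla^{2}\!f(x^k)\|$ is uniformly bounded; everything therefore reduces to showing $\|x^k - u^k\| \le C\|d^k\|$ for a uniform $C$. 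To obtain this, write the optimality conditions for $\overline{x}^k$ (minimizer of $\Theta_k$) and for $u^k$ (minimizer of $\ell_k(\cdot) + \tfrac{1}{2\gamma}\|\cdot - x^k\|^2$), subtract, and invoke monotonicity of $\partial g$ to arrive at
\[
\gamma^{-1}\|u^k - \overline{x}^k\|^{2} \le \bigl\langle(\gamma^{-1}\mathcal{I} - \mathcal{G}_k)(\overline{x}^k - x^k),\,\overline{x}^k - u^k\bigr\rangle.
\]
Because $\underline{\mu}\mathcal{I} \preceq \mathcal{G}_k \preceq \gamma^{-1}\mathcal{I}$, the operator $\gamma^{-1}\mathcal{I} - \mathcal{G}_k$ is positive semidefinite with spectral norm at most $\gamma^{-1} - \underline{\mu}$, and Cauchy–Schwarz gives $\|u^k - \overline{x}^k\| \le (1 - \gamma\underline{\mu})\|\overline{x}^k - x^k\|$. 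On the other side, strong convexity of $\Theta_k$ with modulus $\underline{\mu}$ combined with the inexactness criterion yields $\|y^k - \overline{x}^k\| \le \sqrt{2\varepsilon_k/\underline{\mu}}\,\|d^k\|$, so the triangle inequality and the boundedness of $\{\varepsilon_k\}$ produce the desired $\|x^k - u^k\| \le C\|d^k\|$. Assembling the three gradient components then gives $\|\nabla\Phi_{\gamma}(z^k)\| \le C'\|d^k\|$. Finally, examining both branches of \eqref{new-iter} and using $\alpha_k \ge \alpha_{\min}$ from Proposition \ref{prop-xk}(iii), one gets $\|d^k\| \le \alpha_{\min}^{-1}\|x^{k+1} - x^k\|$, which closes the argument with $b := C'/\alpha_{\min}$. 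The principal obstacle is the bound on $\|x^k - u^k\|$, since $u^k$ comes from a different proximal subproblem (governed by $\gamma^{-1}\mathcal{I}$) than the one used to compute $y^k$ (governed by $\mathcal{G}_k$).
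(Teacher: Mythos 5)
Your proposal is correct, and parts (i) and (ii) follow the paper's argument essentially verbatim (the paper routes the chain of inequalities through the minimizer $x^{k,*}$ of $q_k(z)=f(x^k)+\langle\nabla\!f(x^k),z-x^k\rangle+\frac{1}{2\gamma}\|z-x^k\|^2+g(z)$, but the content — $q_k\ge\Theta_k$ from $\gamma^{-1}\ge\overline{\mu}\ge\|\mathcal{G}_k\|$, the inexactness bound, and the descent lemma on $\Gamma$ — is identical). Part (iii) also follows the paper's overall skeleton: the same componentwise gradient bound, the same invocation of Lemma \ref{FBenvelop} (ii), the same decomposition of $\|x^k-u^k\|$ through $\overline{x}^k$ and $y^k$, and the same closing step via $\|x^{k+1}-x^k\|\ge\alpha_{\min}\|d^k\|$. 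The one genuine difference is how you control the cross-subproblem term $\|\overline{x}^k-u^k\|$: the paper compares function values, using $0\in\partial\Theta_k(\overline{x}^k)$, strong convexity of $\Theta_k$, the sandwich $\Theta_k\le q_k$, and optimality of $u^k$ for $q_k$, arriving at $\|\overline{x}^k-u^k\|\le\sqrt{(1-\underline{\mu}\gamma)/(\gamma\underline{\mu})}\,\|x^k-\overline{x}^k\|$; you instead subtract the two first-order optimality inclusions and use monotonicity of $\partial g$ together with $0\preceq\gamma^{-1}\mathcal{I}-\mathcal{G}_k\preceq(\gamma^{-1}-\underline{\mu})\mathcal{I}$, which is equally valid and in fact yields the sharper constant $1-\gamma\underline{\mu}\le 1$ (the paper's constant degrades like $(\gamma\underline{\mu})^{-1/2}$ for small $\gamma\underline{\mu}$). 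Since only the constant $b$ matters here, both routes deliver the proposition; yours is marginally cleaner at that step.
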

\begin{proof}
 Fix any $k\ge\overline{k}$. Denote the objective function of problem \eqref{FgamD-def} with $x=x^k$ and $\mathcal{D}=\mathcal{I}$ by 
 \begin{equation}\label{overlineTheta}
  q_{k}(z)\!:=f(x^k)+\langle\nabla\!f(x^k),z-\!x^k\rangle+(1/{2\gamma})\|z\!-x^k\|^2+g(z)
		\quad\ \forall z\in\mathbb{X}.
 \end{equation}
 For any $z\in\mathbb{X}$, as $\gamma^{-1}\ge\overline{\mu}\ge\|\mathcal{G}_k\|$, we have $\frac{1}{2\gamma}\|z\!-x^k\|^2\ge\frac{1}{2}(z-x^k)^{\top}\mathcal{G}_k(z-x^k)$ and $q_k(z)\ge\Theta_k(z)$. In addition, by the expressions of $\Phi_{\gamma}$ and $F_{\gamma}$, $\Phi_{\gamma}(z^k)=\frac{L_{\!f}}{2}\|y^k-x^k\|^2+\varsigma_k+\min_{z\in\mathbb{X}}q_k(z)$. 
 
 \noindent 
 {\bf(i)} Let $\Gamma$ be the compact convex set appearing in Proposition \ref{prop-xk} (iii). Recall that $\{x^k\}_{k\in\mathbb{N}}\subset\Gamma$ and $\{y^k\}_{k\in\mathbb{N}}\subset\Gamma$. By invoking inequality \eqref{grad-Lip} and the descent lemma, 
 \begin{equation}\label{descent-ineq}
  f(y^k)\le f(x^k)+\nabla\! f(x^k)^{\top}(y^k-x^k)+({L_{\!f}}/{2})\|y^k-x^k\|^2.
 \end{equation}
 Let $x^{k,*}$ be the unique minimizer of the strongly convex function $q_k$. Then, it holds that
 \begin{align*}
 \Phi_{\gamma}(z^k)
  &=q_{k}(x^{k,*})+\frac{L_{\!f}}{2}\|y^k-x^k\|^2+\varsigma_k		\ge\Theta_k(x^{k,*})+\frac{L_{\!f}}{2}\|y^k-x^k\|^2+\varsigma_k\\
  &\ge \Theta_k(\overline{x}^k)+\frac{L_{\!f}}{2}\|y^k-x^k\|^2+\varsigma_k\ge \Theta_k(y^k)+\frac{L_{\!f}}{2}\|y^k-x^k\|^2\\
  &=f(x^k)+\nabla\! f(x^k)^{\top}(y^k\!-\!x^k)+\frac{1}{2}\|y^k\!-\!x^k\|_{\mathcal{G}_k}^2+g(y^k)+\frac{L_{\!f}}{2}\|y^k\!-\!x^k\|^2\\
  &\geq f(y^k)+\frac{1}{2}\|y^k-x^k\|^2_{\mathcal{G}_k}+g(y^k)=F(y^k)+\frac{1}{2}\|y^k-x^k\|^2_{\mathcal{G}_k},
  \end{align*}
  where the first inequality is due to $q_k(x)\ge\Theta_k(x)$ for any $x\in\mathbb{X}$, the second one is obtained by using the optimality of $\overline{x}^k$ and the feasibility of $x^{k,*}$ to subproblem \eqref{subprobx}, the third one is obtained by using the second condition in \eqref{inexact-criterion}, and the last one is obtained by using \eqref{descent-ineq}. 
	
 \noindent
 {\bf(ii)} From \eqref{new-iter}, we have $F(x^{k+1})\leq F(y^k)$. Together with part (i),  the first inequality then holds. The second inequality follows by using the expression of $\Phi_{\gamma}$ and Lemma \ref{FBenvelop} (i).
	
 \noindent
 {\bf(iii)}  Recall that $\{x^k\}_{k\in\mathbb{N}}\subset\Gamma$, so we have $\|\nabla^2\!f(x^k)\|\le L_{\!f}$ and $\|\mathcal{I}\!-\!\gamma\nabla^2\!f(x^k)\|\le 1+L_{\!f}\gamma$. This, along with the expression of $\Phi_{\!\gamma}$ and Lemma \ref{FBenvelop} (ii), implies that 
 \begin{align}\label{temp-ineq40}
 \|\nabla\Phi_{\gamma}(z^k)\|
 &\le\|\nabla\!F_{\gamma}(x^k)\|+2L_{\!f}\|y^k\!-\!x^k\|+2\sqrt{\varsigma_k}\nonumber\\
 &\leq(\gamma^{-1}\!+\!L_{\!f})\|x^k\!-\!{\rm prox}_{\gamma g}(x^k\!-\!\gamma\nabla\!f(x^k))\|+2L_{\!f}\|y^k\!-\!x^k\|+2\sqrt{\varsigma_k}\nonumber\\
 &\le (\gamma^{-1}\!+\!L_{\!f})\big[\|x^k-y^k\|+\|y^k-\overline{x}^k\|+\|\overline{x}^k\!-\!{\rm prox}_{\gamma\!g}(x^k\!-\!\gamma\nabla\!f(x^k))\|\big]\nonumber\\
 &\quad +2L_{\!f}\|y^k\!-\!x^k\|+2\sqrt{\varsigma_k}.
 \end{align}
 Next we bound the term $\|\overline{x}^k\!-\!{\rm prox}_{\gamma\!g}(x^k\!-\!\gamma\nabla\!f(x^k))\|$. From the strong convexity of $\Theta_k$ with modulus $\underline{\mu}$, for any $y,z\in{\rm dom}\,g$ and any $w\in\partial\Theta_k(z)$, 
 \begin{equation*}
  \Theta_k(y)-\Theta_k(z)\ge w^{\top}(y-z)+({\underline{\mu}}/{2})\|y-z\|^2.
 \end{equation*}
 As $\overline{x}^k$ is the unique minimizer of $\Theta_k$, we have $0\in\partial\Theta_k(\overline{x}^k)$. By invoking this inequality with $z=\overline{x}^k$ and $y={\rm prox}_{\gamma\!g}(x^k\!-\!\gamma\nabla\!f(x^k))$ and using the definition of $q_k$, we obtain
 \begin{align*}
 ({\underline{\mu}}/{2})\|\overline{x}^k-{\rm prox}_{\gamma\!g}(x^k\!-\!\gamma\nabla\!f(x^k))\|^2 
 &\le \Theta_k({\rm prox}_{\gamma\!g}(x^k\!-\!\gamma\nabla\!f(x^k)))-\Theta_k(\overline{x}^k)\nonumber\\
 &\leq q_k({\rm prox}_{\gamma\!g}(x^k\!-\!\gamma\nabla\!f(x^k)))-\Theta_k(\overline{x}^k)\nonumber\\
 &\le q_k(\overline{x}^k)-\Theta_k(\overline{x}^k)\leq\frac{1-\underline{\mu}\gamma}{2\gamma}\|\overline{x}^k-x^k\|^2
 \end{align*} 
 where the third inequality is obtained by using the optimality of ${\rm prox}_{\gamma\!g}(x^k-\gamma\nabla\!f(x^k))$ to $\min_{x\in\mathbb{X}}q_k(x)$, and the last one is due to $\mathcal{G}_k\succeq\underline{\mu}\mathcal{I}$. Write $c_{\gamma,\underline{\mu}}:=\sqrt{
 \frac{1-\underline{\mu}\gamma}{\gamma\underline{\mu}}}$. Then,
 \[
  \big\|\overline{x}^k-{\rm prox}_{\gamma\!g}(x^k-\gamma\nabla\!f(x^k))\big\|\le c_{\gamma,\underline{\mu}}\|x^k-\overline{x}^k\|
  \le c_{\gamma,\underline{\mu}}\big(\|y^k-x^k\|+\|\overline{x}^k-y^k\|\big).
 \]
 In addition, by combining inequality \eqref{sconvex-Thetak} with the second condition in \eqref{inexact-criterion}, we have
 \begin{equation*}\label{temp-ineq41}
 (\underline{\mu}/2)\|y^k-\overline{x}^k\|^2\le \Theta_k(y^k)-\Theta_k(\overline{x}^k)\le\varsigma_k. 
 \end{equation*}
 By combining the above two inequalities with inequality \eqref{temp-ineq40}, it then follows that 
 \begin{align*}
  \|\nabla\Phi_{\gamma}(z^k)\|
  &\leq \big[\gamma^{-1}\!+\!3L_{\!f}\!+\!(\gamma^{-1}\!+\!L_{\!f})c_{\gamma,\underline{\mu}}\big]\|y^k-x^k\|+\sqrt{2}(\gamma^{-1}\!+\!L_{\!f})(1+c_{\gamma,\underline{\mu}})\underline{\mu}^{-1/2} \sqrt{\varsigma_k}+2\sqrt{\varsigma_k}\\
  &\le \big[\gamma^{-1}\!+\!3L_{\!f}\!+\!(\gamma^{-1}\!+\!L_{\!f})c_{\gamma,\underline{\mu}}+\sqrt{2}(\gamma^{-1}\!+\!L_{\!f})(1+c_{\gamma,\underline{\mu}})+2\sqrt{\underline{\mu}}\big]\|y^k-x^k\|,
 \end{align*}
 where the second inequality is obtained by using the expression of $\varsigma_k$ and Remark \ref{remark-Alg} (a). From \eqref{new-iter} and Proposition \ref{prop-xk} (iii), $\|x^{k+1}-x^k\|\ge\alpha_k\|y^k-x^k\|\ge\alpha_{\rm min}\|y^k-x^k\|$ for all $k\in\mathbb{N}$. Together with the above inequality, we obtain the desired result. The proof is completed. \qed
\end{proof}

The proof of Proposition \ref{mFk-prop} (iii) follows the similar arguments to those for \cite[Theorem 7 (iii)]{Bonettini20}, but we obtain this conclusion without requiring the Lipschitz continuity of $\nabla\!f$ on ${\rm dom}\,g$ instead on a compact convex set containing only the sequences $\{x^k\}_{k\in\mathbb{N}}$ and $\{y^k\}_{k\in\mathbb{N}}$. 
 
 Proposition \ref{mFk-prop} (iii) implies that $\|\nabla\Phi_{\gamma}(z^{k+1})\|\le b\|z^{k+2}-z^{k+1}\|$, i.e., the gradient lower bound for the iterates gap, but the function $\Phi_{\gamma}$ lacks sufficient decrease, so the convergence analysis framework developed in \cite{Attouch13,Bolte14} cannot be directly applied. Here, by following the arguments similar to those for \cite[Theorem 8]{Bonettini20}, we can establish the convergence of $\{x^k\}_{k\in\mathbb{N}}$ under the KL property of $\Phi_{\gamma}$ for a certain $\gamma\in(0,1/\overline{\mu}]$. Here, we include its proof for completeness.
\begin{theorem}\label{gconverge}
 Suppose that Assumption \ref{ass1} holds, and that $\Phi_{\gamma}$ for a certain $\gamma\in(0,1/\overline{\mu}]$ is a KL function. Then, $\sum_{k=0}^{\infty}\|x^{k+1}-x^k\|<\infty$, and consequently $\{x^k\}_{k\in\mathbb{N}}$ converges to a point $\overline{x}\in S^*$.
\end{theorem}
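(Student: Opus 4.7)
The plan is to adapt the Attouch--Bolte--Svaiter KL machinery to the smooth surrogate $\Phi_\gamma$, circumventing---in the spirit of \cite{Bonettini20,Bonettini21}---the obstacle explicitly flagged by the authors, namely that $\Phi_\gamma(z^k)$ need not satisfy a classical sufficient-decrease estimate along iterations.

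First I would assemble the limiting picture. By Proposition \ref{prop-xk}, $\omega(x^0)$ is a nonempty compact subset of $S^*$ on which $F$ takes the constant value $\overline{F}$, and $\|d^k\|\to 0$. Since $\{\varepsilon_k\}_{k\in\mathbb{N}}$ is bounded, $\varsigma_k=\varepsilon_k\|d^k\|^2\to 0$; coupled with Proposition \ref{mFk-prop}(ii) this forces $\Phi_\gamma(z^k)\to\overline{F}$, and the cluster set of $\{z^k\}_{k\in\mathbb{N}}$ is
\[
\Omega:=\{(\overline{x},\overline{x},0):\overline{x}\in\omega(x^0)\},
\]
a nonempty compact subset of ${\rm cl}({\rm dom}\,g)\times\mathbb{X}\times\mathbb{R}_{+}$ on which Lemma \ref{mFlemma} yields $\Phi_\gamma\equiv\overline{F}$ and $\nabla\Phi_\gamma\equiv 0$. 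The degenerate case $\Phi_\gamma(z^{k_0})\le\overline{F}$ for some $k_0$ is disposed of at once: Proposition \ref{mFk-prop}(i) combined with $F(x^{k_0+1})\ge\overline{F}$ gives $\|d^{k_0}\|_{\mathcal{G}_{k_0}}=0$, hence $d^{k_0}=0$ and the algorithm stalls at a stationary point. Otherwise, assuming $\Phi_\gamma(z^k)>\overline{F}$ for all $k$ large, the Bolte--Sabach--Teboulle uniformization of KL applied to the compact $\Omega$ produces $\epsilon,\eta>0$ and a common desingularizing $\varphi\in\Upsilon_{\!\varpi}$ with
\[
\varphi'(\Phi_\gamma(z)-\overline{F})\|\nabla\Phi_\gamma(z)\|\ge 1
\]
for every $z$ satisfying ${\rm dist}(z,\Omega)<\epsilon$ and $\overline{F}<\Phi_\gamma(z)<\overline{F}+\eta$; by the previous step, $z^k$ enters this neighborhood for all $k\ge K_0$.

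Next I would couple three ingredients: Proposition \ref{mFk-prop}(iii) supplying $\|\nabla\Phi_\gamma(z^k)\|\le b\|x^{k+1}-x^k\|$; the concavity inequality $\varphi(a)-\varphi(b)\ge\varphi'(a)(a-b)$; and the honest objective decrease $F(x^k)-F(x^{k+1})\ge\sigma\alpha_{\min}\|x^{k+1}-x^k\|^2$ from Proposition \ref{prop-xk}(iii). The KL inequality rewrites as $\varphi'(\Phi_\gamma(z^k)-\overline{F})\ge[b\|x^{k+1}-x^k\|]^{-1}$, and concavity then bounds $\varphi(\Phi_\gamma(z^k)-\overline{F})-\varphi(\Phi_\gamma(z^{k+1})-\overline{F})$ from below by $[\Phi_\gamma(z^k)-\Phi_\gamma(z^{k+1})]/(b\|x^{k+1}-x^k\|)$. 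The main difficulty surfaces here, because combining Proposition \ref{mFk-prop}(i) at $k$ with (ii) at $k+1$ only delivers
\[
\Phi_\gamma(z^k)-\Phi_\gamma(z^{k+1})\ge\tfrac{\underline{\mu}}{2}\|d^k\|^2-\bigl(\tfrac{L_f}{2}+\tfrac{\underline{\mu}}{10}\bigr)\|d^{k+1}\|^2,
\]
which is not an outright descent estimate. The resolution, following \cite{Bonettini20}, is to inject the $F$-descent into this bound and apply Young's inequality $2\sqrt{ab}\le a+b$ to produce, for $k\ge K_1\ge K_0$, an estimate of the form
\[
2\|x^{k+1}-x^k\|\le\|x^k-x^{k-1}\|+C\bigl[\varphi(\Phi_\gamma(z^k)-\overline{F})-\varphi(\Phi_\gamma(z^{k+1})-\overline{F})\bigr].
\]

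Summing the last inequality from $k=K_1$ to $N$ and letting $N\to\infty$, using $\varphi(\Phi_\gamma(z^N)-\overline{F})\to\varphi(0)=0$, yields $\sum_{k\ge 0}\|x^{k+1}-x^k\|<\infty$, so $\{x^k\}_{k\in\mathbb{N}}$ is Cauchy and converges to some $\overline{x}$; Proposition \ref{prop-xk}(iv) then identifies $\overline{x}\in S^*$. The step I expect to be hardest is the third one: the bookkeeping that converts the mismatched negative $\|d^{k+1}\|^2$ term into a genuinely summable telescope via the Young inequality, a calculation that rests crucially on the tail condition $\varepsilon_k\le\underline{\mu}/10$ for $k\ge\overline{k}$ (Remark \ref{remark-Alg}(a)) to ensure the absorption constants align properly.
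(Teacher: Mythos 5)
Your setup (the compact cluster set $\Lambda(z^0)=\{(\overline{x},\overline{x},0):\overline{x}\in\omega(x^0)\}$, constancy of $\Phi_\gamma$ there, the uniformized KL inequality, the gradient bound from Proposition \ref{mFk-prop}(iii), the $F$-descent from Proposition \ref{prop-xk}(iii), and the final AM--GM/telescoping scheme) matches the paper, and you correctly flag the central obstacle: $\Phi_\gamma(z^k)$ has no sufficient-decrease property, since Propositions \ref{mFk-prop}(i)--(ii) only give $\Phi_\gamma(z^k)-\Phi_\gamma(z^{k+1})\ge\frac{\underline{\mu}}{2}\|d^k\|^2-(\frac{L_f}{2}+\frac{\underline{\mu}}{10})\|d^{k+1}\|^2$, which can be negative. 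But your proposed resolution does not close this gap. The chain you describe -- concavity giving $\varphi(s_k)-\varphi(s_{k+1})\ge\varphi'(s_k)(s_k-s_{k+1})$ with $s_k=\Phi_\gamma(z^k)-\overline{F}$, followed by the KL lower bound $\varphi'(s_k)\ge[b\|x^{k+1}-x^k\|]^{-1}$ -- only produces a usable estimate when $s_k\ge s_{k+1}$: if $s_k-s_{k+1}<0$, lower-bounding the product $\varphi'(s_k)(s_k-s_{k+1})$ requires an \emph{upper} bound on $\varphi'(s_k)$, which the KL inequality does not supply. Consequently the displayed inequality $2\|x^{k+1}-x^k\|\le\|x^k-x^{k-1}\|+C[\varphi(\Phi_\gamma(z^k)-\overline{F})-\varphi(\Phi_\gamma(z^{k+1})-\overline{F})]$ is not derivable by your route (and its right-hand side can drop below its left-hand side when the bracket is negative and $\|d^{k+1}\|\gg\|d^k\|$), so "injecting the $F$-descent and applying Young's inequality" is not a proof step here but precisely the missing idea.

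What the paper actually does is avoid telescoping $\varphi(\Phi_\gamma(z^k)-\overline{F})$ altogether: it transfers the KL inequality from the non-monotone quantity $\Phi_\gamma(z^{k-1})$ to the \emph{monotone} quantity $F(x^k)$. Since Proposition \ref{mFk-prop}(ii) gives $F(x^k)\le\Phi_\gamma(z^{k-1})$ and $\varphi'$ is nonincreasing, the KL inequality at $z^{k-1}$ yields $\varphi'(F(x^k)-\overline{F})\ge\varphi'(\Phi_\gamma(z^{k-1})-\overline{F})\ge[b\|x^k-x^{k-1}\|]^{-1}$. One then sets $\Delta_k:=\varphi(F(x^k)-\overline{F})-\varphi(F(x^{k+1})-\overline{F})\ge 0$, uses concavity together with the genuine descent $F(x^k)-F(x^{k+1})\ge\sigma\alpha_{\min}\|x^{k+1}-x^k\|^2$ to get $\Delta_k\ge a\|x^{k+1}-x^k\|^2/(b\|x^k-x^{k-1}\|)$, and telescopes the nonnegative, summable sequence $\{\Delta_k\}$ after AM--GM. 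You should replace your third step by this transfer argument; the rest of your outline then goes through as written.
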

\begin{proof}
 As $\{x^k\}_{k\in\mathbb{N}}$ is bounded by Assumption \ref{ass1} (i), the sequence $\{y^k\}_{k\in\mathbb{N}}$ is bounded by Proposition \ref{prop-xk} (ii), and so is the sequence $\{\sqrt{\varsigma_k}\}_{k\in\mathbb{N}}$. Thus, the sequence $\{z^k\}_{k\in\mathbb{N}}$ is bounded. Denote by $\Lambda(z^0)$ the set of cluster points of $\{z^k\}_{k\in\mathbb{N}}$. From $\lim_{k\to\infty}\|d^k\|=0$ by Proposition \ref{prop-xk} (iv), we have $\Lambda(z^0)=\{(\overline{x},\overline{x},0)\in\mathbb{Z}\ |\ \overline{x}\in\omega(x^0)\}$. Note that $\omega(x^0)$ is a nonempty compact set by Proposition \ref{prop-xk} (i), so is the set $\Lambda(z^0)$. By the expression of $\Phi_{\gamma}$, Proposition \ref{prop-xk} (iv) and Lemma \ref{FBenvelop} (iii), $\Phi_{\gamma}$ keeps unchanged on the set $\Lambda(z^0)$, i.e., $\Phi_{\gamma}(\overline{z})=\overline{F}$ for all $\overline{z}\in\Lambda(z^0)$. As $\Phi_{\gamma}$ is a KL function, by invoking \cite[Lemma 6]{Bolte14}, there exist $\varepsilon>0,\varpi>0$ and $\varphi\in\Upsilon_{\varpi}$ such that for all $z\in[\overline{F}<\Phi_{\gamma}<\overline{F}+\varpi]\cap\{z\in\mathbb{Z}\ |\ {\rm{dist}}(z,\Lambda(z^0))<\varepsilon\}$,
 \begin{equation}\label{KL-ineq0}
  \varphi'(\Phi_{\gamma}(z)-\overline{F})\|\nabla\Phi_{\gamma}(z)\|\geq 1.
 \end{equation} 
 If there exists some $k_0\in\mathbb{N}$ such that $F(x^{k_0})=\overline{F}$, by Remark \ref{objk-remark} we have  $F(x^k)=\overline{F}$ for all $k\ge k_0$,  which along with Proposition \ref{prop-xk} (iii) implies that $x^k=x^{k+1}$ for all $k\ge k_0$ and the conclusion holds. Hence, it suffices to consider the case that $F(x^k)>\overline{F}$ for all $k\in\mathbb{N}$. By Proposition \ref{mFk-prop} (ii), $\Phi_{\gamma}(z^{k-1})>\overline{F}$ for all $k\ge\overline{k}+1$. Also, from Proposition \ref{mFk-prop} (ii) and $\lim_{k\to\infty}\|d^k\|=0$ and $\lim_{k\to\infty}F(x^k)=\overline{F}$ in Proposition \ref{prop-xk} (iv), we obtain $\lim_{k\to\infty}\Phi_{\gamma}(z^k)=\overline{F}$.  Note that $\lim_{k\to\infty}{\rm{dist}}(z^k,\Lambda(z^0))=0$. Therefore, if necessary by increasing $\overline{k}$, for all $k\ge\overline{k}+1$,
 \[
  z^{k-1}\in[\overline{F}<\Phi_{\gamma}<\overline{F}+\varpi]\cap\{z\in\mathbb{Z}\ |\ {\rm dist}(z,\Lambda(z^0))<\varepsilon\}. 
 \]
 From \eqref{KL-ineq0}, we have $\varphi'(\Phi_{\gamma}(z^{k-1})-\overline{F})\|\nabla\Phi_{\gamma}(z^{k-1})\|\ge 1$, which by Proposition \ref{mFk-prop} (iii) means that
 \begin{equation}\label{phif-f}
 \frac{1}{b\|x^{k}-x^{k-1}\|}\le\varphi'(\Phi_{\gamma}(z^{k-1})-\overline{F})\le\varphi'(F(x^k)-\overline{F})
		\quad\ \forall k\ge\overline{k}+1, 
 \end{equation}
 where the second inequality is obtained by using the nonincreasing of $\varphi'$ on $(0,+\infty]$ and $\Phi_{\gamma}(z^{k-1})\ge F(x^k)$. Now using the concavity of $\varphi$, inequality \eqref{phif-f} and Proposition \ref{prop-xk} (iii) yields that for any $k\ge\overline{k}+1$, 
 \begin{align*}
 \Delta_k&:=\varphi(F(x^k)-\overline{F})-\varphi(F(x^{k+1})-\overline{F})\geq\varphi'(F(x^k)-\overline{F})(F(x^k)-F(x^{k+1}))\\
  &\geq a\varphi'(F(x^k)-\overline{F})\|x^{k+1}-x^k\|^2
  \ge\frac{a\|x^{k+1}-x^k\|^2}{b\|x^{k}-x^{k-1}\|}\quad{\rm with}\ a=\sigma\alpha_{\rm min},
 \end{align*}
  which is equivalent to saying that $\|x^{k+1}\!-\!x^k\|^2\le (a^{-1}b\Delta_k)\|x^{k}\!-\!x^{k-1}\|$. Note that $2\sqrt{uv}\leq u+v$ for $u\ge 0,v\ge 0$. Then, 
  $2\|x^{k+1}-x^k\|\le\|x^k-x^{k-1}\|+\frac{b}{a}\Delta_k$ for any $k\ge\overline{k}+1$. Now fix any $k\geq\overline{k}+1$. Summing the last inequality from ${k}$ to any $l>{k}$ leads to 
  \begin{align}\label{temp-ratek}
   2\sum_{i={k}}^l\|x^{i+1}-x^i\|
   &\le\sum_{i={k}}^l\|x^i-x^{i-1}\|+\frac{b}{a}\sum_{i={k}}^l\Delta_i\nonumber\\
   &\le \sum_{i={k}}^l\|x^{i+1}-x^{i}\|+\|x^k-x^{k-1}\|+\frac{b}{a}\varphi(F(x^k)-\overline{F}),
  \end{align}
  where the second inequality is obtained by using the non-negativity of $\varphi$. Passing the limit $l\rightarrow\infty$ to the above inequality results in $\sum_{i=k}^\infty\|x^{i+1}-x^i\|<\infty$. The result then follows. \qed
\end{proof}
\begin{remark}\label{remark-converge}
The KL assumption on $\Phi_{\gamma}$ is rather weak. From \cite{Dries96} or \cite[Section 4.3]{Attouch10}, if $f$ and $g$ are definable in the same o-minimal structure over the real field $(\mathbb{R},+,\cdot)$, then $\Phi_{\gamma}$ for any $\gamma>0$ is a KL function. Compared with \cite[Theorem 1]{Bonettini21}, Theorem \ref{gconverge} establishes the convergence of the iterate sequence $\{x^k\}_{k\in\mathbb{N}}$ without requiring the Lipschitz continuity of $\nabla\!f$ on ${\rm dom}\,g$. 
\end{remark}

Next we analyze the local convergence rate of the sequence $\{x^k\}_{k\in\mathbb{N}}$ under the KL property of $\Phi_{\gamma}$ with exponent $\theta\in[1/2,1)$ for some $\gamma\in(0,1/\overline{\mu}]$. The result is stated as follows. 
\begin{theorem}\label{R-linear}
 Suppose that Assumption \ref{ass1} holds, and that $\Phi_{\gamma}$ for a certain $\gamma\in(0,1/\overline{\mu}]$ is a KL function with exponent $\theta\in[\frac{1}{2},1)$. Then, the sequence $\{x^k\}_{k\in\mathbb{N}}$ converges to a point $\overline{x}\in S^*$, and there exist $\varrho\in(0,1)$ and $c_1>0$ such that for all sufficiently large $k$,
 \[
  \|x^k-\overline{x}\|
  \le\left\{\begin{array}{cl}
		c_1\varrho^{k} &{\rm for}\ \theta=1/2,\\
		c_1 k^{\frac{1-\theta}{1-2\theta}}
		&{\rm for}\ \theta\in(\frac{1}{2},1).
	\end{array}\right.
 \]
\end{theorem}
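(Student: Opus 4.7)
The plan is to bootstrap from the convergence already established in Theorem \ref{gconverge} to obtain the rate via a standard KL-exponent argument, using the tail sum $T_k:=\sum_{i=k}^{\infty}\|x^{i+1}-x^i\|$. Note that Theorem \ref{gconverge} already yields $x^k\to\overline{x}\in S^*$ and that $\|x^k-\overline{x}\|\le T_k$, so it suffices to estimate the decay of $T_k$. The KL assumption with exponent $\theta$ means we may take $\varphi(s)=cs^{1-\theta}$ in Definition \ref{KL-def}, and the desingularizing inequality \eqref{KL-ineq0} at $z^{k-1}$ gives $c(1-\theta)(\Phi_\gamma(z^{k-1})-\overline{F})^{-\theta}\|\nabla\Phi_\gamma(z^{k-1})\|\ge 1$ for all large $k$. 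Combined with Proposition \ref{mFk-prop}(iii), this yields
\[
\Phi_\gamma(z^{k-1})-\overline{F}\le \bigl[bc(1-\theta)\bigr]^{1/\theta}\|x^k-x^{k-1}\|^{1/\theta},
\]
and Proposition \ref{mFk-prop}(ii) implies $F(x^k)-\overline{F}\le \Phi_\gamma(z^{k-1})-\overline{F}$, so the same bound holds for $F(x^k)-\overline{F}$.

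Next, I would revisit the summation argument in the proof of Theorem \ref{gconverge}. Summing the inequality $2\|x^{i+1}-x^i\|\le \|x^i-x^{i-1}\|+(b/a)\Delta_i$ from $i=k$ to $l$, telescoping $\sum\Delta_i\le\varphi(F(x^k)-\overline{F})$, and letting $l\to\infty$ gives the core recursion
\[
T_k\le \|x^k-x^{k-1}\|+\frac{b}{a}\varphi(F(x^k)-\overline{F})=(T_{k-1}-T_k)+\frac{bc}{a}\bigl(F(x^k)-\overline{F}\bigr)^{1-\theta}.
\]
Inserting the KL-derived upper bound on $F(x^k)-\overline{F}$ and using $\|x^k-x^{k-1}\|=T_{k-1}-T_k$, one obtains
\[
T_k\le (T_{k-1}-T_k)+C_1(T_{k-1}-T_k)^{(1-\theta)/\theta}
\]
for all sufficiently large $k$, with a constant $C_1>0$ depending on $a,b,c,\theta$.

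For $\theta=1/2$ the exponent $(1-\theta)/\theta$ equals $1$, so the recursion collapses to $T_k\le (1+C_1)(T_{k-1}-T_k)$, i.e.\ $T_k\le\varrho T_{k-1}$ with $\varrho:=(1+C_1)/(2+C_1)\in(0,1)$. Iterating and using $\|x^k-\overline{x}\|\le T_k$ yields the R-linear bound $\|x^k-\overline{x}\|\le c_1\varrho^k$. For $\theta\in(1/2,1)$ one has $(1-\theta)/\theta<1$; since $T_{k-1}-T_k\to 0$, we eventually have $(T_{k-1}-T_k)^{(1-\theta)/\theta}\ge T_{k-1}-T_k$, so the recursion simplifies to $T_k\le 2C_1(T_{k-1}-T_k)^{(1-\theta)/\theta}$, which rearranges to
\[
T_{k-1}-T_k\ge C_2\,T_k^{\theta/(1-\theta)}\quad\text{with }\alpha:=\tfrac{\theta}{1-\theta}>1.
\]
Invoking the standard sequence lemma (as in \cite{Attouch10} or \cite{Bolte14}): any non-negative non-increasing sequence $\{T_k\}$ with $T_{k-1}-T_k\ge C_2 T_k^{\alpha}$ satisfies $T_k=O(k^{-1/(\alpha-1)})$, and a direct computation gives $-1/(\alpha-1)=(1-\theta)/(1-2\theta)$, yielding the stated sublinear rate.

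The main obstacle, apart from correctly tracking constants, is the passage from $T_k\le C_1(T_{k-1}-T_k)^{(1-\theta)/\theta}$ to the closed-form rate in the sublinear regime; this is the classical Attouch--Bolte sequence lemma and is not a trivial calculation, but it is entirely standard once the recursion is in place. All other pieces — the KL inequality at $z^{k-1}$, the gradient bound from Proposition \ref{mFk-prop}(iii), the sufficient-decrease $F(x^k)-F(x^{k+1})\ge a\|x^{k+1}-x^k\|^2$ from Proposition \ref{prop-xk}(iii), and the sandwich $F(x^k)\le \Phi_\gamma(z^{k-1})$ from Proposition \ref{mFk-prop}(ii) — are already available in the excerpt.
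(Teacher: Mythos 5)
Your proposal is correct and follows essentially the same route as the paper's proof: both derive $(F(x^k)-\overline{F})^{\theta}\le c(1-\theta)b\|x^k-x^{k-1}\|$ from the KL inequality at $z^{k-1}$ together with Proposition \ref{mFk-prop}(ii)--(iii), feed this into the tail-sum recursion inherited from \eqref{temp-ratek}, absorb the linear term using $T_{k-1}-T_k\to 0$, and then treat $\theta=1/2$ by a geometric contraction and $\theta\in(1/2,1)$ by the standard Attouch--Bolte sequence lemma (the paper cites \cite{Attouch09} for this step). The only differences are cosmetic bookkeeping of the constants.
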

\begin{proof}
 The first part of conclusions follows by Theorem \ref{gconverge}, so it suffices to prove the second part. Since $\Phi_{\gamma}$ has the KL property of exponent $\theta$ at $\overline{x}$, inequality \eqref{phif-f} holds with $\varphi(t)=ct^{1-\theta}$ for $t\ge 0$, where $c>0$ is a constant, for all $k\ge\overline{k}+1$ (where $\overline{k}$ is the same as before),
 \[
  (F(x^{k})-\overline{F})^{\theta}
  \le c(1-\theta)b\|x^k-x^{k-1}\|.
 \] 
 In addition, from inequality \eqref{temp-ratek}, it follows that for any $l>k$,
  \[
   \sum_{i={k}}^l\|x^{i+1}-x^i\|\le\|x^{k}\!-\!x^{k-1}\|+\frac{b}{a}c(F(x^{k})-\overline{F})^{1-\theta}.
 \]
 From the above two inequalities, for all $k\geq\overline{k}+1$ (if necessary by increasing $\overline{k}$), it holds that
 \begin{align*}
  \sum_{i={k}}^l\|x^{i+1}-x^i\|
  &\le \|x^{k}\!-\!x^{k-1}\|+\frac{bc}{a}		\big(bc(1-\theta)\big)^{\frac{1-\theta}{\theta}}\|x^{k}\!-\!x^{k-1}\|^{\frac{1-\theta}{\theta}}\\
  &\le\|x^{k}\!-\!x^{k-1}\|^{\frac{1-\theta}{\theta}}+\frac{bc}{a}\big(bc(1-\theta)\big)^{\frac{1-\theta}{\theta}}\|x^{k}\!-\!x^{k-1}\|^{\frac{1-\theta}{\theta}}\\
  &=\gamma_1\Big(\sum_{i={k-1}}^l\!\|x^{i+1}-x^i\|\!-\!\sum_{i={k}}^l\|x^{i+1}-x^i\|\Big)^{\frac{1-\theta}{\theta}}\ \ {\rm with}\ \ \gamma_1=1+\frac{bc}{a}\big(bc(1-\theta)\big)^{\frac{1-\theta}{\theta}}.
  \end{align*}
  For each $k\ge\overline{k}+1$, let $\delta_k:=\sum_{i=k}^{\infty}\|x^{i+1}-x^i\|$. From the above inequality, when $\theta=1/2$, we have $\delta_{k}\le\frac{\gamma_1}{1+\gamma_1}\delta_{k-1}$, which implies that $\delta_{k}\le(\frac{\gamma_1}{1+\gamma_1})^{k-\overline{k}}\delta_{\overline{k}}$. Note that $\|x^k-\overline{x}\|\le\sum_{i=k}^{\infty}\|x^{i+1}-x^i\|=\delta_k$. The conclusion holds with $\varrho=\frac{\gamma_1}{1+\gamma_1}$ and 	$c_1=\delta_{\overline{k}}\varrho^{-\overline{k}}$. When $\theta\in(1/2,1)$, from the last inequality, we have $\delta_k^{\frac{\theta}{1-\theta}}\le\gamma_1^{\frac{\theta}{1-\theta}}(\delta_{k-1}\!-\!\delta_k)$ for all $k\ge\overline{k}+1$. By using this inequality and following the same analysis technique as those in \cite[Page 14]{Attouch09}, we obtain $\delta_{k}\le c_1k^{\frac{1-\theta}{1-2\theta}}$ for some $c_1>0$. Consequently, $\|x^k-\overline{x}\|\le c_1k^{\frac{1-\theta}{1-2\theta}}$ for all $k\ge\overline{k}+1$. The proof is completed. \qed
 \end{proof}

 For any $\gamma>0$,  if $F_{\gamma}$ is a KL function of exponent $\theta\in[1/2,1)$, using the expression of $\Phi_{\gamma}$ and the same arguments as those for \cite[Theorem 3.6]{LiPong18} can show that $\Phi_{\gamma}$ is a KL function with the same exponent $\theta$. Then, combining Theorem \ref{R-linear} with Corollary \ref{KL-FBE1} leads to the following corollary.   
 \begin{corollary}
 Suppose that Assumption \ref{ass1} holds, and that there is $\varepsilon>0$ such that $\bigcup_{x\in S^*}\mathbb{B}(x,\varepsilon)\subset\mathcal{O}$ and $L:=\max_{x\in\cup_{x\in S^*}\mathbb{B}(x,\varepsilon)}{\rm lip}\nabla\!f(x)<\overline{\mu}$. If $F$ is a KL function of exponent $\theta\in[1/2,1)$, then there exist $\varrho\in(0,1)$ and $c_1>0$ such that for all sufficiently large $k$,
 \[
   \|x^k-\overline{x}\|
   \le\left\{\begin{array}{cl}
		c_1\varrho^{k} &\ {\rm for}\ \theta=1/2,\\
		c_1 k^{\frac{1-\theta}{1-2\theta}}
		&\ {\rm for}\ \theta\in(\frac{1}{2},1).
	\end{array}\right.
 \]
 \end{corollary}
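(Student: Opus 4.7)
The plan is to chain Corollary \ref{KL-FBE1} (which promotes the KL property of $F$ to that of $F_{\gamma}$), the remark preceding the corollary (which transfers KL from $F_{\gamma}$ to $\Phi_{\gamma}$), and Theorem \ref{R-linear} (which delivers the rate); the key ingredient is to select $\gamma$ simultaneously admissible for all three.

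First I would observe that, since $g$ is convex by Assumption \ref{ass0}(i), it is prox-bounded with threshold $\lambda_g=\infty$, so Corollary \ref{KL-FBE1} demands only $\gamma\in(0,1/L)$. The strict inequality $L<\overline{\mu}$ gives $1/\overline{\mu}<1/L$, so any $\gamma\in(0,1/\overline{\mu}]$ is admissible both for Corollary \ref{KL-FBE1} and for the range required by Theorem \ref{R-linear}. Fix such a $\gamma$.

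Next, for every $\overline{x}\in S^*$, the ball $\mathbb{B}(\overline{x},\varepsilon)$ lies inside $\mathcal{O}$ by the standing hypothesis, and the local Lipschitz modulus of $\nabla\!f$ on this ball is at most $L$ by the definition of $L$. Since $F$ is a KL function of exponent $\theta$, it has KL exponent $\theta$ at $\overline{x}$, and Corollary \ref{KL-FBE1} yields the same for $F_{\gamma}$ at $\overline{x}$. Combined with the observation following Definition \ref{KL-def} (\cite[Lemma 2.1]{Attouch10}) that noncritical points automatically enjoy KL of exponent $0$, and hence locally of any exponent in $[0,1)$, this upgrades $F_{\gamma}$ to a KL function of exponent $\theta$. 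The inheritance argument of the remark preceding the corollary—adapting \cite[Theorem 3.6]{LiPong18} to the form $\Phi_{\gamma}(x,y,t)=F_{\gamma}(x)+(L_{\!f}/2)\|y-x\|^2+t^2$, whose extra dependence on $(y,t)$ is a separable smooth quadratic—then transfers exponent $\theta$ to $\Phi_{\gamma}$, at which point Theorem \ref{R-linear} gives the stated R-linear (for $\theta=1/2$) or sublinear (for $\theta\in(1/2,1)$) rate.

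The main delicate point I anticipate is justifying that pointwise KL of $F_{\gamma}$ at elements of $S^*$ actually suffices for $F_{\gamma}$ to be a KL function in the sense of Definition \ref{KL-def}. Under our choice $\gamma L<1$, the operator $\mathcal{I}-\gamma\nabla^2\!f(x)$ appearing in the gradient formula of Lemma \ref{FBenvelop}(ii) is invertible on a neighborhood of $S^*$, so the nontrivial critical points of $F_{\gamma}$ inside ${\rm cl}({\rm dom}\,g)$ coincide with the stationary points of $F$, which are covered by Corollary \ref{KL-FBE1}; this is precisely where the strict inequality $L<\overline{\mu}$ enters the hypothesis and closes the argument.
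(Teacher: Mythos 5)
Your proposal is correct and follows essentially the same route as the paper: the paper's own justification is exactly the paragraph preceding the corollary, which chains Corollary \ref{KL-FBE1}, the transfer of the KL exponent from $F_{\gamma}$ to $\Phi_{\gamma}$ via the argument of \cite[Theorem 3.6]{LiPong18}, and Theorem \ref{R-linear}. The details you add---that convexity of $g$ gives $\lambda_g=\infty$, that $L<\overline{\mu}$ makes the admissible ranges of $\gamma$ compatible, and that pointwise KL at $S^*$ suffices (noncritical points carry exponent $0$, and the KL inequality is only ever invoked near cluster points lying in $S^*$)---are precisely what the paper leaves implicit.
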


\section{Numerical experiments}
We are interested in problem \eqref{prob} with $f$ and $g$ having the following composite structure
 \begin{equation}\label{fg-fun}
  f(x)\!:=\vartheta(\mathcal{A}x)\ \ {\rm and}\ \ g(x)\!:=g_1(\mathcal{B}x)+g_2(x)
  \quad{\rm for}\ \ x\in\mathbb{X},
 \end{equation}
 where $\mathcal{A}\!:\mathbb{X}\to\mathbb{Z}$ and $\mathcal{B}\!:\mathbb{X}\to\mathbb{Y}$ are the given linear mappings, $\vartheta\!:\mathbb{Z}\to\overline{\mathbb{R}}$ is an lsc function that is twice continuously differentiable on $\mathcal{A}(\mathcal{O})$, and $g_1\!:\mathbb{Y}\to\overline{\mathbb{R}}$ and $g_2\!:\mathbb{X}\to\overline{\mathbb{R}}$ are lsc convex functions. Here, $\mathbb{Y}$ and $\mathbb{Z}$ are finite-dimensional real vector spaces endowed with the inner product $\langle\cdot,\cdot\rangle$ and its induced norm $\|\cdot\|$. We assume that the conjugate functions $g_1^*$ and $g_2^*$ have a closed-form proximal mapping but $g$ has no closed-form proximal mapping.
\subsection{Variable metric linear operators for VMiPG}\label{sec5.1} 

 We test the performance of Algorithm \ref{lsVMiPG} with three kinds of variable metric linear operators. 

 \noindent
 {\bf VMiPG-H.} This corresponds to Algorithm \ref{lsVMiPG} with the linear operators $\mathcal{G}_k$ constructed as follow:
 \begin{equation}\label{HessGk} \mathcal{G}_k=\mathcal{A}_k^{*}\mathcal{A}_k+\underline{\mu}\mathcal{I}
	\ \ {\rm with}\ \  \mathcal{A}_k\!:={\rm Diag}\big[\max(0,\lambda(\nabla^2\vartheta(\mathcal{A}x^k)))\big]^{1/2}\!\mathcal{A}.
\end{equation}
 Such $\mathcal{G}_k$ is precisely the one from Remark \ref{remark-Alg} (c), and when $\vartheta$ is separable, it is very cheap to achieve it. Obviously, the sequence $\{\mathcal{G}_k\}_{k\in\mathbb{N}}$ satisfies the requirment in step 1 of Algorithm \ref{lsVMiPG}. To validate the efficiency of this variable metric operator, we compare it with two common variable metric linear operators (see, e.g., \cite{Bonettini16,Bonettini20,Becker19}) which have the following type
\begin{equation}\label{compareGk}
\mathcal{G}_k = \alpha_k^{-1} \mathcal{D}_k.
\end{equation}
 where $\alpha_k$ is determined by using the same strategy as in \cite{Porta15}, and the metric operator $\mathcal{D}_k$ is computed according to the following alternative rules:


  \noindent
 {\bf VMiPG-S.} When $\mathbb{X}=\mathbb{R}^{m\times n}$, according to the split-gradient (SG) strategy \cite{Porta15}, if the gradient of $f$ is decomposable into the difference between a positive part and a non-negative part, i.e., $\nabla f(x)=V(x)-U(x)$ with $V(x)=\big(V(x)\big)_{ij}\!>0$ and $U(x) = \big(U(x)\big)_{ij}\!\geq0$ for $1\leq i\leq m$ and $1\leq j \leq n$, then we can construct an effective metric operator as
\begin{equation*}\label{SG matrix}
    \mathcal{D}_k(x) = D_k\circ x,
\end{equation*}
where the notation ``$\circ$'' denotes the Hadamard product of the matrices and $D_k^{-1}$ can be computed with a given small $\overline{\varepsilon}>0$:
 \begin{equation}\label{SG matrix}
  (D_k)^{-1}_{ij}=\max\Big\{\min\Big\{\frac{x_{ij}^k}{\big(V(x^k\big)_{ij}+\overline{\varepsilon}},\frac{1}{{\underline{\mu}}}\Big\},{\underline{\mu}}\Big\}\quad{\rm for}\ i=1,\ldots,m\ {\rm and}\ j=1,\ldots,n.
 \end{equation}


  \noindent
  {\bf VMiPG-BFGS.} When $\mathbb{X}=\mathbb{R}^{m\times n}$, the metric operator $\mathcal{D}_k$ is computed by the so-called 0-memory BFGS strategy that is successfully adopted in \cite{Becker19}. That is, $\mathcal{D}_k^{-1}$ has the following form   
  \begin{equation}\label{0-BFGS}
      \mathcal{D}_k^{-1}(x) = \left\{\begin{array}{cl}
      \gamma_{k-1}^{\rm BB2}\mathcal{V}^*_{k-1}\mathcal{V}_{k-1}(x)+\rho_{k-1}\langle s^{k-1},x\rangle s^{k-1}&{\rm if}\ {c_1}\leq\gamma_{k-1}^{\rm BB2},\gamma_{k-1}^{\rm BB1}\leq c_2,\\
 \mathcal{D}^{-1}_{k-1}(x)&{\rm otherwise},
 \end{array}\right.
  \end{equation}
 where $c_1>0$ and $c_2>0$ are the given constants, $\gamma_{k-1}^{\rm BB1}\!=\!\rho_{k-1}\|s^{k-1}\|_F^2$ and $\gamma_{k-1}^{\rm BB2}=\frac{1}{\rho_{k-1}\|r^{k-1}\|_F^2}$ with $\rho_{k-1}=\frac{1}{\langle{r^{k-1}}, s^{k-1}\rangle},s^{k-1}=x^k-x^{k-1},r^{k-1}=\nabla\!f(x^k)-\nabla\!f(x^{k-1})$ and $\mathcal{V}_{k-1}(x)=x-\rho_{k-1}\langle s^{k-1},x\rangle r^{k-1}$. The two safeguard conditions on $\gamma_{k-1}^{\rm BB1}$ and $\gamma_{k-1}^{\rm BB2}$ in \eqref{0-BFGS} are introduced to ensure that the operators $\mathcal{D}_k$ are self-adjoint and uniformly bounded positive definite; see \cite[Lemma 3.1]{Byrd16}.
 \subsection{Computation of subproblems}\label{sec5.2} 
 We develop a dual ADMM algorithm to solve the subproblems of VMiPG-H, and present the iteration steps of FISTA to solve the subproblems of VMiPG-S and VMiPG-BFGS.

\subsubsection{Implementation details of ADMM}

 Let $\widetilde{g}_2(\cdot)\!:=g_2(\cdot)+({\underline{\mu}}/{2})\|\cdot\|^2$. For each $k\in\mathbb{N}$, write $a_k\!:=x^k-\mathcal{G}_k^{-1}\nabla\!f(x^k)$ and $C_k\!:=\frac{1}{2}(x^k)^{\top}\mathcal{G}_kx^k\!+\!f(x^k)-\langle\nabla\!f(x^k),x^k\rangle$. With these notation, when $\mathcal{G}_k$ takes the form of \eqref{HessGk}, subproblem \eqref{subprobx} can equivalently be written as
\begin{equation}\label{EprobQ}
 \min_{x\in\mathbb{X}}\Theta_k(x)=\frac{1}{2}\|\mathcal{A}_kx\|^2- 
 b_k^{\top}x+g_1(\mathcal{B}x)+\widetilde{g}_2(x)+C_k\ \ {\rm with}\ b_k=\mathcal{G}_ka_k.
\end{equation}
After an elementary calculation, the dual problem of \eqref{EprobQ} takes the following form 
\begin{align}\label{DprobQ}
 &\min_{\xi\in\mathbb{Z},\eta\in\mathbb{X},\zeta\in\mathbb{Y}}
 \Xi_{k}(\xi,\eta,\zeta):=\frac{1}{2}\|\xi\|^2+g_1^*(\zeta)+\widetilde{g}_2^*(\eta)-C_k\nonumber\\
&\qquad {\rm s.t.}\ \ b_k-\mathcal{A}_k^*\xi-\eta-\mathcal{B}^*\zeta=0.
\end{align} 
The strong convexity of \eqref{EprobQ} implies that the strong duality holds for \eqref{EprobQ} and \eqref{DprobQ}. For a given $\rho>0$, the augmented Lagrangian function of problem \eqref{DprobQ} is defined as 
\[
L_{\rho}(\xi,\eta,\zeta;z)\!:=\frac{1}{2}\|\xi\|^2+g_1^*(\zeta)+\widetilde{g}_2^*(\eta)+\langle z,b_k-\mathcal{A}_k^*\xi-\eta-\mathcal{B}^*\zeta\rangle+\frac{\rho}{2}\|b_k-\mathcal{A}_k^*\xi-\eta-\mathcal{B}^*\zeta\|^2.
\]  
The iteration steps of the ADMM for solving the dual problem \eqref{DprobQ} are described as follows.
\begin{algorithm}[h]
 \renewcommand{\thealgorithm}{A}
 \caption{\label{ADMM}{\bf (ADMM for solving problem \eqref{DprobQ})}}
	\textbf{Input:}\ $\rho>0,\tau\in(0,\frac{1+\sqrt{5}}{2}),\gamma=\rho\|\mathcal{B}\|^2$ and an initial $(\zeta^0,z^0)\in{\rm dom}\,g_1^*\times\mathbb{X}$.
	
	\medskip
	\noindent
	\textbf{For} $j=0,1,2,\ldots$ \textbf{do}
	\begin{enumerate}
		\item Solve the following strongly convex problems
		\begin{subnumcases}{}
			(\xi^{j+1},\eta^{j+1})=\mathop{\arg\min}_{\xi\in\mathbb{Z},\eta\in\mathbb{X}}
			L_{\rho}(\xi,\eta,\zeta^j;z^{j}),\nonumber\\
			\zeta^{j+1}=\mathop{\arg\min}_{\zeta\in\mathbb{Y}}
			L_{\rho}(\xi^{j+1},\eta^{j+1},\zeta;z^{j})+\frac{1}{2}\|\zeta-\zeta^j\|_{\gamma\mathcal{I}-\rho\mathcal{B}\mathcal{B}^*}^2.\nonumber 
		\end{subnumcases}
		
		\item Update the multiplier by $z^{j+1}=z^{j}+\tau\rho(b_k-\mathcal{A}_k^*\xi^{j+1}-\eta^{j+1}-\mathcal{B}^*\zeta^{j+1})$.
	\end{enumerate}
	\textbf{end (for)}
\end{algorithm}
\begin{remark}
{\bf(a)} By using the expression of $L_{\rho}$, an elementary calculation yields that 
\begin{subnumcases}{}\label{etaj}
\eta^{j+1}={\rm prox}_{\rho^{-1}\widetilde{g}_2^*}(b_k-\mathcal{A}_k^*\xi^{j+1}-\mathcal{B}^*\zeta^j+z^{j}/\rho),\\
 \label{xij}
\xi^{j+1}=\mathop{\arg\min}_{\xi\in\mathbb{Z}}\Phi_j(\xi): =e_{\rho^{-1}\widetilde{g}_2^*}(b_k-\mathcal{A}_k^*\xi-\mathcal{B}^*\zeta^j+z^{j}/\rho)+\frac{1}{2}\|\xi\|^2,\\
 \label{zetaj}
 \zeta^{j+1}={\rm prox}_{\gamma^{-1}{g}_1^*}\big(\zeta^j+\rho\gamma^{-1}\mathcal{B}(b_k-\mathcal{B}^*\zeta^j-\mathcal{A}_k^*\xi^{j+1}-\eta^{j+1}+z^j/\rho)\big).
\end{subnumcases}  
 The objective function of \eqref{xij} is strongly convex, so $\xi^{j+1}$ is a solution of \eqref{xij} if and only if it is the unique root to the system $\nabla\Phi_j(\xi)=0$, which is semismooth if $g_2$ is definable in an o-minimal structure by combining \cite[Proposition 3.1 (b)]{Ioffe09} with \cite[Theorem 1]{Bolte09}. The Clarke Jacobian \cite{Clarke83} of $\nabla\Phi_j$ is always nonsingular due to its strong monotonicity, and its characterization is available for some specific $g_2$. In view of this, we apply the semismooth Newton method to seeking a root of $\nabla\Phi_j(\xi)=0$. For more details on the semismooth Newton method, see the papers \cite{Qi93,ZhaoST10}.  

 \noindent
 {\bf(b)} During the implementation of Algorithm \ref{ADMM}, we adjust the penalty parameter $\rho$ in terms of the primal and dual violations, which are respectively defined as follows:
 {
 \small
 \[
 {\rm pinf} =\frac{\|\mathcal{A}_k^*\xi^j\!+\!\mathcal{B}^*\zeta^j+\eta^j-b_k\|}{1+\|b_k\|}\ {\rm and}\ {\rm  dinf}=\frac{\sqrt{\|\xi^j\!-\!\mathcal{A}_kz^j\|^2\!+\!\|\zeta^j\!-\!{\rm prox}_{g_1^*}(\mathcal{B}z^j\!+\!\zeta^j)\|^2\!+\!\|\eta^j-{\rm prox}_{\widetilde{g}_2^*}(z^j\!+\!\eta^j)\|^2}}{1+\|b_k\|}.
\]
}
\end{remark}

For each $j\in\mathbb{N}$, let $\widetilde{\eta}^j =b-\mathcal{A}^*_k\xi^j-\mathcal{B}^*\zeta^j$ and $\widetilde{z}^j = \Pi_{{\rm dom}\,g}(z^j)$. From the definition of $\widetilde{g}_2$, it is easy to verify that ${\rm dom}\,\widetilde{g}_2^*=\mathbb{X}$, and then $\widetilde{\eta}^j\in{\rm dom}\,\widetilde{g}_2^*$. From the strong convexity of $\Theta_k$, we conclude that $\Theta_k(z^j)\geq\Theta_k(z^*)=\Xi_k(\xi^*,\eta^*,\zeta^*)\geq\Xi_k(\xi^j,\widetilde{\eta}^j,\zeta^j)$. By Lemma \ref{alemma1} in appendix, if the following conditions are satisfied, $y^k=\widetilde{z}^j$ precisely satisfies the inexactness criterion \eqref{inexact-criterion}:
\begin{equation}\label{stopcond-ADMM}
 \Theta_k(\widetilde{z}^j)<\Theta_k(x^k)\ \ \textrm{and}\ \ \Theta_k(\widetilde{z}^j)-\Xi_k(\xi^j,\widetilde{\eta}^j,\zeta^j)\le{\varepsilon_{k}}\|\widetilde{z}^j-x^k\|^2.
\end{equation}
This inspires us to adopt \eqref{stopcond-ADMM} as the termination condition when solving \eqref{subprobx} with Algorithm \ref{ADMM}.  

\subsubsection{Implementation details of FISTA}
Let $h(y^1,y^2):=g_1(y^1)+g_2(y^2)$ for $(y^1,y^2)\in\mathbb{Y}\times\mathbb{X}$. Note that $g(x)=h(\mathcal{C}x)$ with $\mathcal{C}=[\mathcal{B};\mathcal{I}]$ and $a_k:=x^k-\mathcal{G}_k^{-1}\nabla f(x^k)$. When $\mathcal{G}_k$ takes the form of \eqref{compareGk}, the dual of \eqref{subprobx} is written as
\begin{equation}\label{DprobQ2}	\min_{w\in\mathbb{Y}\times\mathbb{X}}\Xi_k(w):=\frac{1}{2}\|\mathcal{G}_k^{-1}\mathcal{C}^{*}w-a_k\|^2_{\mathcal{G}_k}+h^*(w)-C_k.
\end{equation}
We employ the variant of FISTA proposed in \cite{Chambolle15} with the same extrapolation as in \cite{Bonettini16} to solve \eqref{DprobQ2}, which produces a sequence $\{w^j\}_{j\in\mathbb{N}}$ converging to the solution of \eqref{DprobQ2}. When terminating FISTA at $z^j:=\Pi_{{\rm dom}\,g}(a_k-\!\mathcal{G}_k^{-1}\mathcal{C}^{*}w^j)$ satisfying $\Theta_k(z^j)<\Theta_k(x^k)$ and
$\Theta_k(z^j)-\Xi_k(w^j)\leq \varepsilon_k\|z^j-x^k\|^2$, the vector $y^k=z^j$ will satisfy the inexactness criterion \eqref{inexact-criterion}. 

\subsection{Parameter setting of Algorithm \ref{lsVMiPG}}\label{sec5.3} 

The choice of parameter $\underline{\mu}$ has a significant impact on the performance of VMiPG-H. As shown by Figures \ref{figimage-mu}-\ref{figlasso-mu} below, a larger $\underline{\mu}$ tends to result in more iterations, and a smaller one generally leads to fewer iterations. However, a smaller $\underline{\mu}$ makes the solution of subproblem \eqref{subprobx} become more challenging. As a trade-off, we choose $\underline{\mu}=10^{-5}$ for the subsequent numerical tests. The parameters $\beta$ and $\sigma$ of Algorithm \ref{lsVMiPG} are chosen to be $0.1$ and $3\times 10^{-6}$, respectively. The choice of the sequence $\{\varepsilon_k\}_{k\in\mathbb{N}}$ is provided in the experiments. We terminate the iterations of VMiPG whenever one of the following conditions is satisfied:
\begin{equation}\label{stop-cond}
 \|d^k\|\leq\epsilon^*\ \ {\rm or}\ \  \frac{|F(x^k)-F(x^{k-10})|}{\max\{1,|F(x^k)|\}}\le\tau^*\ \ {\rm or}\ \ k>k_{\rm max},
\end{equation}
 where $\epsilon^*>0$ and $\tau^*>0$ are specified in the corresponding numerical experiments.

 \begin{figure}[h]
\centering
\setlength{\abovecaptionskip}{2pt}
 \subfigure[]{
 \includegraphics[width=0.45\linewidth]{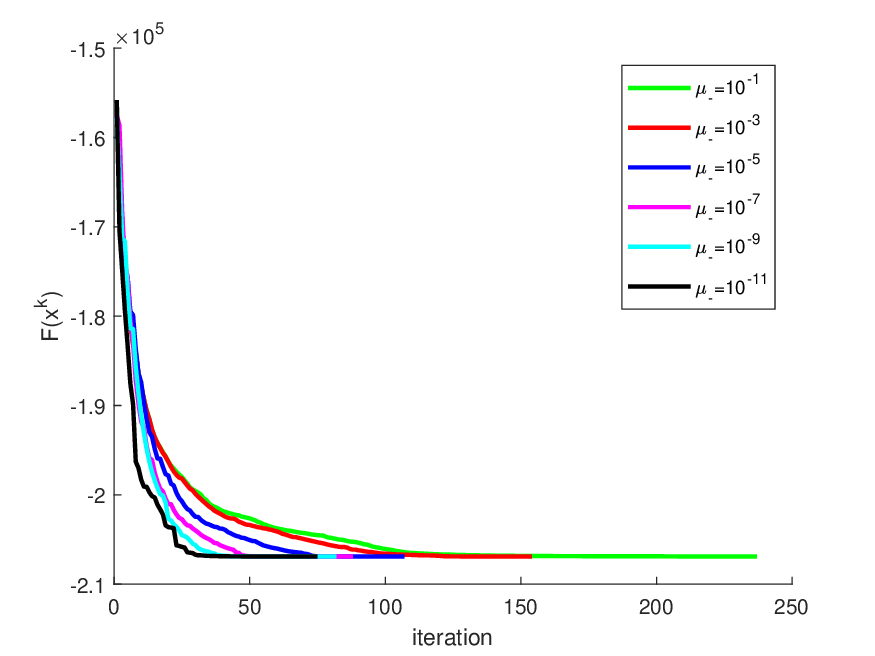}\label{mufig1a}}
 \subfigure[]{
 \includegraphics[width=0.45\linewidth]{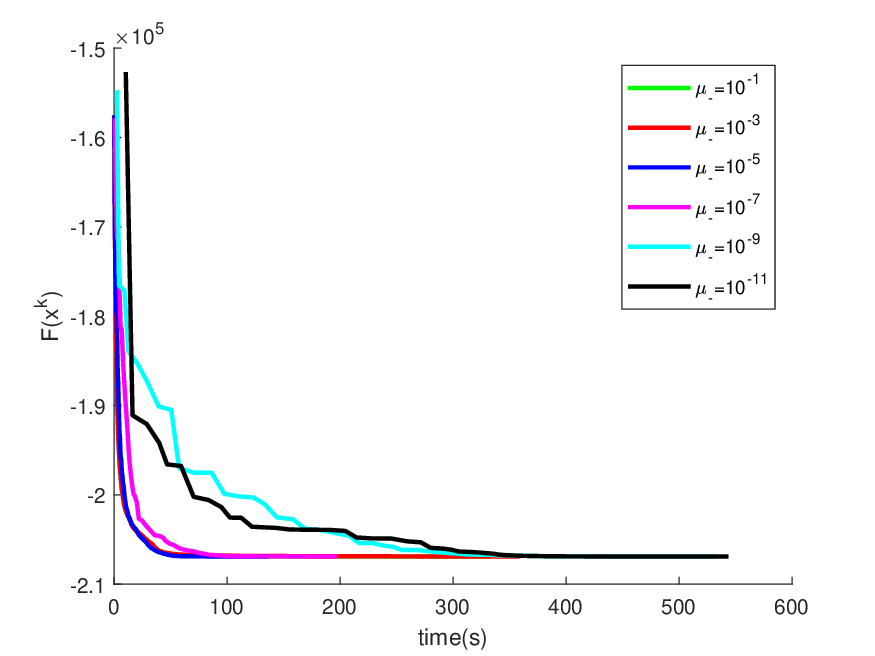}\label{mufig1b}}
 \caption{\small The objective values vary with iterations and time on restoration of noisy and blurred image cameraman}\label{figimage-mu} 
\end{figure}  

\begin{figure}[h]
	\centering
	\setlength{\abovecaptionskip}{2pt}
	\subfigure[]{
		\includegraphics[width=0.45\linewidth]{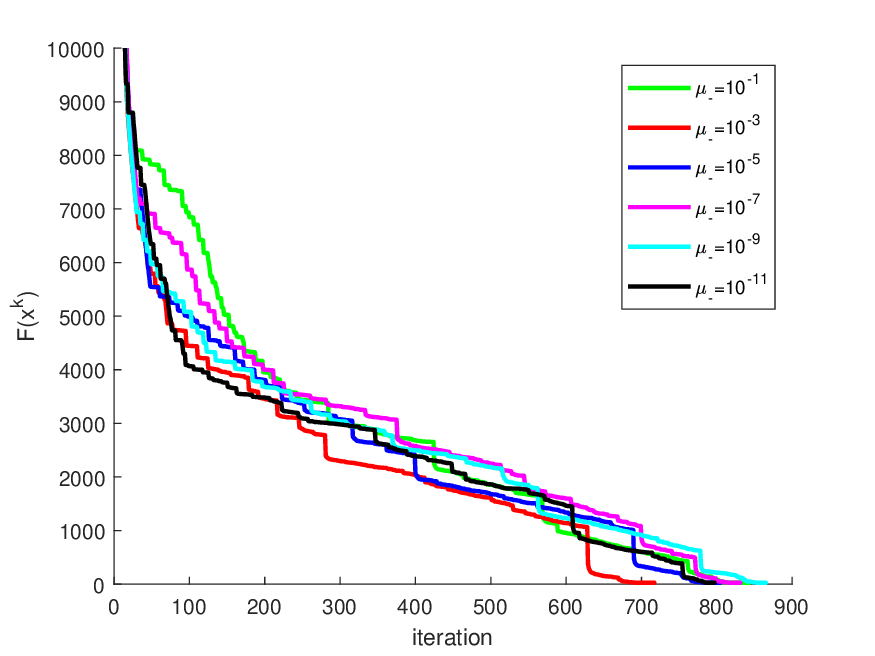}\label{mufig2a}}
	\subfigure[]{
		\includegraphics[width=0.45\linewidth]{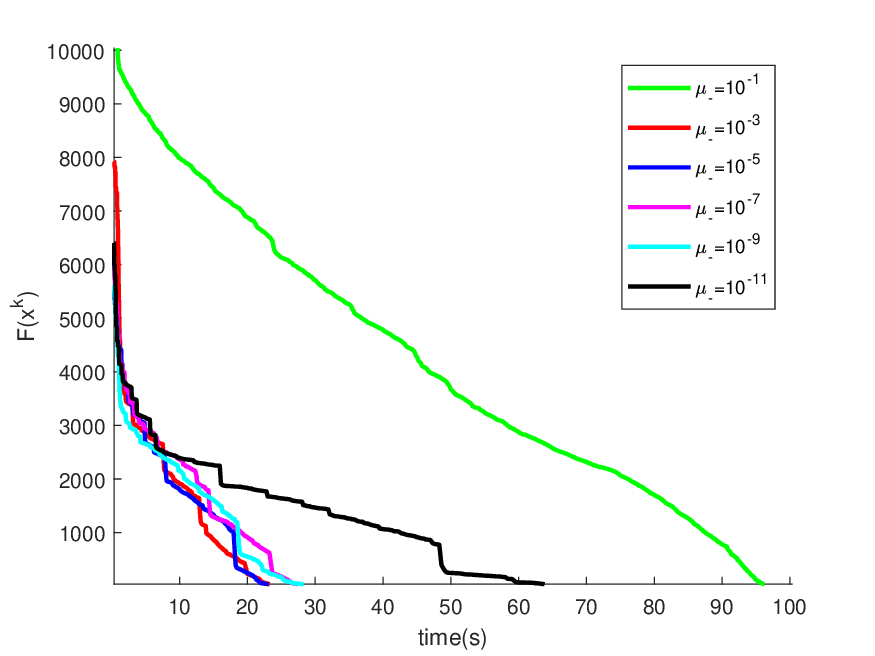}\label{mufig2b}}
	\caption{\small The objective values vary with iterations and time on fused weighted-lasso regressions of ($\Sigma$, err) = (a,I)}\label{figlasso-mu}
 
\end{figure}

 In the subsequent two sections, we test the performance of VMiPG for solving problem \eqref{prob} with $f$ and $g$ from restoration of noisy and blurred images and high-dimensional fused weighted-lasso regressions. 
 All tests are performed in MATLAB on a workstation running on 64-bit Windows Operating System with an Intel Xeon(R) W-2245 CPU 3.90GHz and 128 GB RAM. Our code can be downloaded from \url{https://github.com/SCUT-OptGroup/VMiPG}.
\subsection{Restoration of blurred images with Cauchy noise}\label{sec5.4}

We consider the image deblurring and denoising model proposed in \cite{Bonettini17,Bonettini20}, which takes the form of \eqref{prob} with $f$ and $g$ given by \eqref{fg-fun} for $\mathbb{X}=\mathbb{Z}=\mathbb{R}^{m\times n}$ and $\mathbb{Y}=\mathbb{R}^{2m\times n}$. Among others, 
\[
 \vartheta(u)\!:=\frac{1}{2}\sum_{i=1}^m\sum_{j=1}^n\log(\gamma^2+(u_{ij}-\!b_{ij})^2),\,g_1(z):=\nu\sum_{i=1}^m\sum_{j=1}^n\sqrt{x_{ij}^2+y_{ij}^2}\ {\rm and}\ g_2(x):=\delta_{\mathbb{R}^{m\times n}_+}(x)
 \]
 with constants $\gamma>0$ and $\nu>0$, where $u\in\mathbb{R}^{m\times n},z=[x;y]\in\mathbb{Y}$ and $x\in\mathbb{X}$, and $b\in\mathbb{R}^{m\times n}$ is a blurred and noisy image, the linear operator $\mathcal{A}$ is generated via Matlab Gaussian blur with the $9\times 9$ testing kernel and the variance $1$, 
and $\mathcal{B}(x)\!:=[\mathcal{D}_h(x);\mathcal{D}_v(x)]$ with $\mathcal{D}_h\!:\mathbb{R}^{m\times n}\to\mathbb{R}^{m\times n}$ and $\mathcal{D}_v\!:\mathbb{R}^{m\times n}\to\mathbb{R}^{m\times n}$ denoting the horizontal and vertical 2-D finite difference operator, respectively. Clearly, ${\rm vec}(\mathcal{A}(x))=H{\rm vec}(x)$ for $x\in\mathbb{R}^{m\times n}$, where $H\in\mathbb{R}^{p\times p}$ with $p=mn$ denotes the discretized blurring operator, and $g_1(\mathcal{B}(x))=\sum_{i=1}^{p}\|\nabla_i{\rm vec}(x)\|$ where  $\nabla_i\in\mathbb{R}^{2\times p}$ is the discrete gradient operator at the $i$th pixel which is precisely the famous Total Variation function proposed in \cite{Rudin92}. This model aims to restore images corrupted by the blur operator and Cauchy noise. The blurred and noisy image $b$ is generated via $b=\mathcal{A}(x)+\varpi$, where the noise $\varpi\in\mathbb{R}^{m\times n}$ obeys the Cauchy distribution of density function $p(v)=\frac{\gamma}{\pi(\gamma^2+v^2)}$. For VMiPG-S, we take $\nabla\!f(x)=V(x)-U(x)$ with $V(x)=\mathcal{A}^*\mathcal{A}x\oslash(\gamma^2+(\mathcal{A}(x)-b)^2)$, where ``$\oslash$'' means the elementwise division of matrices and $(\mathcal{A}x-b)^2$ means the elementwise square of 
matrix $\mathcal{A}x-b$. 

In this experiment, the three VMiPG-type methods described in Section \ref{sec5.1} use $\varepsilon_k=10^{7}/k^{1.5}$ for each $k\in\mathbb{N}$. To validate the efficiency of VMiPG, we compare their performance with that of the three VMILA-type methods proposed in \cite{Bonettini17,Bonettini20} with the same variable metric operators as for VMiPG, which are named VMILA-H, VMILA-S and VMILA-BFGS, respectively. Recall that the parameter $\tau_k$ in the inexactness criterion of \cite{Bonettini20} needs to satisfy $\sum_{k=1}^{\infty}\sqrt{\tau_k}<\infty$, so we  take $\tau_k= 10^{7}/k^{2.1}$ for each $k\in\mathbb{N}$. We use the same line-search parameters as for VMiPG. The involved subproblem \eqref{subprobx} is computed with the same solvers as for VMiPG.

All solvers adopt the stopping criterion \eqref{stop-cond} with $\epsilon^*=10^{-4}$, $\tau^*\!=10^{-6}$ and $k_{\rm max}=10^3$. Table \ref{tab-image} reports the average number of iterations, objective values, PSNR values and running time (in seconds), where the inner column lists the total number of inner iterations for solving subproblems and the ls column reports the number of backtracking in the line-search step. We conducted $10$ independent trials from the starting point $x^0=\max(0,b)$, for which the parameter $\gamma$ in $\vartheta$ is chosen to be $0.02$, and the parameter $\nu$ in $g_1$ is set as $1/0.35$. 
From Table \ref{tab-image}, the six solvers return almost the same PSNR values and comparable objective values. When comparing the Hessian-type VMiPG (VMILA) method with the other two types of VMIPG (VMILA) methods, VMiPG-H (VMILA-H) requires fewer iterations but more running time, the number of inner iterations of VMiPG-H is comparable with that of VMiPG-S but much less than that of VMiPG-BFGS, while the number of inner iterations of VMILA-H is more than that of VMILA-S but less than that of VMILA-BFGS. The most running time of VMiPG-H (VMILA-H) is attributed to the fact that its subproblem solver requires more time than and the construction of variable metric operator in \eqref{HessGk} needs more computation cost. When VMiPG and VMILA use the same metric operator, their running time mainly depend on the number of inner iterations. VMiPG-S consumes more time than VMILA-S, but VMiPG-H requires less time than VMILA-H, and VMiPG-BFGS and VMILA-BFGS perform comparably. This means that the inexactness criterion \eqref{inexact-criterion} is more suitable for VMiPG-H, while the inexactness criterion \eqref{Binexact} is more suitable for VMILA-S by considering that the main difference between VMiPG and VMILA is the inexactness criterion and the line-search condition, while their numbers of backtracking are close. 
\begin{table}[H]
	\centering
	\setlength{\tabcolsep}{0.7pt}
	\setlength{\belowcaptionskip}{0.7pt}
	\fontsize{8}{13}\selectfont
	\caption{Numerical comparison of VMiPG and VMILA on restoration of noisy and blurred images}
	\label{tab-image}
	\resizebox{1.0\textwidth}{!}{
		\begin{tabular}{c|cccccc|cccccc|cccccc}
			\hline
			\multirow{2}{*}{image}&\multicolumn{6}{c|}{cameraman} & \multicolumn{6}{c|}{house} & \multicolumn{6}{c}{pepper} \\
			\cline{2-19}
			\multirow{2}{*}{} &iter & Fval & PSNR & time& inner&ls &iter & Fval & PSNR & time &inner&ls&iter& Fval & PSNR & time&inner&ls\\
			\hline
			{VMiPG-H}
			&164 & -207136.88 & 26.44 & 327 &2300 & 84
			&163 & -208767.52 & 30.46 & 311 &2310 & 85
			&158 & -207128.89 & 28.82 & 311 &2064 & 82\\ \hline
			{VMILA-H}
			&190 & -207136.88 & 26.45 & 580 & 2846 &62
			&187 & -208767.08 & 30.46 & 587 & 2981 &60
			&182 & -207128.55 & 28.83 & 575 & 2658 &64\\ \hline
   \hline
			{VMiPG-S}
			& 186 & -207136.41 & 26.45 & 10 &3042&52
			& 149 & -208767.59 & 30.47 & 4 &1056&21
			& 176 & -207127.89 & 28.83 & 7 &1934&35	 \\ \hline
			{VMILA-S}
			& 229 & -207136.37 & 26.45 & 7 &1752 &55
			& 209 & -208767.27 & 30.47 & 3 &769&32
			& 226 & -207128.10 & 28.83 & 5 &1116&43\\   \hline
   \hline
			{VMiPG-BFGS}
			&591 & -207138.03 & 26.44 & 100 &23146&4
			&451 & -208767.99 & 30.47 & 88 &20125 &6
            &445 &-207128.61 & 28.83  & 78 &18273&3 \\ \hline
			{VMILA-BFGS}
			& 621 & -207137.96 & 26.44 & 118 &27517 &5
			& 471 & -208768.19 & 30.47 & 87 &19984&4
			& 476 & -207128.86 & 28.83 & 88 &20556&4 \\  \hline
		\end{tabular}
	}
 
\end{table}

 
\begin{figure}[h]
	\centering
	\setlength{\abovecaptionskip}{1pt}
	\subfigure[]{
		\includegraphics[width=0.17\linewidth]{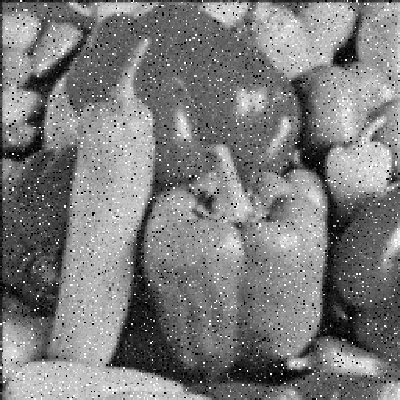}\label{fig2a}}
	\subfigure[]{
		\includegraphics[width=0.17\linewidth]{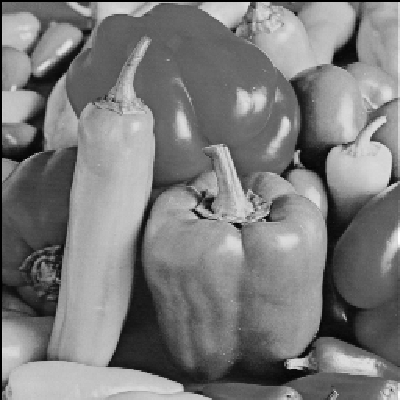}\label{fig2b}}
	\subfigure[]{
		\includegraphics[width=0.17\linewidth]{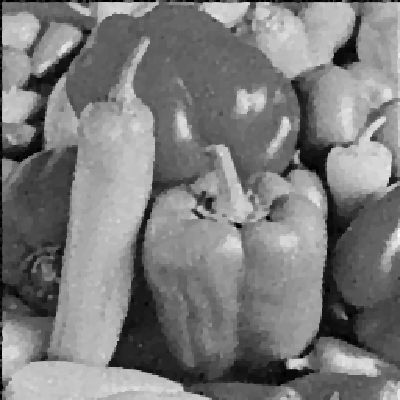}\label{fig2c}}
	\subfigure[]{
		\includegraphics[width=0.17\linewidth]{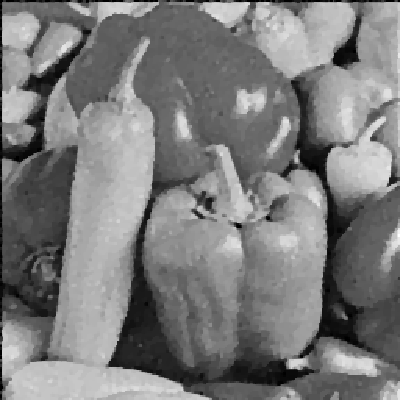}\label{fig2d}}
	\subfigure[]{
		\includegraphics[width=0.17\linewidth]{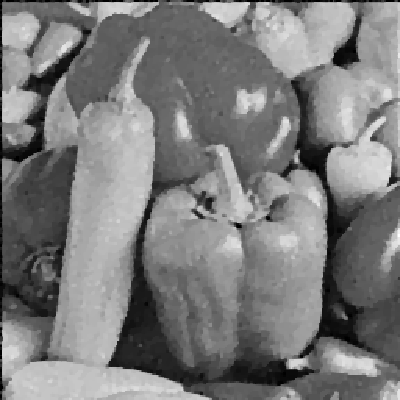}\label{fig2e}}
	\caption{\small Restoration results of the three VMiPG solvers for image  pepper. (\textbf{a}) blurred and noisy image. (\textbf{b}) original clear image. (\textbf{c}) VMiPG-H. (\textbf{d}) VMiPG-S. (\textbf{e}) VMiPG-BFGS}\label{fig2}
\end{figure}  
\subsection{Fused weighted-lasso regressions}\label{sec5.5}

We consider a high-dimensional fused weighted-lasso regression model of the form \eqref{prob} with $f$ and $g$ given by \eqref{fg-fun} for $\mathbb{X}=\mathbb{Y}=\mathbb{R}^n,\mathbb{Z}=\mathbb{R}^m$ and $m\ll n$. Among others, $\mathcal{A}x=Ax$ for a data matrix $A\in\mathbb{R}^{m\times n}$ and $\mathcal{B}x=Bx$ for a matrix $B\in\mathbb{R}^{(n-1)\times n}$ with $Bx=[x_1-x_2,x_2-x_3,\ldots,x_{n-1}-x_n]^{\top}$, $\vartheta(u):=\sum_{i=1}^{m}\ln(1+(u_i\!-\!b_i)^2/\gamma)$ for $u\in\mathbb{R}^m$ with a constant $\gamma>0$ and an observation vector $b\in\mathbb{R}^m$, and $g_1(x) =\nu_1\|x\|_1$ and $g_2(x)=\nu_2\|\omega\circ x\|_1$ for $x\in\mathbb{R}^n$ where $\nu_1>0$ and $\nu_2>0$ are the regularization parameters, and `` $\circ$'' means the component wise product. The function $\vartheta$ was introduced in \cite{Aravkin12} to deal with the data contaminated by heavy-tailed Student-t noise, and we choose $\gamma=0.1$ for synthetic data and $\gamma=0.5$ for real data. When $\omega=e$, the vector of all ones, the function $g$ is the famous fused-lasso penalty \cite{Tibshirani05} and has a closed-form proximal mapping by \cite{LiST182}. In our numerical tests, $\omega$ is a vector with entries from $(0,1)$, and it is unclear whether the associated $g$ has a closed-form proximal mapping or not. Such a fused weighted-lasso model arises from joint estimation of multiple gene networks \cite{Wu19}. Let $\widetilde{g}_1(x)\!:=g_1(Bx)$ for $x\in\mathbb{R}^n$. The proximal mapping of  $\widetilde{g}_1$ or $\widetilde{g}_1^*$ can be computed by an efficient algorithm in \cite{Condat13}. The parameters $\nu_1=\alpha_1\|A^{\top}b\|_{\infty}$ and $\nu_2=\alpha_2\|A^{\top}b\|_{\infty}$ are used for the subsequent tests, where the coefficients $\alpha_1\!>0$ and $\alpha_2\!>0$ are specified in the experiments.

We conduct numerical tests for VMiPG-H and VMiPG-BFGS with $\varepsilon_k=10^{6}/k^{0.5}$, and compare their performance with that of VMILA-H and VMILA-BFGS under the inexactness criterion \eqref{Binexact} with $\tau_k={10^{10}}/{k^{2.1}}$ \textcolor{blue}{$\tau=10^{10}$} and the same line-search parameters as for VMiPG. The subproblems of VMiPG and VMILA are solved with the same solvers introduced in Section \ref{sec5.2}. Note that the variable metric operators of VMiPG-S and VMILA-S in Section \ref{sec5.4} are positive definite due to the positive constraint and the properties of operator $H$, but such a property does not hold for the experiment of this section, so we do not conduct numerical tests for the two solvers.
\subsubsection{Numerical results for synthetic data}\label{sec5.5.1}

The synthetic data is generated by the observation model $b=Ax_{\rm true}+\varpi$, where each row of $A$ obeys the Gaussian distribution $N(0,\Sigma)$, and the noise vector $\varpi$ has i.i.d. entries. We test the performance of all solvers for eight examples of $(m,n)=(200,5000)$ with sparse noises. 
The true solution $x_{\rm true}$ is generated in a similar way to \cite{LinSun19}, i.e., 
\[
x_{\rm true}=(\underbrace{x^*;\cdots;x^*}_{10})\ \ {\rm with}\ x^*\!=\!(0,0,-1.5,-1.5,-2,-2,0,0,1,1,4,4,4,\underbrace{0,0,\ldots,0}_{237})^{\top}.
\]
We choose the weight vector $\omega\in\mathbb{R}^{n}$ with $\omega_i=0.9$ if $[x_{\rm true}]_i=0$ and $\omega_i=0.1$ if $[x_{\rm true}]_i\ne 0$. The function $g_2$ with such a weight vector $\omega$ has a behavior similar to that of the weighted $\ell_1$-norm induced by an equivalent DC surrogate of the zero-norm (see \cite{ZhangPan23}). The sparsity of the noise vector $\varpi$ is set to be $\lfloor 0.3m\rfloor$. The noise vector $\varpi$ comes from the distributions used in \cite{Gu18}, including {\bf (I)} the normal distribution $N(0,4)$, {\bf (II)} the scaled Student's $t$-distribution with $4$ degrees of freedom $\sqrt{2}\times t_4$, {\bf (III)} the mixture normal distribution $N(0,\sigma^2)$ with $\sigma\!\sim\!{\rm Unif}(1,5)$, {\bf (IV)} the Laplace distribution with density $d(u)=0.5\exp(-\vert u\vert)$. We consider two kinds of covariance matrices {\bf (a)} $\Sigma=(0.3^{\vert i-j\vert})$ and {\bf (b)} $\Sigma=(0.5^{\vert i-j\vert})$. 

All solvers use the stopping criterion \eqref{stop-cond} with $\epsilon^*=10^{-7},\tau^*\!=10^{-6}$ and $k_{\rm max}=10^{5}$ for VMiPG and VMILA. Table \ref{tab-wflasso} reports the average number of iterations, objective values, number of non-zero components and groups, and running time (in seconds). We use ``${\rm xnz}$'' and ``${\rm Bxnz}$'' to denote the number of non-zero components of a solution $x^f$ and the associated vector $Bx^f$, respectively. For a vector $x$, the number of its non-zero components is defined by the estimation 
$\min\big\{k\ |\ \sum_{i=1}^k|\widehat{x}_i|\ge 0.999\|x\|_1\big\}$, 
where $\widehat{x}$ is obtained by sorting $x$ such that $|\widehat{x}_1|\geq\cdots\geq |\widehat{x}_n|$.
We conducted $5$ independent trials from the starting point $x^0=A^{\top}b$ for every test example. 
The coefficients $\alpha_1$ and $\alpha_2$ involved in $\nu_1$ and $\nu_2$ are respectively chosen to be $5\times10^{-7}$ and $5\times10^{-4}$. 

From Table \ref{tab-wflasso}, we see that VMiPG-H (respectively, VMILA-H) outperforms VMiPG-BFGS (respectively, VMILA-BFGS). It yields lower objective values and fewer number of non-zero components and groups, and requires fewer inner iterations and less running time. This shows that the Hessian-type metric operator has a remarkable advantage over the BFGS-type one when dealing with such high-dimensional statistical regression problems. VMiPG-H (respectively, VMiPG-BFGS) has a comparable performance with VMILA-H (respectively, VMILA-BFGS). 

\begin{table}[h]
	\centering
	\setlength{\tabcolsep}{1.2pt}
	\setlength{\belowcaptionskip}{1.2pt}
	\fontsize{7}{9}\selectfont
     \scriptsize
	\caption{Numerical comparison for fused weighted-lasso regression on synthetic data with $(m,n)=(200,5000)$.}
	\label{tab-wflasso}
	\resizebox{1.0\textwidth}{!}{
		\begin{tabular}{c|ccccccc|ccccccc}
			\hline
			&\multicolumn{7}{c|}{VMiPG-H} & \multicolumn{7}{c}{VMILA-H} \\
			\hline
			($\Sigma$,{\rm err}) &iter & Fval& time &xnz &Bxnz  & inner&ls  &iter & Fval& time &xnz &Bxnz  & inner&ls   \\
			\hline
			(a,I)
			&804 & 26.19 & 42 &  170 &290&1212 &3847
			&833 & 26.19 & 44  &  169 &290&1297&3932 \\ \hline
			(a,II)
			&826 & 26.28 & 41 & 171 & 293&1234&3991
			&843 & 26.28 & 44 & 171 & 292&1314&4035\\ \hline
			(a,III)
			&857 & 28.97 & 42 & 177 & 298&1295&4111
			&870 & 28.97 & 46 & 177 & 298&1345&4151\\ \hline
			(a,IV)
			&858 & 22.54 & 42 & 158 & 269&1301&4240
			&900 & 22.54 & 46 & 158 & 269&1399&4408\\ \hline
            (b,I)
			&757 & 39.20 & 41 & 167 & 279  &1166&3599
			&773 & 39.20 & 44 & 167 & 279 &1208&3599\\ \hline
			(b,II)
			&875 & 27.01 & 41 & 170 & 290  &1242&4107
			&868 & 27.01 & 45 & 169 & 288  &1313&4096\\ \hline
            			(b,III)
			&852 & 32.04 & 42 & 175 & 291  &1298&4003
			&845 & 32.04 & 46 & 175 & 298  &1345&3924\\ \hline
            			(b,IV)
			&803 & 25.71 & 40 & 158 &  277&1194&3843
			&805 & 25.71 & 41 & 159 &  277&1238&3825\\ \hline
   \hline
   &\multicolumn{7}{c|}{VMiPG-BFGS} & \multicolumn{7}{c}{VMILA-BFGS} \\
			\hline
   ($\Sigma$,{\rm err}) &iter & Fval& time &xnz &Bxnz  & inner&ls  &iter & Fval& time &xnz &Bxnz & inner&ls   \\
			\hline
			(a,I)
			&10462 & 26.21  & 1145 &  192 &344 &458239&624
			&11590 & 26.19  & 1273 &  180 &314 &512914&628\\ \hline
			(a,II)
			&10124 & 26.30 & 1092 & 190 & 338  &431014 & 530
			&11332 & 26.29 & 1247 & 181 & 315  &492512 & 549\\ \hline
			(a,III)
			&11788 & 29.00 & 1417 & 193 & 332&551988&611
		  &12300 & 28.99 & 1469 & 188 & 322&583232&608\\ \hline
			(a,IV)
			&10669 & 22.55 & 1200 & 187 &  343&481859&407
			&12256 & 22.55 & 1448 & 171 &  298&589033&420\\ \hline
            (b,I)
			&24554 & 39.21 & 3104 & 178 & 306 &1268017&798
			&23636 & 39.22 & 3121 & 178 & 311 &1273971&775\\ \hline
			(b,II)
			&23922 & 27.03 & 3764 & 181 & 323  &1427421&935
			&26695 & 27.03 & 4053 & 182 & 319 &1580112&870\\ \hline
            (b,III)
			&22017 & 32.08 & 3017 & 194 & 346  &1228357&932
			&24022 & 32.09 & 3796 & 192 & 334  &1492240&911\\ \hline
            (b,IV)
            &13435 & 25.72 & 1648 & 169 & 307  &669464&789
			&14728 & 25.72 & 1802 & 173 & 306  &730484&728\\ \hline

		\end{tabular}
	}
\end{table}
 
When comparing the results of Table \ref{tab-wflasso} with those of Table~\ref{tab-image}, VMiPG-H and VMILA-H exhibit much better performance for high-dimensional fused weighted-lasso regression problems. Together with the numerical results in Table \ref{tab-wflasso2}, we attribute this improvement to the following reason: for fused weight-lasso regression problems, the dimension of variables for the subproblem solver is $m$, which is significantly smaller than the dimension $n$ of variables appearing in FISTA. This leads to a substantial reduction in computational cost for VMiPG-H and VMILA-H, whereas VMiPG-BFGS and VMILA-BFGS do not benefit from such an advantage. Notably, VMiPG-BFGS and VMILA-BFGS from Table~\ref{tab-wflasso} require much more iterations than those from Table~\ref{tab-image}, which implies that fused weighted-lasso regressions are much more difficult than noisy and blurred image restorations. The advantage of the Hessian-type metric operator in reducing the number of iterations is fully reflected by these challenging problems. 

\begin{table}[h]
	\centering
	\setlength{\tabcolsep}{1.2pt}
	\setlength{\belowcaptionskip}{1.2pt}
	\fontsize{7}{9}\selectfont
    \scriptsize
	\caption{Numerical comparison for fused weighted-lasso regression on synthetic data with $(m,n)=(500,500)$.}
	\label{tab-wflasso2}
	\resizebox{1.0\textwidth}{!}{
		\begin{tabular}{c|ccccccc|ccccccc}
			\hline
			&\multicolumn{7}{c|}{VMiPG-H} & \multicolumn{7}{c}{VMILA-H} \\
			\hline
			($\Sigma$,{\rm err}) &iter & Fval& time &xnz &Bxnz  & inner&ls  &iter & Fval& time &xnz &Bxnz  & inner&ls   \\
			\hline
   			(a,I)
			&808 & 208.39 & 16 &  445 &473&1659&2680
			&809 & 207.25 & 19 & 449 &474&1692&2672 \\ \hline
			(a,II)
			&778 & 191.59 & 16 & 445 & 471&1668&2486
			&767 & 191.06 & 18 & 446 & 473&1544&2460\\ \hline
			(a,III)
			&821 & 280.76 & 17 & 455 & 477&1696&2713
			&829 & 290.36 & 20 & 454 & 477&1758&2732\\ \hline
			(a,IV)
			&790 & 158.77 & 16  & 443&470&1719&2594
			&759 & 156.82 & 16  & 440&468&1419&2550\\ \hline
			(b,I)
			&705 & 257.22 & 15 &  441 &470&1550&2188
			&706 & 243.96 & 16  & 443 &472&1438&2179 \\ \hline
			(b,II)
			&726 & 243.98 & 15 & 443 & 474&1461&2199
			&706 & 239.95 & 17 & 442 & 475&1471&2176\\ \hline
			(b,III)
			&748 & 358.86 & 17 & 445 & 474&1602&2305
			&748 & 342.26 & 18 & 447 & 477&1592&2304\\ \hline
			(b,IV)
			&707 & 185.51 & 16  & 426&468&1780&2185
			&682 & 189.62 & 14  & 435&471&1294&2184\\ \hline
   \hline
   &\multicolumn{7}{c|}{VMiPG-BFGS} & \multicolumn{7}{c}{VMILA-BFGS} \\
			\hline
   ($\Sigma$,{\rm err}) &iter & Fval& time &xnz &Bxnz  & inner&ls  &iter & Fval& time &xnz &Bxnz & inner&ls   \\
			\hline
   			(a,I)
			&5452 & 200.35  & 11 &  450 &473 &31737&388
			&5420 & 198.93  & 10 &  449 &474 &28411&398\\ \hline
			(a,II)
			&4367 & 195.86 & 9 & 445 & 473  &25614 & 289
			&4424 & 193.37 & 9 & 443 & 473  &26687 & 279\\ \hline
			(a,III)
			&7823 & 290.05 & 18 & 446 & 475&56321&455
		  &7290 & 298.31 & 15 & 449 & 475&47147&432\\ \hline
			(a,IV)
			&2466 & 185.64 & 12 & 428 &  469&4843&146
			&2351 & 183.81 & 13 & 426 &  468&4743&145\\ \hline
			(b,I)
			&5435 & 244.34  & 12 &  437 &474 &36935&306
			&5887 & 241.86  & 13 &  440 &474 &40014&333\\ \hline
			(b,II)
			&4569 & 236.60 & 6 & 442 & 472  &19372 & 308
			&4604 & 233.97 & 6 & 436 & 472  &18190 & 357\\ \hline
			(b,III)
			&5227 & 337.69 & 10 & 438 & 471&30888&331
		  &5896 & 338.75 & 12 & 440 & 471&37300&340\\ \hline
			(b,IV)
			&5117 & 185.64 & 12 & 428 &  469&35960&319
			&5088 & 183.81 & 13 & 426 &  468&39277&338\\ \hline

		\end{tabular}
	}
\end{table}

\subsubsection{Numerical results for real data}\label{sec5.5.2}

We test the performance of VMiPG and VMILA on large-scale problems with the LIBSVM datasets from \url{https://www.csie.ntu.edu.tw}. For those data sets with a few features, such as \textbf{pyrim5} and \textbf{bodyfat7}, we use the same technique as in \cite{LiST181} to expand their original features with polynomial basis functions over those features. The four solvers use the stopping criterion \eqref{stop-cond} with $\epsilon^*=10^{-7},\tau^*\!=10^{-6}$ and $k_{\rm max}=10^{5}$. 
Table \ref{tab-real} reports the number of iterations, objective values, number of non-zero entries, number of non-zero groups, and running time (in seconds) from the starting point $x^0=A^{\top}b$, where the ``inner'' and ``ls'' columns have the same meaning as before. For each data, we choose $\alpha_1$ and $\alpha_2$ to produce reasonable number of non-zero entries and groups. The weight vector $\omega$ is generated randomly via MATLAB command ``${\rm rand}$''. 

We see that VMiPG-H (VMILA-H) outperforms VMiPG-BFGS (VMILA-BFGS) in terms of the objective values, running time, and the number of non-zero components and groups. For data \textbf{pyrim5}, \textbf{bodyfat7} and \textbf{triazines4}, VMiPG-H requires much fewer iterations than VMILA-H. This is attributed to the inexactness criterion used in VMiPG-H, which is more restricted than the one used in VMILA-H for these data because the number of inner iterations of VMiPG-H is more than that of VMILA-H. However, fewer outer iterations does not necessarily lead to less running time; for example, in the setting marked in black, VMiPG-H requires a comparable running time with VMILA-H though it requires less outer iterations than VMILA-H.

\begin{table}[h]
	\centering
	\setlength{\tabcolsep}{1.5pt}
	\setlength{\belowcaptionskip}{1.5pt}
	\fontsize{7}{10}\selectfont
	\caption{Numerical comparison for fused weighted-lasso regression on real data.}
	\label{tab-real}
	\resizebox{1.0\textwidth}{!}{
		\begin{tabular}{c|c|ccccccc|ccccccc}
			\hline
			&&\multicolumn{7}{c|}{VMiPG-H} & \multicolumn{7}{c}{VMILA-H} \\
			\hline
			dataset&$\alpha_1,\alpha_2$&iter & Fval& time &xnz &Bxnz  & inner&ls  &iter & Fval& time &xnz &Bxnz  & inner&ls   \\
			\hline
   		{{E2006.test}}
			&$10^{-6},10^{-4}$
            &171 & 507.63 & 19 &  2 &3&192&25
			&172 & 507.63 & 18 & 2 &3&192&25 \\ \cline{2-16}
			
			$3008\times 72812$&$10^{-6},10^{-5}$
            &579 & 503.80 & 46 & 8 & 15&598&26
			&579 & 503.81 & 46 & 8 & 15&589&26\\ \hline
			{{mpg7}}
			&$10^{-5},10^{-4}$
            &710 & 494.91 & 61 & 124 & 207&1042&107
			&618 & 494.91 & 55 & 124 & 207&917&107\\ \cline{2-16}
			
			$392\times3432$&$\mathbf{10^{-4},10^{-4}}$
            &298 & 660.70 & 32  & 133&85&924&55
			&345 & 660.71 & 30  & 142&87&876&49\\ \hline
			{{pyrim5}}
			&$\textbf{0.05,50}$
            &79 & 11.72 & 722 &  8 &16 &1478&38
			&270 &11.73 & 730  & 8 &16 &737&31 \\ \cline{2-16}
			
			$74\times201376$&$0.5,50$
            &46 & 35.57 & 348 & 2 & 4&1572&72
			&1292 & 35.74 & 1127 & 2 & 4&1473&0\\ \hline
			{{bodyfat7}}
			&$10^{-4},10^{-2}$
            &180 & 1.16 & 1046 & 11 & 21&1484&184
			&476 & 1.16 & 1108 & 11 & 21&782&163\\ \cline{2-16}
			
			$252\times 116280$&$10^{-3},10^{-2}$
            &78 & 1.44 & 622  & 3&5&908&56
			&2752 &1.44 & 1991  &3 &5&3913&75\\ \hline
   			{triazines4}
			&$\mathbf{10^{-2},10^{2}}$
            &91 & 10.82 & 7240 & 27 & 54&1081&82
			&204 & 10.82 &6584 & 27 & 54&1114&66\\ \cline{2-16}
			
			$182\times557458$&$10^{-1},10^{2}$
            &56 & 31.86 & 3405  & 11&22&874&37
			&683 & 31.99 & 4100  & 11&22&1073&25\\ \hline
   \hline
   & &\multicolumn{7}{c|}{VMiPG-BFGS} & \multicolumn{7}{c}{VMILA-BFGS} \\
			\hline
   dataset&$\alpha_1,\alpha_2$ &iter & Fval& time &xnz &Bxnz  & inner&ls  &iter & Fval& time &xnz &Bxnz & inner&ls   \\
			\hline
   		{E2006.test}
			&$10^{-6},10^{-4}$
            &1006 & 508.38 & 427 & 175 &351 & 63314 & 110
			&1529 & 508.45 & 524 & 159 &319 & 92025 & 95 \\ \cline{2-16}
			
			$3008\times 72812$&$10^{-6},10^{-5}$
            &3981 & 505.08 & 3517 & 383 &770&398779&146
			&2834 & 505.19 & 2484 & 310 &628&269672&153\\ \hline
			{mpg7}
			&$10^{-5},10^{-4}$
            &710 & 494.91 & 61 & 124 & 207 &1042&107
			&618 & 494.91 & 55 & 124 & 207 &917&107\\ \cline{2-16}
			
			$392\times3432$&${10^{-4},10^{-4}}$
            &3979 & 661.36 & 53  & 175 & 100 & 87577 & 92
			&5159 & 661.14 & 61  & 173 & 96 & 96539 & 114\\ \hline
			{pyrim5}
			&${0.05,50}$
            &271 & 884.38 & 2496 & 181 & 361&100372&17
			&1458 &596.45 & 1376 & 154 & 307&49561&74\\  \cline{2-16}
			
			$74\times201376$&$0.5,50$
            &688 &1907.73 & 2159 & 88 & 176&106890&17
			&283 &2092.87 & 170 & 72 & 144 &8595 &148\\ \hline
			bodyfat7
			&$10^{-4},10^{-2}$
            &11078 & 1.35 & 10000 & 135& 274&675532&160
			&9092  & 1.28 & 7512 & 116 & 233&396920&295\\ \cline{2-16}
			
			$252\times 116280$&$10^{-3},10^{-2}$
            &6550 & 1.44 & 3874  & 3&5&407627&162
			&3261 & 1.44 & 1054  & 3&5&92402&45\\ \hline
   			triazines4
			&${10^{-2},10^{2}}$
            &57 & 2029.16 & 1668 & 92 & 184&18530&152
			&57 & 2029.16 & 1665 & 92 & 184&18530&152\\ \cline{2-16}
			
			$182\times557458$&$10^{-1},10^{2}$
            &464 & 5134.40 & 10000  & 193&385&193993&60
			&682 & 4063.12 & 10000  & 83 &166&186464&65\\ \hline
		\end{tabular}
	}
\end{table}

\section{Conclusions}\label{sec6}

 For the nonconvex and nonsmooth problem \eqref{prob}, we proposed a line-search based VMiPG method by an inexactness criterion without involving the proximal mapping of $g$, and established the convergence of the iterate sequence when $f$ and $g$ are definable in the same o-minimal structure over the real field $(\mathbb{R},+,\cdot)$ but without the Lipschitz assumption of $\nabla\!f$ on ${\rm dom}\,g$. In particular, the convergence is shown to possess a local R-linear rate when the potential function $\Phi_{\gamma}$ has the KL property of exponent $1/2$, which is proved to hold if $F$ is a KL function of exponent $1/2$. Numerical comparisons demonstrate that VMiPG-H (VMILA-H) is significantly superior to VMILA-BFGS (VMiPG-BFGS) in terms of the quality of solutions and the running time for the fused weighted-lasso regressions, while for the restoration of blurred images, they require more running time than VMiPG-S (VMILA-S) and VMiPG-BFGS (VMILA-BFGS). Further exploration for other efficient variable metric linear operators is an interesting topic.


\medskip
\noindent
{\bf Data availability} The generated during and/or analysed during the current study are available in the LIBSVM datasets (\url{https://www.csie.ntu.edu.tw}).

\medskip
\noindent
{\bf\large Declarations} 

\medskip
\noindent
{\bf Conflict of interest} The authors declare that they have no conflict of interest.
\bibliographystyle{spmpsci}
\bibliography{references}
\section*{Appendix}
\appendix
\renewcommand{\thelemma}{A.\arabic{lemma}} 
\setcounter{lemma}{0} 

\begin{lemma}\label{alemma1}
Fix any $k\in\mathbb{N}$. Let $(\xi^*,\eta^*,\zeta^*)$ and $z^*$ be respectively an optimal solution of \eqref{DprobQ} and \eqref{EprobQ}. Suppose that $g_1^*$ is continuous relative to its domain, and that $\{\xi^j,\eta^j,\zeta^j,z^j\}_{j\in\mathbb{N}}$ converges to $(\xi^*,\eta^*,\zeta^*,z^*)$. Then, the condition in \eqref{stopcond-ADMM} is satisfied when $j$ is large enough.
\end{lemma}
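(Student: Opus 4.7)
The plan is to exploit strong duality between the primal subproblem \eqref{EprobQ} and its dual \eqref{DprobQ} in order to reduce the inexactness check \eqref{stopcond-ADMM} to a convergence-of-primal-dual-objectives argument, together with a continuity argument to show that the dual feasibility violation disappears in the limit. Throughout we may assume $z^*\ne x^k$, since otherwise $x^k$ is already the unique minimizer of the strongly convex $\Theta_k$, $d^k=0$, and Algorithm \ref{lsVMiPG} terminates at $x^k$ by Remark \ref{remark-Alg}(d).

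First I would handle the primal side. Since $z^*\in{\rm dom}\,g$ (as it is the unique minimizer of $\Theta_k$) and $z^j\to z^*$, the non-expansiveness of $\Pi_{{\rm dom}\,g}$ gives $\widetilde{z}^j=\Pi_{{\rm dom}\,g}(z^j)\to z^*$. By Assumption \ref{ass0}(i)--(ii) the function $\Theta_k$ is continuous relative to ${\rm dom}\,g$, whence $\Theta_k(\widetilde z^j)\to\Theta_k(z^*)$. Strong convexity of $\Theta_k$ together with $z^*\ne x^k$ yields the strict inequality $\Theta_k(z^*)<\Theta_k(x^k)$, so $\Theta_k(\widetilde z^j)<\Theta_k(x^k)$ for all sufficiently large $j$, establishing the first condition of \eqref{stopcond-ADMM}.

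Next I would tackle the dual side. The sequence $(\xi^*,\eta^*,\zeta^*)$ is feasible for \eqref{DprobQ}, so $b_k-\mathcal{A}_k^*\xi^*-\eta^*-\mathcal{B}^*\zeta^*=0$, which gives $\widetilde\eta^j=b_k-\mathcal{A}_k^*\xi^j-\mathcal{B}^*\zeta^j\to\eta^*$. Because $\widetilde g_2=g_2+(\underline\mu/2)\|\cdot\|^2$ is strongly convex, its conjugate $\widetilde g_2^*$ is finite-valued and (continuously) differentiable on all of $\mathbb{X}$, hence continuous at $\eta^*$. The ADMM update \eqref{zetaj} forces $\zeta^j\in{\rm dom}\,g_1^*$ for every $j$, so the standing hypothesis that $g_1^*$ is continuous relative to its domain yields $g_1^*(\zeta^j)\to g_1^*(\zeta^*)$. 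Combining these with the trivial continuity of $\tfrac12\|\xi\|^2$ in $\xi$, I obtain $\Xi_k(\xi^j,\widetilde\eta^j,\zeta^j)\to\Xi_k(\xi^*,\eta^*,\zeta^*)$. Strong convexity of \eqref{EprobQ} implies strong duality, so $\Xi_k(\xi^*,\eta^*,\zeta^*)=\Theta_k(z^*)$.

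Putting the two pieces together, $\Theta_k(\widetilde z^j)-\Xi_k(\xi^j,\widetilde\eta^j,\zeta^j)\to\Theta_k(z^*)-\Theta_k(z^*)=0$, while $\varepsilon_k\|\widetilde z^j-x^k\|^2\to\varepsilon_k\|z^*-x^k\|^2>0$ because $z^*\ne x^k$ and $\varepsilon_k>0$. Consequently, for all $j$ large enough, $\Theta_k(\widetilde z^j)-\Xi_k(\xi^j,\widetilde\eta^j,\zeta^j)\le\varepsilon_k\|\widetilde z^j-x^k\|^2$, which, together with $\Theta_k(\widetilde z^j)<\Theta_k(x^k)$, is exactly \eqref{stopcond-ADMM}. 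The main subtlety, and essentially the only place a hypothesis is really used, is the justification that $g_1^*$ is continuous along $\{\zeta^j\}$; all other pieces are either consequences of strong duality, strong convexity, or the explicit structure of $\widetilde g_2^*$.
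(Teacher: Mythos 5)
Your proposal is correct and follows essentially the same route as the paper: both arguments reduce \eqref{stopcond-ADMM} to the strict inequality $\Theta_k(z^*)-\Xi_k(\xi^*,\eta^*,\zeta^*)-\varepsilon_k\|z^*-x^k\|^2<0$ obtained from strong duality and $z^*\ne x^k$, and then pass to large $j$ using continuity of $\Theta_k$ relative to ${\rm dom}\,g$, of $\widetilde g_2^*$ (via strong convexity of $\widetilde g_2$), and of $g_1^*$ relative to its domain. Your explicit observation that the update \eqref{zetaj} keeps $\zeta^j\in{\rm dom}\,g_1^*$ is a small but welcome addition that the paper leaves implicit.
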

\begin{proof}
 Obviously, $b_k-\mathcal{A}_k^*\xi^*-\eta^*-\mathcal{B}^*\zeta^*=0$. Along with the assumption that $\{\xi^j,\eta^j,\zeta^j,z^j\}_{j\in\mathbb{N}}$ converges to $(\xi^*,\eta^*,\zeta^*,z^*)$, it follows that $\{\widetilde{\eta}^j\}_{j\in\mathbb{N}}$ with $\widetilde{\eta}^j:=b-\mathcal{A}^*_k\xi^j-\mathcal{B}^*\zeta^j$ converges to $\eta^*$. It is easy to verify that $\widetilde{z}^j= \Pi_{{\rm dom}\,g}(z^j)$ converges to $z^*$. Since $\Theta_k$ is strongly convex, it holds that $\Theta_k(z^*)-\Xi_k(\xi^*,\eta^*,\zeta^*)=0$ by the strong duality. Using the same arguments as those for Lemma \ref{well-defined}, we may assume that $z^*\neq x^k$, otherwise $x^k$ is a stationary point of \eqref{fg-fun}. Let $\phi(\xi,\eta,\zeta,z):=\Theta_k(z)-\Xi_k(\xi,\eta,\zeta)-{\varepsilon_k}\|z-x^k\|^2$. Then, $\phi(\xi^*,\eta^*,\zeta^*,z^*)<0$. By Assumption \ref{ass0} (ii), $\Theta_k$ is continuous relative to ${\rm dom}\,g$. 
 Note that $\widetilde{g}_2$ is strongly convex, so its conjugate $\widetilde{g}_2^*$ is continuously differentiable by \cite[Proposition 12.60]{RW98}. Along with the given assumption on $g_1^*$ and the expression of $\Xi_k$ in \eqref{DprobQ}, $\Xi_k$ is continuous relative to ${\rm dom}\,g_1^*$. This means that $\phi(\xi^j,\widetilde{\eta}^j,\zeta^j,\widetilde{z}^j)<0$ for all $j$ large enough. \qed
\end{proof}
\end{document}